\journalname{Submitted article} 
\newtheorem{procedure}{\bf Procedure}[section]
\newcommand*{\chl}{\mathrm{coshlog}^{\star}(\id)}
\newcommand*{\id}{\mathrm{id}}
\newcommand{\overbar}[1]{\mkern 1.5mu\overline{\mkern-1.5mu#1\mkern-1.5mu}\mkern 1.5mu}
\newcommand*{\meas}{\rho}
\newcommand*{\sh}
{{\,\begin{sideways}\begin{sideways}\begin{sideways}
$\tiny{\exists}$\end{sideways}\end{sideways}\end{sideways}}}
\newcommand*{\sJ}{\mathfrak{J}}
\newcommand*{\g}{\mathrm{g}}
\newcommand*{\R}{{\mathbb R}}
\newcommand*{\Ab}{{\mathbb A}}
\newcommand*{\rd}{{\mathrm{d}}}
\newcommand*{\pa}{\partial}
\newcommand*{\la}{\langle}
\newcommand*{\ra}{\rangle}
\newcommand*{\quas}{*}
\DeclareSymbolFont{bbold}{U}{bbold}{m}{n}
\DeclareSymbolFontAlphabet{\mathbbold}{bbold}
\begin{document}

\title{Algebraic Structures and Stochastic Differential Equations 
driven by L\'{e}vy processes}
\titlerunning{SDEs driven by L\'{e}vy processes}    
\author{Charles Curry, Kurusch Ebrahimi--Fard, 
Simon J.A.~Malham and Anke Wiese}

\institute{Charles Curry \at Maxwell Institute for Mathematical Sciences 
and School of Mathematical and Computer Sciences, 
Heriot-Watt University, Edinburgh EH14 4AS, UK.\\ 
\emph{Present address:} of Charles Curry \at Department of Mathematical Sciences, 
NTNU, 7491 Trondheim, Norway.\\  
\and Kurusch Ebrahimi--Fard \at Department of Mathematical Sciences, 
NTNU, 7491 Trondheim, Norway.\\
\and Simon J.A.~Malham \at 
Maxwell Institute for Mathematical Sciences
and School of Mathematical and Computer Sciences,
Heriot--Watt University, Edinburgh EH14 4AS, UK.\\
\and Anke Wiese \at
Maxwell Institute for Mathematical Sciences
and School of Mathematical and Computer Sciences,
Heriot--Watt University, Edinburgh EH14 4AS, UK.}

\date{5th November 2018}

\maketitle


\begin{abstract}
We construct an efficient integrator for stochastic differential systems 
driven by L\'{e}vy processes. An efficient integrator is a strong approximation 
that is more accurate than the corresponding stochastic Taylor approximation, 
to all orders and independent of the governing vector fields. This holds provided the 
driving processes possess moments of all orders and the vector fields
are sufficiently smooth. Moreover the efficient integrator in question
is optimal within a broad class of perturbations for half-integer
global root mean-square orders of convergence. We obtain these
results using the quasi-shuffle algebra of multiple iterated integrals 
of independent L\'{e}vy processes. 
\end{abstract}

\section{Introduction}
We consider the simulation of It\^o stochastic differential systems 
driven by independent L\'{e}vy processes possessing moments 
of all orders. Our goal is to derive a new strong numerical 
integration scheme for such systems that is efficient in the following sense.
The efficient scheme has a strong error at leading order that is always smaller 
than the strong error of the corresponding stochastic Taylor approximation. 
This is true for any such stochastic differential systems of any size, 
and for all orders of approximation. Moreover, for half-integer orders, our 
efficient integrator is optimal in the sense that 
its global root mean-square strong error realizes its smallest possible value
in a broad class of perturbations  
compared to the error of the corresponding stochastic Taylor approximation.
L\'{e}vy processes are a class of stochastic processes which are
continuous in probability and possess stationary increments, independent 
of the past. They are examples of stochastic processes with a well 
understood structure that incorporate jump discontinuities.
Due to the L\'{e}vy--It\^o decomposition of a L\'{e}vy process the systems 
we consider can be written in the form
\begin{equation*}
y_t=y_0+\sum_{i=0}^d\int_0^t V_i(y_{s})\mathrm{d}W^i_s 
+\sum_{i=d+1}^{\ell}\int_0^t V_i(y_{s_-})\mathrm{d}J^{i}_s,
\end{equation*}
for $y_t\in\R^N$ with $N\in\mathbb N$; see for example Applebaum~\cite{App}.
We assume all the governing autonomous vector fields 
$V_i\colon\mathbb{R}^N\to\mathbb{R}^N$ are smooth and in general non-commuting.
Note that by convention we suppose $W^0_t\equiv t$.
The system is thus driven by the independent Wiener processes
$W^i$ for $i=1,\ldots,d$, and the purely discontinuous martingales $J^{i}$
for $i=d+1,\ldots,\ell$ which are expressible in the form
\begin{equation*}
J^i_t=\int_0^t\int_{\mathbb{R}}v\overbar{Q}^i(\mathrm{d}v,\mathrm{d}s),
\end{equation*}
where 
$\overbar{Q}^i(\mathrm{d}v,\mathrm{d}s)\coloneqq Q^i(\mathrm{d}v,
\mathrm{d}s)-\meas^i(\mathrm{d}v)\mathrm{d}s$.
The $Q^i$ are Poisson measures on $\mathbb{R}\times\mathbb{R}_+$ 
with intensity measures $\meas^i(\mathrm{d}v) \mathrm{d}s$, 
where the $\meas^i$ are measures on $\mathbb{R}$ with $\meas^i(\{0\})=0$ 
and $\int_{\mathbb{R}} (1\wedge v^2) \meas^i(\mathrm{d}v)<\infty$. 
Intuitively, $Q^i(B,(a,b])$ counts the number of jumps of the 
process $J^i$ taking values in the set $B$ in the time interval $(a,b]$, 
whilst $\meas^i(B)$ measures the expected number of jumps per unit time 
the process $J^i$ accrues taking values in the set $B$. 
We emphasize that we assume that the $J^i$ possess moments of all orders. 

Let us outline our approach to the strong numerical simulation 
of L\'evy-driven stochastic differential systems via steps (1)---(4) below. 
As we do so we will present the new results we prove in this paper and put them 
into context. We will also describe the algebraic approach we employ 
to prove the results and its usefulness.

\emph{Step (1).} We begin by studying strong integration schemes for stochastic differential systems
such as those above. Such schemes are typically derived from the stochastic Taylor expansion 
for the solution given by  
\begin{equation*}
y_t=\sum_{w\in A^*} I_w(t)\bigl[\tilde{V}_w(y_0)].
\end{equation*}
A derivation can be found in Platen \& Bruti-Liberati \cite{PlaBl}; also see
Platen \cite{Pla80,Pla82} and Platen \& Wagner \cite{PlaWag}. It is 
produced via Picard iteration using the chain rule for the solution process $y_t$.
We show this explicitly at the beginning of \textsection~\ref{sec:sepST}.
In the expansion the set $A^*$ consists of all the multi-indices (or words) that
can be constructed from the set of letters $A\coloneqq\{0,1,\ldots,d,d+1,\ldots,\ell\}$.
The terms $I_w(t)\bigl[\tilde{V}_w(y_0)]$ represent iterated integrals 
of the integrands $\tilde{V}_w(y_0)$ which involve compositions of the vector fields 
(as first order partial differential and difference operators) 
governing the L\'evy-driven stochastic differential system.
See the beginning of \textsection~\ref{sec:sepST} and in particular
Theorem~\ref{th:ST} for a more precise prescription.
For sufficiently smooth vector fields, integration schemes 
of arbitrary strong order of convergence may be constructed by 
truncating the series expansion to an appropriate number of terms. 
Platen \& Bruti-Liberati~\cite{PlaBl} show which terms must be retained to 
obtain a strong integration scheme with a given strong order of convergence. 
We assume hereafter the governing vector fields are sufficiently smooth 
for the stochastic Taylor expansion to exist. Our first new result in
this paper is Theorem~\ref{mark taylor} in \textsection~\ref{sec:sepST} in which
we show that the terms in the stochastic Taylor expansion above can be written 
in separated form as $I_w(t) \tilde{V}_w(y_0)$. 
We observe that the time-dependent stochastic information
which is encoded in the now integrand-free multiple integrals $I_w$
are separated as scalar multiplicative factors of the structure 
information which is encoded in the terms $\tilde{V}_w(y_0)$ which
involve compositions of the vector fields as first order partial
differential operators only. Indeed the separated expansion is
essentially achieved by Taylor expanding all the shift operators 
associated with the purely discontinuous terms in the original
stochastic Taylor expansion. Note that the result of this is
that the sum is over an expanded set of multi-indices/words 
$\mathbb A^*$ constructed from the alphabet $\mathbb A$ which 
contains $A$. The additional letters in $\mathbb A$ are 
compensated power brackets associated with the purely 
discontinuous terms. See \textsection~\ref{sec:sepST} for more precise
details as well as Curry, Ebrahimi--Fard, Malham \& Wiese~\cite{CEfMW}.
The advantages of this separated form are: (i) the simulation of
the stochastic components $I_w$ of the system, for which targeted generic
simulation algorithms can be developed, is now separated from
the inherent structure components; (ii) it provides the 
context/basis to develop the more general strong integrators
we are about to discuss next and (iii) we can take advantage
of well developed algebraic structures which underlie such expansions
as we shall see.

As is well known the order of a strong numerical scheme is determined by the
set of multiple integrals $I_w$ retained in any approximation. 
We also emphasize here that in any Monte--Carlo simulation, the bulk of the 
computational effort goes into simulating the higher order
multiple integrals included, in particular those indexed by
words of length two or more involving distinct non-zero 
letters---though see Malham \& Wiese~\cite{MW2014} for new efficient
methods for simulating the L\'evy area.
Once this set of multiple integrals is fixed (with its associated
computational burden) the order of convergence is fixed, i.e.\/ 
the power of the stepsize in the leading order term of the
strong error. More accurate schemes correspond to those
that generate smaller coefficients in the leading order term.

\emph{Step (2).} We then discuss how strong integrators based on truncations of the 
stochastic Taylor expansion are just one example of 
a more general class of integrators we shall call map-truncate-invert schemes. 
General map-truncate-invert schemes were first introduced in Malham \& Wiese \cite{MalWie}
for Stratonovich drift-diffusion equations and are constructed as follows.
We start with the flowmap $\varphi_t\colon y_0\mapsto y_t$ lying in $\mathrm{Diff}(\mathbb{R}^N)$
which maps the initial data $y_0$ to the solution $y_t$ at time $t>0$. Here $\mathrm{Diff}(\mathbb{R}^N)$
denotes the space of diffeomorphisms on $\mathbb{R}^N$. From our discussion above,
the separated stochastic Taylor expansion for the flowmap has the form 
\begin{equation*}
\varphi_t=\sum_{w\in\mathbb A^*} I_w(t) \tilde{V}_w.
\end{equation*}
This separated form is key and crucial to all our subsequent developments herein. 
Now, given any invertible map 
$f\colon\mathrm{Diff}(\mathbb{R}^N)\rightarrow\mathrm{Diff}(\mathbb{R}^N)$, 
the corresponding map-truncate-invert scheme is constructed thus: 
we expand the series $f(\varphi_t)$, truncate according to a chosen grading 
and then apply $f^{-1}$. See \textsection~\ref{sec:convalg} for more details, 
in particular how to construct such integrators in the L\'evy-driven context. 
The case $f=\log$ corresponds to the exponential Lie series integrator of 
Castell--Gaines for Stratonovich drift-diffusion equations; see 
Castell \& Gaines \cite{CasGai95,CasGai96}.
In practice the $f=\log$ case can be implemented as follows. 
The series $\psi_t=\log\varphi_t$ lies in the Lie algebra generated
by the underlying vector fields with the scalar coefficients given by
combinations of multiple Stratonovich integrals. Any truncation $\hat\psi_t$
with the multiple integrals replaced by suitable approximate samples is
also a vector field. The corresponding solution approximation is 
$\hat y_t=\exp\hat\psi_t\circ y_0$. This can be generated by solving the 
ordinary differential system $u'=\hat\psi_t\circ u$ for $u=u(\tau)$ 
on $\tau\in[0,1]$ with $u(0)=y_0$. Typically this is achieved using 
a suitable numerical scheme. The approximation $u(1)$ represents $\hat y_t$.
Generally, apart from the low order cases stipulated below, 
the Castell--Gaines method is not asymptotically efficient. 
This was shown by Malham \& Wiese \cite{MalWie} who consequently 
considered the problem of designing so-called efficient integrators for 
Stratonovich drift-diffusion equations driven by multiple independent Wiener processes. 
Here, efficient integrators of a given order have leading order error 
which is always less than that for the corresponding stochastic Taylor scheme, 
independent of the governing vector fields. Malham \& Wiese \cite{MalWie} then 
proved that an integration scheme based on the map $f=\mathrm{sinhlog}$ 
is efficient. These results were obtained in the absence of a drift term 
and extended to incorporate drift in 
Ebrahimi--Fard, Lundervold, Malham, Munthe--Kaas \& Wiese \cite{EfLMMkW}.
It is the generalization of this latter result to L\'evy-driven systems that
we consider presently.

\emph{Step (3).} We temporarily revert our discussion to stochastic Taylor approximations. 
We prove a new result for such schemes which is an important step towards our main result. 
This concerns the relative accuracy of strong numerical methods for L\'evy-driven  
stochastic differential systems generated by truncating the stochastic Taylor
expansion according to word length or mean-square grading. The length of any word
$w\in\mathbb A$ is the number of letters it contains while its mean-square grading
is similar but zero letters are counted twice---consistent with the $L^2$-norm
of $I_w$. Suppose $R_t^{\mathrm{wl}}(y_0)$ and $R_t^{\mathrm{ms}}(y_0)$ 
are the remainders generated by truncating the separated stochastic Taylor expansion 
for $y_t$ by keeping all words of length and mean-square grade 
less than or equal to $n$, respectively. Then in \textsection~\ref{sec:wlvsms} 
(see Theorem~\ref{th:msvswo}) we prove that
at leading order for all $n$ we have
\begin{equation*}
\bigl\|R_t^{\mathrm{wl}}(y_0)\bigr\|_{L^2}^2\leqslant\bigl\|R_t^{\mathrm{ms}}(y_0)\bigr\|_{L^2}^2.
\end{equation*}   
There is computational effort involved in simulating any such word 
length stochastic Taylor approximation due to the 
extra iterated integrals included. However those that are included 
are indexed by words which include at least
one drift (zero) letter, i.e.~integrating with respect to ${\mathrm d}t$, or 
repeated L\'evy (non-zero) letters 
which are computationally less burdensome to compute 
than those indexed by words containing completely distinct non-zero letters.
We assess the trade off of accuracy versus computational effort once we introduce
our main result next.

\emph{Step (4).} Lastly, we present our new antisymmetric sign reverse integrator
and prove that it is an efficient integrator for L\'evy-driven systems; 
see Theorem~\ref{main result}(a).
In addition, we prove that it is optimal at half-integer global 
root mean-square orders of convergence; see Theorem~\ref{main result}(b). 
It is an example of a map-truncate-invert scheme and the principle ideas 
underlying its construction and properties are as follows. 
To begin with we consider the result of applying the first two
steps of the map-truncate-invert scheme as described above, i.e.\/
considering a function $f(\varphi_t)$ of the flowmap and then truncating
the result according to a given grading up to and including terms of
grade $n$. The pre-remainder is the remainder after truncating $f(\varphi_t)$ 
in this way. We then show that for word length grading, at leading order, 
the pre-remainder is equal to the true remainder, 
i.e.\/ equal to the remainder after applying $f^{-1}$ to the truncated $f(\varphi_t)$. 
Herein lies the room-for-manouver of which we take advantage. Naturally
the map-truncate-invert scheme must emulate the scheme based on the corresponding 
stochastic Taylor expansion truncated according to the same grading. 
The question is thus, is the pre-remainder (and thus remainder) associated with the 
map-truncate-invert scheme at hand less than the remainder associated with
the corresponding stochastic Taylor approximation to leading order in the 
mean-square measure? The answer is ``yes'' for the 
antisymmetric sign reverse integrator. This integrator is constructed 
by taking half the difference of the stochastic Taylor expansion and
the stochastic Taylor expansion with all the words indexing the multiple 
integrals reversed in order and signed according to the length of the words.
Hence the pre-remainder and thus remainder are 
\begin{equation*}
R_t^{\mathrm{ASRI}}(y_0)=\sum_{|w|=n+1}\tfrac12\bigl(I_w-I_{S(w)}\bigr)V_w(y_0).
\end{equation*}
Here $|w|$ represents the length of the word $w$ and 
$S$ is the signed reversal map defined as 
$S\colon a_1a_2\ldots a_{n}\mapsto (-1)^n\,a_{n}a_{n-1}\ldots a_1$
for any word consisting of letters $a_1$, $a_2$, \ldots, $a_{n}$ from $\mathbb A$.
We naturally assume $I_{-w}\equiv -I_w$. Our \emph{principal main result} is thus 
\begin{equation*}
\bigl\|R_t^{\mathrm{ASRI}}(y_0)\bigr\|_{L^2}^2\leqslant\bigl\|R_t^{\mathrm{wl}}(y_0)\bigr\|_{L^2}^2,
\end{equation*}
for all $y_0$, all L\'evy-driven systems, to all orders $n$. Using our
result above this means the antisymmetric sign reverse integrator is more
accurate than the corresponding stochastic Taylor approximation truncated 
according to mean-square grading. Furthermore we show that when $n$ is odd,
the inequality just above is optimal in the following sense. Suppose we 
perturb the antisymmetric sign reverse integrator by a general class of 
perturbations---see \textsection~\ref{sec:ASRI} for more details. 
Then when $n$ is odd we show that the difference between the 
two mean-square error measures for $R_t^{\mathrm{wl}}(y_0)$ and $R_t^{\mathrm{ASRI}}(y_0)$ 
above are minimized when the perturbation is zero. One further technical
practical aspect comes into play. In the Castell--Gaines case we 
compute $f^{-1}=\exp$ by solving an ordinary differential system.
If all the underlying vector fields are linear $f^{-1}$ may be computed 
as a matrix-valued compositional inverse function; see Malham \& Wiese \cite{MalWie}.
However we cannot do this in general for map-truncate-invert schemes.
To circumvent this issue we have developed direct-map-truncate-invert schemes
with a special case being the direct antisymmetric sign reverse integrator.
In direct-map-truncate-invert schemes we utilize the observation above that
map-truncate-invert schemes naturally emulate the scheme based on the corresponding 
stochastic Taylor approximation to the same order, but have a different remainder 
at leading order as different terms at leading order in the remainder are 
included in the integrator.
Thus in order to simulate a direct-map-truncate-invert scheme we simulate the
corresponding stochastic Taylor approximation to the same order as well as the
terms that lie in the difference between the stochastic Taylor approximation remainder
and map-truncate-invert approximation remainder at leading order. These are 
additional terms indexed by words of length $n+1$, the terms concerned will not involve
iterated integrals involving completely distinct letters (those would belong to 
the next order integrator). Our discussion above in the last paragraph on the 
computational burden involved thus applies here. Indeed we round off our results
herein with global convergence results in \textsection~\ref{sec:glob} and 
numerical simulations demonstrating our conclusions in \textsection~\ref{sec:NS}.

We remark that several challenges are encountered upon generalising from drift-diffusions 
to L\'evy-driven equations. The stochastic Taylor expansions derived 
by Platen \& Bruti--Liberati~\cite{PlaBl} do not display the separation 
of geometric and stochastic required for the deployment of map-truncate-invert schemes. 
Furthermore, due to the discontinuities of the driving paths, 
it is not possible to use Stratonovich integrals to obtain the usual 
product rule for iterated integrals. Algebraically, 
this means that we must use the quasi-shuffle algebra of 
iterated integrals with respect to L\'evy processes 
(see Curry, Ebrahimi--Fard, Malham \& Wiese~\cite{CEfMW}) 
rather than the shuffle algebra used in the derivation 
of map-truncate-invert schemes for drift-diffusions.
Further, it is rather intriguing that the efficient integrator 
in the L\'evy-driven (quasi-shuffle) context herein is
the antisymmetric sign reverse integrator, i.e.\/ half the 
difference of the identity and sign-reverse endomorphisms. 
This coincides with efficient integrator in the 
drift-diffusion (shuffle) case. In the shuffle case the 
sign-reverse endomorphism coincides with the antipode. 
However in the general quasi-shuffle case this is no longer true. 

We prove our key results above in the context of combinatorial algebras,
in particular the quasi-shuffle Hopf algebra and the convolutional
algebra of endomorphisms defined on the quasi-shuffle Hopf algebra. 
It is this abstraction that not only allows us to prove our main results 
but also explicitly identify the `Efficient Integrator'.   
These algebraic structures underlie the separated stochastic Taylor
expansion for the flowmap associated with L\'evy-driven stochastic differential
systems and all the manipulations we apply to the separated stochastic Taylor expansion
when we construct map-truncate-invert schemes and assess their properties. 
The separated stochastic Taylor expansion is the starting point for our analysis. Key for its derivation
is the repeated application of It{\^o}'s formula. The stochastic information of our L\'{e}vy-driven
system is thus given by the Wiener processes and by the purely discontinuous martingales in the 
L{\'e}vy--It\^o decomposition of the  driving L{\'e}vy processes 
as well as by their quadratic variation processes 
and power brackets. We encode this stochastic information 
in our augmented alphabet  $\mathbb A$: 
each letter in $\mathbb A$ is associated with a Wiener processes 
or a purely discontinous processes driving 
the stochastic differential equation or with a compensated power bracket generated from these. 
For technical reasons it is opportune to use compensated power brackets rather than power brackets. 
Once we have constructed the alphabet, we can associate words constructed 
from this alphabet with the corresponding iterated integral. For example,
for two Wiener processes $W^1$ and $W^2$, the word $\omega=12$ is associated with the iterated integral 
$I_{12}=\int_{0\leqslant\tau_1\leqslant\tau_2\leqslant t}\mathrm{d}W^1_{\tau_1}\mathrm{d}W^2_{\tau_2}$.
For a Wiener process $W^1$ and purely discontinuous martingale $J^i$ 
with $i\in\{d+1,\ldots,\ell\}$, the word $\omega=1i$ is associated with the iterated integral 
$I_{1i}=\int_{0\leqslant\tau_1\leqslant\tau_2\leqslant t}\mathrm{d}W^1_{\tau_1}\mathrm{d}J^i_{\tau_2}$.
See Curry \textit{et al.\/} \cite{CEfMW} as well as 
\textsection~\ref{sec:sepST} for more details on the necessity 
and construction of the augmented alphabet $\mathbb A$. 
Associated with each member of the alphabet $\mathbb A$ 
constructed from the stochastic information is a corresponding 
partial differential operator. With each word we associate the partial differential operator 
obtained by compositions of the differential operators associated 
with the letters constituting the word (see \textsection~\ref{sec:sepST} for more details). 
Our first main result is that the separated stochastic Taylor expansion for the flowmap above
is the sum $\varphi_t=\sum_{w\in\mathbb A^*} I_w(t) \tilde{V}_w$ 
over the real multiplication of the multiple integrals $I_w$ and differential operators $\tilde{V}_w$
which are indexed by all words/multi-indices that can be constructed from
the augmented alphabet $\mathbb A$ containing the original alphabet $A$. 
Now consider functions 
$f\colon\mathrm{Diff}(\mathbb{R}^N)\rightarrow\mathrm{Diff}(\mathbb{R}^N)$ of the flowmap which have
simple power series expansions such as $f(\varphi)=\sum_{k\geqslant0}c_k\varphi^k$,
for some real coefficients $c_k$. Applying such a function to the flowmap
generates linear combinations of products of the multiple integrals 
and linear combinations of compositions of the differential operators, 
i.e.\/ operations in the respective algebras generated by the multiple integrals and 
by the differential operators. 
The algebra generated by the multiple integrals can be identified with the quasi-shuffle
algebra of the words constructed from $\mathbb{A}^*$, 
again see Curry \textit{et al.\/} \cite{CEfMW} as well as 
\textsection~\ref{sec:sepST}. 
The algebra generated by the composition of differential operators can be 
identified with a concatenation algebra of the words. Specifically, if we strip
away the $I$'s and $\tilde{V}$'s we can represent the stochastic Taylor
series for the flowmap by 
\begin{equation*}
\sum_{w\in\mathbb{A}^*} w\otimes w.
\end{equation*}
This lies in a product algebra in which the bilinear product of two terms is
given by
\begin{equation*}
(u\otimes x)(v\otimes y)=(u\ast v)\otimes (xy),
\end{equation*}
for any words $u,v,x,y$ constructed from $\mathbb{A}^*$.
On the left $u\ast v$ is the quasi-shuffle product of 
$u$ and $v$ representing the real product $I_uI_v$.
On the right $xy$ is the concatentation of $x$ and $y$ 
representing the composition of the differential operators $\tilde{V}_x$
and $\tilde{V}_y$. Substituting the abstract series representation for
the flowmap above into $f(\varphi)$, where $f$ has the power series
expansion with real coefficients $c_k$ indicated above,
by direct computation and rearrangement
we have (see for example Ebrahimi--Fard, \textit{et al.\/} \cite{EfLMMkW}),
\begin{equation*}
f(\varphi)=\sum_{w\in\mathbb{A}^*} F(w)\otimes w,
\end{equation*}
where
\begin{equation*}
F(w)=\sum_{k=0}^{|w|}c_k\sum_{\substack{u_1,\ldots,u_k\in\Ab^*\\u_1\ldots u_k=w}}
u_1\ast\cdots\ast u_k.
\end{equation*} 
Here $F$ is a linear endomorphism on words. 
This last calculation shows us that we can encode functions of the flowmap 
$f(\varphi)$ by endomorphisms $F$ on the quasi-shuffle algebra of words. 
A fundamental building block of the endomorphism $F$ is the action 
\begin{equation*}
w \mapsto \sum_{\substack{u_1,\ldots,u_k\in\Ab^*\\u_1\ldots u_k=w}}
u_1\ast\cdots\ast u_k,
\end{equation*}
i.e. the sum of the quasi-shuffle product $u_1\ast\cdots\ast u_k$ of any 
possible separation of $w$ into any subwords $u_1,\, \ldots,\, u_k$. 
This object is the $k$-th convolution power of the identity endomorphism $\id^{\star k}$ 
acting on $w$. We can
thus abstract the context one level further, and this abstraction is natural and key.
The function $f$ of the flowmap  $f(\varphi)$ can be represented by 
$F(\id)=\sum_{k\geqslant0}c_k\id^{\star k}$, 
where the action of 
$\id^{\star k}$ is $\id^{\star k}(w)=\sum_{u_1\ldots u_k=w}u_1\ast\cdots\ast u_k$. 
Once the stochastic system under consideration 
is fixed, i.e. we have prescribed the driving L\'evy processes and governing vector fields, 
the words are fixed. Different integrators are distinguished by their actions on
those words and are represented by endomorphisms acting on them. 
The stochastic Taylor expansion itself corresponds to the case 
where $F$ is the identity `$\id$' endomorphism. 

We thus utilize two consecutive levels of abstraction herein. The first
abstraction is to the product algebra with quasi-shuffles on one side and 
concatenations on the other. The second abstraction is to the algebra of endomorphisms 
that act on the Hopf quasi-shuffle algebra.
On this algebra we can define an inner product representing the $L^2$ inner product
on stochastic processes. Our abstraction encapsules the information required 
for our problem under consideration, and we are naturally able to 
represent map-truncate-invert approximations succinctly and to analyse their
properties independent of the underlying system, data and truncation order.
This enables us to establish the general results we present.

L\'{e}vy processes have many applications. Most notably though they appear 
in mathematical finance in the construction of models going beyond the 
Black--Scholes--Merton model to incorporate discontinuities in stock prices. 
See for example Cont \& Tankov~\cite{ConTan} and Barndorff--Nielsen, Mikosch 
\& Resnick \cite{BnMR} and the references therein. There are also applications 
to physical sciences, see for instance 
Barndorff--Nielsen, Mikosch \& Resnick~\cite{BnMR}. 
Work on Euler type methods for simulating L\'evy-driven 
stochastic differential systems including
how to incorporate the jump processes can for example be found in Jacod~\cite{Jacod},
Fournier~\cite{Fournier}. 
See Higham \& Kloeden~\cite{HK2005,HK2006} for implicit methods and
Protter \& Talay~\cite{PT} for weak approximations.
The monograph by Platen \& Bruti-Liberati \cite{PlaBl} 
provides a comprehensive introduction 
to numerical simulation schemes for stochastic differential systems 
driven by L\'{e}vy processes and includes financial applications.
Barski~\cite{Barski} also develops general high order schemes for L\'evy-driven
systems. Pathwise integrals of L\'{e}vy processes have been constructed 
in the framework of rough paths by Friz \& Shekhar \cite{FriShe}. 

The problem of minimizing the coefficient in the leading error term in 
stochastic Taylor integrators
was considered by Clark \cite{Cla} and Newton \cite{New86,New91} 
for drift-diffusion equations driven by a single Wiener process; also see
Kloeden \& Platen~\cite[Section~13.4]{KloPla}.
They derived integration schemes that are asymptotically efficient 
in the sense that the coefficient of the leading order error is minimal 
among all schemes. For Stratonovich Wiener-driven stochastic differential equations, 
Castell \& Gaines \cite{CasGai95,CasGai96} constructed integration schemes 
using the Chen-Strichartz exponential Lie series expansion (see Strichartz \cite{Str}).
In the strong order one case for one driving Wiener process, and the strong order
one-half case for two or more driving Wiener processes, their schemes 
are asymptotically efficient in the sense of Clark and Newton.

The quasi-shuffle algebra of iterated stochastic integrals was considered
by Gaines \cite{Gai} for the case of 
multiple iterated integrals of Wiener processes back in 1994. A few years later 
Li \& Liu \cite{LiLiu} 
considered multiple iterated integrals of Wiener processes and standard 
Poisson processes. More recently Curry \textit{et al.\/} \cite{CEfMW} proved 
that the algebra of multiple integrals of a minimal family of semimartingales 
is isomorphic to the combinatorial quasi-shuffle algebra of words. A set of L\'evy
processes represents one example. The quasi-shuffle algebra is an extension of the 
shuffle algebra. Its historical development is of noteworthy interest. 
Indeed, it was introduced abstractly in a 1979 paper by Newman and Radford \cite{NewRad}, 
where the authors attempt to endow the free coalgebra 
over an associative algebra with a Hopf algebra structure. 
A quarter of a century later, and independently from 
Gaines, in a sequence of papers, Hudson, Parthasarathy and collaborators presented 
a combinatorial product of iterated quantum stochastic 
integrals which is equivalent to the Gaines quasi-shuffle product;
see Hudson \& Parthasarathy~\cite{HP1,HP2,HP3} and 
Cohen, Eyre \& Hudson~\cite{Beasley}.
This product was coined the sticky-shuffle, see Hudson~\cite{Hudson_2}, 
and was studied from a Hopf algebra viewpoint in Hudson~\cite{Hudson_1}. 
Independently of these developments, 
Hoffman~\cite{Hof} comprehensively studied the quasi-shuffle product 
using a Hopf algebraic framework. 
The significance of the shuffle algebra was cemented 
in the work of Eilenberg \& Maclane~\cite{EiMac}, Sch\"{u}tzenberger \cite{Sch} 
and Chen \cite{Che}. See Reutenauer \cite{Reu} for more details. 
One advantage of abstracting to the quasi-shuffle algebra is that we can immediately 
identify the minimum set of iterated integrals that need to be simulated to implement 
a given accurate strong scheme. This optimizes the total computation time which is dominated
by the strong simulation of the iterated integrals.
Indeed, Radford~\cite{Rad} proved that the shuffle algebra is generated by Lyndon words. 
This was extended to the quasi-shuffle algebra by Newman \& Radford~\cite{NewRad} and
independently later by Hoffman~\cite{Hof}. 
Gaines \cite{Gai} established this independently for the case of
Wiener processes, as did Li \& Liu \cite{LiLiu} for the case of
Wiener processes and standard Poisson processes, while Sussmann~\cite{Suss} had 
considered a Hall basis. Hence the set of iterated integrals we need to
simulate at any given order are identified by Lyndon words. 

The study and application of such structures 
in systems, control and stochastic processes
can be traced back to the work of Chen~\cite{Che},
Magnus~\cite{Magnus}, Kunita~\cite{Kunita}, Fliess~\cite{Fliess}, 
Azencott~\cite{Azencott}, Sussmann~\cite{Suss}, 
Strichartz~\cite{Str}, Ben Arous~\cite{Ben Arous},
Grossman \& Larson~\cite{GL:realzn}, 
Reutenauer~\cite{Reu},
Castell~\cite{Castell}, Gaines~\cite{Gai}, 
Lyons~\cite{Lyons}, Li \& Liu~\cite{LiLiu}, Burrage \& Burrage~\cite{BB},
Kawski~\cite{Kawski}, Baudoin~\cite{Baudoin} and Lyons \& Victoir~\cite{LV}.
Many of these authors focus on the exponential Lie series and its applications.
A particular impetus of the study of such stuctures arose in the late nineties
when Brouder~\cite{Brouder} demonstrated the connection between 
the Hopf algebra used by Connes \& Kreimer~\cite{CK} for the renormalization problem in
perturbative quantum field theory and the Butcher group used to study Runge--Kutta methods 
by Butcher~\cite{Butcher} in the late sixties and early seventies; 
see Hairer, Lubich \& Wanner~\cite{HLW}. 
Hopf algebraic structures are now a natural lexicon in the study
of (to name a few): (i) numerical methods for deterministic systems, 
such as Runge--Kutta methods and geometric and 
symplectic integrators---see for example Munthe--Kaas \& Wright~\cite{M-KW},
Lundervold \& Munthe--Kaas~\cite{LM-K} and McLachlan, Modin, Munthe--Kaas
\& Verdier~\cite{MMMV}; 
(ii) approximations for stochastic differential equations---see
for example Castell \& Gaines~\cite{CasGai95,CasGai96}, Malham \& Wiese~\cite{MalWie},
Ebrahimi--Fard, \textit{et al.\/} \cite{EfLMMkW}, 
Ebrahimi--Fard, Malham, Patras \& Wiese~\cite{EfMPW1,EfMPW2}; 
(iii) rough paths---see for example Hairer \& Kelly~\cite{HK} and Gubinelli \& Tindel~\cite{GT}
and (iv) regularity structures for stochastic partial differential equations---see
for example Gubinelli~\cite{G:controlledRP} and Hairer~\cite{H:regstructures}.

In summary in this paper, what is new, representing our \emph{main results}, is that we:
\begin{enumerate}
\item Show the stochastic Taylor series expansion for the flowmap
can be written in separated form, i.e.\/ as a series of terms,
each of which is decomposable into a multiple It\^o integral 
and a composition of associated differential operators (see \textsection~\ref{sec:sepST};
in particular Theorem~\ref{mark taylor}); 
\item Describe the class of map-truncate-invert schemes for 
L\'{e}vy-driven equations and give an algebraic framework for 
encoding and comparing such schemes. These results 
generalize those in Malham \& Wiese \cite{MalWie} 
and Ebrahimi--Fard \textit{et al.\/} \cite{EfLMMkW} 
(see \textsection~\ref{sec:convalg}, Procedure~\ref{proc});
\item Prove truncations according to the word length grading give more 
accurate schemes than approximations of the same order of convergence obtained by 
truncations according to the mean-square grading (see \textsection~\ref{sec:wlvsms};
in particular Theorem~\ref{th:msvswo});
\item Show how to compute map-truncate-invert schemes in practice and 
how to deal with the inversion stage. We call these direct map-truncate-invert schemes
(see~\textsection~\ref{sec:ASRI}, Corollary~\ref{inv corollary});
\item Introduce the antisymmetric sign reverse integrator, a new integration scheme for 
L\'{e}vy-driven equations, represented as half the difference of the 
identity and sign reverse endomorphisms on the vector space generated by words indexing
multiple integrals. This scheme is efficient in the sense that its
leading order mean-square error is less than that of the corresponding
stochastic Taylor scheme, independent of the governing vector fields 
(see \textsection~\ref{sec:ASRI}; in particular Theorem~\ref{main result}(a));
\item Prove the antisymmetric sign reverse integrator is optimal in the sense outlined 
above at half-integer global root mean-square orders of convergence 
(see Theorem~\ref{main result}(b)).
\end{enumerate}
We round off this paper by establishing global convergence results 
from local error estimates in \textsection~\ref{sec:glob} and providing
some explicit antisymmetric sign reverse integrators 
as well as numerical experiments demonstrating our results in \textsection~\ref{sec:NS}.
Further numerical results and details can be found in the electronic
supplementary material, including an introduction to the role
of quasi-shuffle algebras in the theory of stochastic differential equations.

\section{Separated stochastic Taylor expansions}\label{sec:sepST}
Our setting is a complete, filtered probability space 
$\bigl(\Omega,\mathcal{F},(\mathcal{F}_t)_{t\geq0},P\bigr)$, 
assumed to satisfy the usual hypotheses---see Protter \cite[p.3]{Pro}. 
For the It\^o stochastic differential system driven by L\'evy processes
presented in the introduction, we assume the initial data 
$y_0\in L^2(\Omega,\mathcal{F}_0,P)$. 

A stochastic Taylor expansion is an expression for the flowmap as a 
sum of iterated integrals. We write down the stochastic Taylor expansion 
for L\'{e}vy-driven equations and show how and when it can be written in separated form. 
Recall the flowmap is defined as the map $\varphi_{s,t}\colon y_s\mapsto y_t$. 
It acts on sufficiently smooth functions $f\colon\mathbb R^N\to\mathbb R^N$ 
as the pullback $\varphi_{s,t}(f(y_s)) \coloneqq f(y_t)$. 
We set $\varphi_t\coloneqq\varphi_{0,t}$. It\^{o}'s formula 
(see for example Applebaum~\cite[p.~203]{App} or Protter~\cite[p.~71]{Pro}) implies
\begin{equation*}
f(y_t) = f(y_0) + \sum_{i=0}^d \int_0^t \tilde{V}_i\circ f(y_s) 
\,\mathrm{d}W^i_s + \sum_{i=d+1}^\ell \int_0^t \int_{\mathbb{R}} 
(\tilde{V}_{i}\circ f)(y_{s_-},v) \,\overbar{Q}^i(\mathrm{d}v,\mathrm{d}s),
\end{equation*}
where for $i=1,\ldots,\ell$, the $\tilde{V}_i$ are operators defined as follows
\begin{equation*}
\tilde{V}_i\circ f\coloneqq
\begin{dcases}
(V_i \cdot \nabla) f, & \mathrm{if}\quad i=1,\ldots, d, \\
f\bigl(\,\cdot\, + v V_{i}(\,\cdot\,)\bigr) - f(\,\cdot\,),  & \mathrm{if} \quad i = d+1,\ldots,\ell,
\end{dcases}
\end{equation*}
and $\tilde{V}_0$ is the operator defined by 
\begin{equation*}
\tilde{V}_0\circ f\coloneqq (V_0 \cdot \nabla)f 
+ \frac{1}{2} \sum_{i=1}^d \sum_{j,k=1}^N V_i^j V_i^k \partial_{x_j}\partial_{x_k} f 
+ \sum_{i=d+1}^\ell \int_{\mathbb{R}} \big[ (\tilde{V}_i\circ f)(\cdot,v) 
- v(V_i \cdot \nabla)f \big] \,\meas^i(\mathrm{d}v).
\end{equation*}
Note for $i=d+1,\ldots,\ell$, the $\tilde{V}_i$ introduce an 
additional dependence on a real parameter $v$. The stochastic Taylor expansion 
is derived by expanding the integrands in the L\'{e}vy-driven equation using It\^{o}'s formula. 
This procedure is repeated iteratively, where the iterations are encoded as follows. 
Let $A$ be the alphabet $A \coloneqq \{0,1,\ldots,\ell\}$. For any such alphabet,
we use $A^*$ to denote the free monoid over $A$---the set of words $w=a_1\ldots a_m$ 
constructed from letters $a_i\in A$. 
We write $\mathbbold{1}$ for the empty word. For a given word $w=a_1\ldots a_m$ 
define the operator $\tilde{V}_w \coloneqq \tilde{V}_{a_1} \circ \cdots \circ \tilde{V}_{a_m}$. 
Let $\mathfrak s(w)$ be the number of letters of $w$ from the subset $\{d+1,\ldots,\ell\}\subset A$. 
For a given integrand $g(t,v)\colon\mathbb{R}_+\times \mathbb{R}^{\mathfrak s(w)} \rightarrow \mathbb{R}^N$, 
we define the iterated integrals $I_w(t)[g]$ inductively as follows. 
We write $I_{\mathbbold{1}}(t)[g] \coloneqq g(t)$, and
\begin{equation*}
I_w(t)[g] \coloneqq 
\begin{dcases}
\int_0^t I_{a_1\ldots a_{m-1}}(s)[g] \,\mathrm{d}W^{a_m}_s, & \mathrm{if}\quad a_m = 0,1,\ldots,d, \\
\int_0^t \int_{\mathbb{R}} I_{a_1\ldots a_{m-1}}(s_-) [g(\cdot,v)] \,\overbar{Q}^{a_m}(\mathrm{d}v,\mathrm{d}s), 
& \mathrm{if}\quad a_m = d+1,\ldots,\ell.
\end{dcases}
\end{equation*}
Iteratively applying the chain rule generates the following 
(see Platen \& Bruti-Liberati \cite{PlaBl}).
\begin{theorem}[Stochastic Taylor expansion]\label{th:ST}
For a L\'{e}vy-driven equation the action of the flowmap $\varphi_t$
on sufficiently smooth functions $f\colon\mathbb R^N\to\mathbb R^N$ 
can be expanded as follows 
\begin{equation*}
\varphi_t\circ f=\sum_{w\in A^*} I_w(t)\bigl[\tilde{V}_w\circ f\bigr].
\end{equation*}
\end{theorem}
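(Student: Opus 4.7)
The plan is to prove the identity by iterated application of the form of It\^o's formula stated just before the theorem, i.e., Picard iteration on the defining SDE for $y_t$. The statement should be read in the standard Platen--Bruti-Liberati sense of a stochastic Taylor series whose truncations at word length $n$ differ from $\varphi_t\circ f$ by a remainder consisting of length-$(n+1)$ iterated integrals, with the integrands in the summands on the right-hand side understood as the coefficient functions $\tilde V_w\circ f$ evaluated at the initial state $y_0$.

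First, I would apply It\^o's formula to $f(y_t)$. This yields $f(y_0)$, which is exactly the $w=\mathbbold{1}$ summand $I_{\mathbbold{1}}(t)\bigl[\tilde V_{\mathbbold{1}}\circ f\bigr]$, together with $\ell+1$ single stochastic integrals whose integrands are the first-level operators $\tilde V_i\circ f$ evaluated along the path $y_s$ (for continuous letters) or along $y_{s_-}$ shifted by $v$ and integrated against $\overline{Q}^i$ (for jump letters). These are precisely the length-one entries of the expansion, except that the integrand has not yet been frozen at $y_0$.

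Second, I would proceed by induction on a truncation length $n\geqslant 1$. The induction hypothesis is that $\varphi_t\circ f=\sum_{|w|\leqslant n} I_w(t)\bigl[\tilde V_w\circ f(y_0,\cdot)\bigr]+R_n(t)$, where $R_n(t)$ is a sum over $|w|=n+1$ of iterated integrals whose innermost integrand is still the path-dependent function $\tilde V_w\circ f$ evaluated along $y_\cdot$. To advance from $n$ to $n+1$, apply It\^o's formula to each path-dependent integrand $\tilde V_w\circ f$ inside $R_n(t)$: since $f$ and the $V_i$ are smooth, the composite $\tilde V_w\circ f$ is itself a smooth function of position $x$ and of the jump parameters $v_1,\ldots,v_{\mathfrak s(w)}$, so It\^o's formula applies with no new analytic input. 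The zero-order part of the inner expansion contributes the frozen terms of length $n+1$ to the main sum, and the first-order part contributes the length-$(n+2)$ integrals forming the new $R_{n+1}(t)$. To close the induction into the stated series, I would pass $n\to\infty$: under the standing moment hypothesis on the $J^i$ and the smoothness of $f$ and the $V_i$, the Burkholder--Davis--Gundy estimates for compensated Poisson and Wiener integrals, combined with the It\^o isometry applied nest-by-nest, bound the $L^2$-norm of $R_n(t)$ by a constant multiple of $t^{(n+1)/2}$, so the truncated sums converge to $\varphi_t\circ f$ and the full sum over $A^*$ is well-defined as a Picard series.

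The main obstacle is purely notational bookkeeping: for a word $w$ with $\mathfrak s(w)>1$ the integrand $\tilde V_w\circ f$ is a function on $\mathbb R^N\times\mathbb R^{\mathfrak s(w)}$, and when the inductive step applies It\^o's formula to it each fresh jump parameter must be threaded into the correct compensated Poisson layer of the iterated integral at the depth matching the alphabetic position of the corresponding discontinuous letter in $w$. Once this bookkeeping is fixed, the inductive step is mechanical and the assertion reduces to the stochastic Taylor expansion already recorded in Platen \& Bruti-Liberati.
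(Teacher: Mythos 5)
Your proposal follows essentially the same route as the paper: the theorem is obtained by iterated application of It\^o's formula in the form stated just before it (Picard-type expansion of the integrands, word by word), and the paper itself gives no further detail, simply noting that ``iteratively applying the chain rule'' yields the result and citing Platen \& Bruti-Liberati for the bookkeeping you describe. The only caveat is your closing limit $n\to\infty$: smoothness plus moments alone does not control the constants in the $t^{(n+1)/2}$ remainder bounds uniformly in $n$, and the paper accordingly treats the series in the truncation-plus-remainder sense with convergence of the derived schemes handled separately under the Lipschitz/linear growth conditions of Remark~\ref{rmk:lineargrowth} --- the reading you already adopt at the outset, so this extra claim is dispensable rather than damaging.
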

\begin{remark}[Stochastic Taylor expansion convergence]\label{rmk:lineargrowth}
For integration schemes derived from the stochastic Taylor expansion to converge, 
it suffices that the terms $\tilde{V}_w\circ f$ included in the expansion, and
those at leading order in the remainder, satisfy global Lipschitz 
and linear growth conditions (see Platen \& Bruti-Liberati \cite{PlaBl}). 
Hereafter we assume these conditions are satisfied.
\end{remark}
\begin{remark}[Platen and Bruti--Liberati form]
The stochastic Taylor expansion derived in Platen \& Bruti-Liberati \cite{PlaBl} 
is an equivalent though different representation; we show this in the 
electronic supplementary material.
\end{remark}
Given the stochastic Taylor expansion for the flowmap in Theorem~\ref{th:ST},
we now show how we can write it in separated form. One component of the
separated form are iterated integrals which are free in the sense of having 
no integrand. We define these abstractly for an arbitrary given alphabet 
for the moment, the reason for this will be apparent presently.
\begin{definition}[Free multiple iterated integrals]
Given a collection of stochastic processes $\{Z^{a_i}_t\}_{a_i\in\mathbb{A}}$, 
indexed by a given countable alphabet $\mathbb{A}$,
free multiple iterated integrals take the form
\begin{equation*}
I_w(t) \coloneqq \int_{0<\tau_{1}<\cdots<\tau_m<t}
\,\mathrm{d}Z^{a_1}_{\tau_1}\cdots\,\mathrm{d}Z^{a_m}_{\tau_m},
\end{equation*}
where $w=a_1\ldots a_m$ are words in $\mathbb A^*$. 
\end{definition}
We need to augment $\{W^0,W^1,\ldots,W^d,J^{d+1},\ldots,J^\ell\}$ 
with a set of extended driving processes as follows.
A key component in the characterization of the algebra generated 
by the vector space of free iterated integrals of the driving processes
$\{W^0,W^1,\ldots,W^d, J^{d+1},\ldots,J^\ell\}$ are the compensated power brackets defined for each
$i \in \{d+1, \ldots,\, \ell\}$ and for $p\geq 2$ by
\begin{equation*}
J^{i^{(p)}}_t\coloneqq \int_0^t \int_{\mathbb{R}} v^p \overbar{Q}^i(\mathrm{d}v,\mathrm{d}s).
\end{equation*}
Equivalently we have $J^{i^{(p)}}_t=[J^i]^{(p)}-t\int_{\mathbb R} v^p\,\rho^i(\mathrm{d}v)$,
where $[J_t^i]^{(p)}$ is the $p$th order nested quadratic covariation
bracket of $J_t^i$, i.e.~$[J_t^i]^{(2)}:=[J_t^i, J_t^i]$ 
and $[J_t^i]^{(p)}:= [J_t^i, [J_t^i]^{(p-1)}]$ for $p\geqslant 3$. 
Note that for $p\geqslant 3$ the $p$-th power bracket  $[J_t^i]^{(p)}$ 
equals the sum of the $p$-th power of the jumps of $J^i$ to time $t$.
Importantly, the compensated power brackets have the property that if $J^{i^{(p)}}$ is contained in the
linear span of $\{t,J^i_t,J^{i^{(2)}}_t,\ldots,J^{i^{(p-1)}}_t\}$ for some $p\geq 2$,
then $J^{i^{(q)}}$ is also in this linear span for all $q\geq p$.
Hence inductively for $p\geq 2$, we augment our family 
$\{W^0,W^1,\ldots,W^d, J^{d+1},\ldots,J^\ell\}$ to include the 
compensated power brackets $J^{i^{(p)}}$ as long as they are not contained in the
linear span of $\bigl\{t,J^i_t,J^{i^{(2)}}_t,\ldots,J^{i^{(p-1)}}_t\bigr\}$. 
By doing so, we obtain a possibly infinite family of stochastic processes. 
The iterated integrals of this extended family
form the algebra generated by the iterated integrals of our driving processes. 
See Curry \textit{et al.\/} \cite{CEfMW} for further details. In summary we define 
our extended alphabet as follows.
\begin{definition}[Extended alphabet]
We define our alphabet $\mathbb{A}$ to contain the letters $0,1,\ldots,\ell$, 
associated to the driving processes $\{W^0,W^1,\ldots,W^d, J^{d+1},\ldots,J^\ell\}$, 
and the additional letters $i^{(p)}$ corresponding to any $J^{i^{(p)}}$ 
contained in the extended family as described above.
\end{definition}
\begin{definition}[Separated stochastic Taylor expansion]
The flowmap for a L\'{e}vy-driven stochastic differential equation
possesses a separated stochastic Taylor expansion if it can be written in the form
\begin{equation*}
\varphi_t=\sum_{w\in\mathbb A^*} I_w \tilde{V}_w,
\end{equation*}
where $\{I_w\}_{w\in\mathbb A^*}$ are the free 
iterated integrals associated to the extended driving processes 
and $\tilde{V}_w = \tilde{V}_{a_1}\circ\cdots\circ\tilde{V}_{a_m}$ 
are operators indexed by words that compose associatively.
\end{definition}
\begin{remark}[Separated expansion for jump-diffusion equations] 
In the case of jump-diffusion equations, i.e.\/ L\'evy-driven equations for which 
all the discontinuous driving processes $J^i$ are standard Poisson processes, 
the stochastic Taylor expansion is of the separated form. 
Indeed, as standard Poisson processes have jumps of size one only, 
the operators $\tilde{V}_i$ with $i=d+1,\ldots,\ell$ do not introduce 
a dependence on an additional parameter. The integrands are thus constant 
across the range of integration, and hence the expansion is separated.
\end{remark}
We can construct a separated stochastic Taylor expansion for the flowmap as follows.
By Taylor expansion of the term $f\bigl(y+vV_i(y)\bigr)$ 
appearing in the shift $\tilde{V}_i\circ f$, we have
\begin{equation*}
(\tilde{V}_i\circ f)(y,v)=\sum_{m\geq 1}v^m\tilde{V}_{i^{(m)}} \circ f(y),
\end{equation*}
where we write $V_{i}=\bigl(V_{i}^1,\ldots, V_{i}^N\bigr)^{\mathrm{T}}$ 
and $f=\bigl(f^1,\ldots,f^N\bigr)^{\mathrm{T}}$. 
We define the operators $\tilde{V}_{i^{(m)}}$ by
\begin{equation*}
\tilde{V}_{i^{(m)}}\circ f^j 
\coloneqq\sum_{k\geq1}\sum_{i_1+\ldots + i_k = m} \frac{1}{m!} V_{i}^{i_1}\cdots V_{i}^{i_k} 
\frac{\partial^m f^j}{\partial y^{i_1}\cdots \partial y^{i_k}},
\end{equation*}
where the $i_j\in\mathbb N$. 
The product in $V_{i}^{i_1}\cdots V_{i}^{i_k}$ is multiplication in $\mathbb{R}$. 
We then have 
\begin{equation*}
I_i(t)[(\tilde{V}_i\circ f)(\cdot,v)] 
= \sum_{m\geq 1} \int_0^t \bigl(\tilde{V}_{i^{(m)}}\circ f\bigr)\,\mathrm{d}J^{i^{(m)}}_s,
\end{equation*}
for $i=d+1,\ldots,\ell$. Inserting the above into the relation 
\begin{equation*}
I_w(t)\bigl[\tilde{V}_w\circ f\bigr] 
= I_{a_2\ldots a_m}(t)\Big[I_{a_1}(\cdot)\big[\tilde{V}_{a_1}\circ (\tilde{V}_{a_2\ldots a_m}\circ f)\big]\Big]
\end{equation*} 
and iterating gives the separated expansion, 
where the operators $\tilde{V}_a$ are those of 
the stochastic Taylor expansion for $a\in\{0,1,\ldots,d\}$, 
and for $a=i^{(m)}$ are given by the $\tilde{V}_{i^{(m)}}$ defined above.
We have thus just established the following result.
\begin{theorem}[Separated stochastic Taylor expansion: existence]\label{mark taylor}
For a L\'evy-driven equation suppose the terms $\tilde V_w\circ f$ in the stochastic
Taylor expansion for the flowmap are analytic on $\mathbb R^N$. Then it can 
be written in separated form, i.e.\/ as a separated stochastic Taylor expansion.
\end{theorem}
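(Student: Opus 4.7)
The plan is to work recursively on the stochastic Taylor expansion of Theorem~\ref{th:ST}, converting each integral against a purely discontinuous martingale into a sum of integrals against the extended family of compensated power brackets. The only letters that cause the expansion to fail to be separated are $i\in\{d+1,\ldots,\ell\}$, because the integrand $(\tilde{V}_i\circ f)(y_{s_-},v)=f(y_{s_-}+vV_i(y_{s_-}))-f(y_{s_-})$ carries the extra parameter $v$ that is integrated against $\overbar{Q}^i(\mathrm{d}v,\mathrm{d}s)$. So the strategy is to remove this $v$-dependence by Taylor expanding the shift in powers of $v$.

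First I would use the analyticity hypothesis on $\tilde V_w \circ f$ on $\mathbb R^N$ to write
\begin{equation*}
(\tilde{V}_i\circ f)(y,v)=\sum_{m\geq 1} v^m\,\tilde{V}_{i^{(m)}}\circ f(y),
\end{equation*}
where the coefficient operators $\tilde V_{i^{(m)}}$ are purely differential in $y$ and are read off from the multivariate Taylor coefficients of $f$ composed with $V_i$. Substituting into the $\overbar Q^i$-integral and interchanging sum and stochastic integral (which is the one place analyticity is used, since it ensures absolute and $L^2$ convergence of the $v$-series on the support of $\rho^i$ together with the moment assumptions on $J^i$) produces
\begin{equation*}
\int_0^t\!\!\int_{\mathbb R}(\tilde V_i\circ f)(y_{s_-},v)\,\overbar{Q}^i(\mathrm{d}v,\mathrm{d}s)
=\sum_{m\geq 1}\int_0^t \tilde V_{i^{(m)}}\circ f(y_{s_-})\,\mathrm{d}J^{i^{(m)}}_s,
\end{equation*}
with $J^{i^{(m)}}$ the compensated power brackets from the paper's construction. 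This is the base case: at depth one the non-separated term is rewritten as a sum of separated terms, each indexed by a new letter $i^{(m)}$.

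Next I would iterate using the identity
\begin{equation*}
I_w(t)[\tilde V_w\circ f]=I_{a_2\ldots a_m}(t)\!\left[I_{a_1}(\cdot)\!\left[\tilde V_{a_1}\circ(\tilde V_{a_2\ldots a_m}\circ f)\right]\right],
\end{equation*}
applying the expansion at each occurrence of a letter from $\{d+1,\ldots,\ell\}$ inside a word of $A^*$. Because the analyticity is preserved under composition with the $\tilde V_a$, the expansion can be carried out at every depth, and the net result is a sum over the extended alphabet $\mathbb A^*$ of terms $I_w(t)\,\tilde V_w(y_0)$ in which the stochastic information sits entirely in the free integrals $I_w$ against the extended driving family. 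Finally, one collapses any linear dependencies among the $J^{i^{(p)}}$: whenever a new compensated power bracket is already in the linear span of the previous ones, its contribution is absorbed into those previous letters, which is precisely why $\mathbb A$ is defined to include only the independent $i^{(p)}$.

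The main obstacle I anticipate is making the interchange of the infinite sum with the stochastic integral rigorous simultaneously at every level of the recursion: one needs uniform control of the remainders in the $v$-Taylor expansion against the L\'evy measures $\rho^i$ and the moment hypotheses on the $J^i$, so that the resulting double series over $\mathbb A^*$ and over $m$ converges in $L^2$ and can be reorganized into the claimed separated form. Everything else is formal book-keeping driven by the recursive structure of iterated integrals.
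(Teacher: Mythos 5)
Your proposal is correct and follows essentially the same route as the paper: Taylor expanding the shift $(\tilde V_i\circ f)(y,v)$ in powers of $v$ to obtain the operators $\tilde V_{i^{(m)}}$, rewriting each jump integral as a sum of integrals against the compensated power brackets $J^{i^{(m)}}$, and iterating via the recursive identity for the iterated integrals to land in the extended alphabet $\mathbb A^*$. Your added remark on justifying the interchange of the $v$-series with the stochastic integrals is a fair point about rigor that the paper treats only implicitly via the analyticity assumption, but it does not change the argument.
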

\begin{remark}[Linear vector fields and linear diffeomorphisms] 
In this special case the separated expansion has an especially simple form;
see the electronic supplementary material. Note that the identity map is a 
special case of a linear diffeomorphism.
\end{remark}
Hereafter we assume the existence of a separated Taylor expansion for the flowmap, 
and all iterated integrals are free iterated integrals.

\section{Convolution algebras and map-truncate-invert schemes}\label{sec:convalg}
We now introduce a class of numerical integration schemes we call 
map-truncate-invert schemes and show how they can be encoded algebraically. 
The algebraic structures arise naturally from the products of iterated integrals 
and the composition of operators appearing in the separated stochastic Taylor expansion. 
Consider the class of numerical integration schemes obtained from 
the stochastic Taylor expansion by simulating truncations of the expansion 
on each subinterval of a uniform discretization of a given time domain $[0,T]$. 
\begin{definition}[Grading function and truncations]
A grading function $\mathrm{g}\colon\mathbb{A}^*\rightarrow\mathbb{N}$ assigns
a positive integer to each non-empty word $w\in\mathbb{A}^*$ and zero to the empty word.
A truncation is specified by a grading function and truncation value $n\in\mathbb{N}$. 
We write $\pi_{\g=n}$, $\pi_{\g\leq n}$ and $\pi_{\g\geq n}$ for the projections of $\mathbb{A}^*$ 
onto the following subsets: 
(i) $\pi_{\g=n}(\mathbb{A}^*)\coloneqq\{w\in\mathbb{A}^*\colon \mathrm{g}(w)=n\}$;
(ii) $\pi_{\g\leq n}(\mathbb{A}^*)\coloneqq\{w\in\mathbb{A}^*\colon \mathrm{g}(w)\leq n\}$ and
(iii) $\pi_{\g\geq n}(\mathbb{A}^*)\coloneqq\{w\in\mathbb{A}^*\colon \mathrm{g}(w)\geq n\}$.
\end{definition}
A numerical integration scheme based on the stochastic Taylor expansion is thus 
given by successive applications of the approximate flow
\begin{equation*}
\sum_{w\in\pi_{\g\leq n}(\mathbb{A}^*)} I_w(t) \tilde{V}_w
\end{equation*}
across the computational subintervals. 
Now more generally, consider a larger class of numerical schemes that are constructed as follows. 
For a given invertible map $f\colon\mathrm{Diff}(\mathbb{R}^N)\rightarrow\mathrm{Diff}(\mathbb{R}^N$), 
we construct a series expansion for $f(\varphi_t)$ using the stochastic Taylor expansion for $\varphi_t$. 
We truncate the series and simulate the retained iterated It\^o integrals across each
computational subinterval. An integration scheme is obtained by 
computing the inverse map $f^{-1}$ of the simulated truncations at each step; 
see Malham \& Wiese~\cite{MalWie} and Ebrahimi--Fard \textit{et al.\/} \cite{EfLMMkW}. 
We now develop an algebraic framework for studying such map-truncate-invert schemes.
The starting point is the quasi-shuffle algebra; see Hudson~\cite{Hudson_2} and Hoffman~\cite{Hof}.
This gives an explicit description of the algebra of iterated integrals 
of the extended driving processes. Let $\mathbb{RA}$ denote the 
$\mathbb{R}$-linear span of $\mathbb{A}$, and let  $\mathbb{R}\langle \mathbb{A}\rangle$ 
denote the vector space of polynomials 
in the non-commuting variables in $\mathbb{A}$.
\begin{definition}[Quasi-shuffle product]\label{qs defn}
For a given alphabet $\mathbb{A}$, suppose 
$[\,\cdot\,,\,\cdot\,]\colon\mathbb{RA} \otimes \mathbb{RA} \rightarrow \mathbb{RA}$ 
is a commutative, associative product on $\mathbb{RA}$. 
The quasi-shuffle product 
on $\mathbb{R}\langle \mathbb{A}\rangle$, which is commutative, 
is generated inductively as follows:
if $\mathbbold{1}$ is the empty word then $u*\mathbbold{1}=\mathbbold{1}*u=u$ and
\begin{equation*}
ua*vb = (u*vb)a + (ua*v)b + (u*v)[a,b],
\end{equation*}
for all words $u,v\in\mathbb{A}^*$ and letters $a,b\in\mathbb{A}$. 
Here $ua$ denotes the concatenation of $u$ and $a$. 
\end{definition}
\begin{remark}[Word-to-integral isomorphism] 
The word-to-integral map $\mu\colon w\mapsto I_w$ 
is an algebra isomorphism. Here the domain is the vector space 
$\mathbb{R}\langle\mathbb{A}\rangle$ equipped with the quasi-shuffle product.
Iterated integrals indexed by polynomials are defined by linearity, i.e.\/ 
$I_{k_u u + k_v v} = k_u I_{u} + k_v I_{v}$,
for any constants $k_u,k_v\in\mathbb{R}$ and words $u,v\in\mathbb{A}^*$.
This was proved in Curry \textit{et al.\/} \cite{CEfMW}, it had already been
established by Gaines \cite{Gai} for drift-diffusions
and Li \& Liu \cite{LiLiu} for jump-diffusions. 
\end{remark}
\begin{remark}[Shuffle product]
The quasi-shuffle product is a deformation of the shuffle product on 
$\mathbb{R}\langle \mathbb{A}\rangle$. The deformation is induced 
by an additional product $[\,\cdot\,,\,\cdot\,]$ defined on $\mathbb{RA}$. 
If the product $[\,\cdot\,,\,\cdot\,]$ is 
identically zero,
the quasi-shuffle product is just the shuffle product. 
\end{remark}
\begin{remark}[Quadratic covariation bracket]
Here we associate the product 
$[\,\cdot\,,\,\cdot\,]\colon\mathbb{RA}\otimes\mathbb{RA}\rightarrow\mathbb{RA}$ 
underlying the quasi-shuffle with the pullback under the word-to-integral map $\mu$ 
of the quadratic covariation bracket of semimartingales. 
Explicitly, $[0,a]$ is zero for all $a$, and
$[i^{(p)},j^{(q)}] = \delta_{ij} \bigl(\lambda(i,p,q)\,\cdot\,0 
+ \big(1_{\{d+1,\ldots,\ell\}}(i)\big)\,\cdot\, i^{(p+q)}\bigr)$,
where $\lambda(i,p,q)=1$ if $i\in\{1,\ldots,d\}$, 
and $\lambda(i,p,q) = \int_{\mathbb{R}} v^{p+q} \meas^i(\mathrm{d}v)$ if $i\in\{d+1,\ldots,\ell\}$. 
The $0$ refers to the letter $0\in\mathbb{A}$, 
and $1_{\{d+1,\ldots,\ell\}}$ is the indicator function of the set $\{d+1,\ldots,\ell\}$. 
\label{rmk:cov}
\end{remark}
\begin{remark}[Word-to-operator homomorphism]
The word-to-operator map $\kappa\colon w\mapsto \tilde{V}_w$ is an algebra homomorphism. 
Here the domain is $\mathbb{R}\langle\mathbb{A}\rangle$ equipped with the concatenation product. 
This follows as $\mathbb{R}\langle\mathbb{A}\rangle$ equipped with the concatenation product 
is the free associative $\mathbb{R}$-algebra over $\mathbb{A}$, see Reutenauer~\cite{Reu}, 
and each $\tilde{V}_w$ is given by the associative composition of operators $\tilde{V}_{a_i}$. 
\end{remark}
We write $\mathbb{R}\langle\mathbb{A}\rangle_*$ for the quasi-shuffle algebra, 
otherwise the product on $\mathbb{R}\langle\mathbb{A}\rangle$ is concatenation. 
The preceding observations combine to give an encoding 
of integration schemes in the algebra 
$\mathbb{R}\langle\mathbb{A}\rangle_* \overline{\otimes}\, \mathbb{R}\langle\mathbb{A}\rangle$, 
where $\overline{\otimes}$ is the completed tensor product of Reutenauer~\cite[p.~18, 29]{Reu}. 
\begin{proposition}[Algebraic encoding of the flowmap]\label{prop encoding}
For a given L\'{e}vy-driven equation, the map 
$\mu\otimes\kappa$ is a homomorphism from 
$\mathbb{R}\langle\mathbb{A}\rangle_* \overline{\otimes}\, \mathbb{R}\langle\mathbb{A}\rangle$ 
to the tensor product of the algebra of iterated integrals 
of the extended driving processes and the composition algebra 
generated by the set of operators $\tilde{V}_w$. 
Moreover, the flowmap of the equation is the image under $\mu\otimes\kappa$ of the element
\begin{equation*}
\sum_{w\in \mathbb{A}^*} w\otimes w.
\end{equation*}
\end{proposition}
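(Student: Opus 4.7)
The plan is to decompose the proposition into two parts: (i) verify that $\mu\otimes\kappa$ is an algebra homomorphism on the completed tensor product by assembling the two factorwise statements already recorded in the Word-to-integral and Word-to-operator remarks, and (ii) recognize the stated series as the separated stochastic Taylor expansion of Theorem~\ref{mark taylor}.

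First I would spell out the product structure on both sides. On the source, $\mathbb{R}\langle\mathbb{A}\rangle_*\overline{\otimes}\,\mathbb{R}\langle\mathbb{A}\rangle$ carries the bilinear product $(u\otimes x)(v\otimes y)=(u*v)\otimes(xy)$, in which $*$ is the quasi-shuffle product of Definition~\ref{qs defn} and $xy$ is concatenation. On the target, the tensor product algebra carries the product $(I_u\otimes \tilde V_x)(I_v\otimes \tilde V_y)=(I_u I_v)\otimes(\tilde V_x\circ \tilde V_y)$, where the left factor is pointwise multiplication of free iterated integrals and the right factor is composition of the differential/difference operators defined in Section~\ref{sec:sepST}. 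The map $\mu\otimes\kappa$ is defined on simple tensors by $(\mu\otimes\kappa)(u\otimes x)=I_u\otimes\tilde V_x$, and extended bilinearly.

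Next I would verify the homomorphism property on simple tensors. The Word-to-integral isomorphism gives $\mu(u*v)=I_u I_v$, so the left factor matches. The Word-to-operator homomorphism gives $\kappa(xy)=\tilde V_x\circ\tilde V_y$, so the right factor matches. Multiplying the two yields
\begin{equation*}
(\mu\otimes\kappa)\bigl((u\otimes x)(v\otimes y)\bigr)
=I_{u*v}\otimes \tilde V_{xy}
=(I_u I_v)\otimes(\tilde V_x\circ\tilde V_y)
=(\mu\otimes\kappa)(u\otimes x)\cdot(\mu\otimes\kappa)(v\otimes y),
\end{equation*}
and bilinearity extends this to the algebraic tensor product. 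To pass to the completed tensor product $\overline{\otimes}$, I would invoke Reutenauer's construction: elements are formal series graded by word length, products are computed degree-by-degree, and $\mu\otimes\kappa$ respects this grading (the product of $u\otimes x$ and $v\otimes y$ lives in a single fixed degree determined by the lengths), so the homomorphism property extends directly by continuity in the formal-series topology.

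Finally, applying $\mu\otimes\kappa$ term by term to $\sum_{w\in\mathbb{A}^*}w\otimes w$ gives $\sum_{w\in\mathbb{A}^*}I_w\,\tilde V_w$, which is exactly the separated stochastic Taylor expansion of the flowmap as established in Theorem~\ref{mark taylor}. The main obstacle I anticipate is purely notational: carefully aligning the algebraic identifications so that the pointwise product of free iterated integrals, as opposed to a different tensor structure, is what appears on the target side, and confirming that the extension to $\overline{\otimes}$ is well-defined despite the infinite sum. Both issues are resolved by appealing to the graded/filtered structure and to the already-cited isomorphism with the algebra of iterated integrals from Curry \textit{et al.}~\cite{CEfMW}.
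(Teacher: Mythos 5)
Your proposal is correct and follows essentially the same route as the paper, which treats the proposition as an immediate consequence of the word-to-integral isomorphism, the word-to-operator homomorphism, and the separated stochastic Taylor expansion of Theorem~\ref{mark taylor}, combined factorwise over the completed tensor product. The only slight imprecision is your grading remark: the quasi-shuffle product does not preserve word length (brackets shorten words), but this is harmless since well-definedness on $\overline{\otimes}$ only needs the concatenation factor, where each word has finitely many factorizations.
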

Truncations of this abstract series representation in 
$\mathbb{R}\langle\mathbb{A}\rangle_*\overline{\otimes}\,\mathbb{R}\langle\mathbb{A}\rangle$ 
generate approximations of the flowmap and hence classes 
of stochastic Taylor numerical integration schemes. 
Using this context, we now construct an abstract representation for 
$f(\varphi_t)$ for any given map 
$f\colon\mathrm{Diff}(\mathbb R^N)\to\mathrm{Diff}(\mathbb R^N)$ 
which is expressible as a power series. 
The key idea is that $f(\varphi_t)$ may be rewritten as the image under 
$\mu\otimes\kappa$ of a series $\sum F(w)\otimes w$, 
where $F\in\mathrm{End}(\mathbb{R}\langle \mathbb{A}\rangle_*)$, 
the space of $\mathbb{R}$-linear maps from the quasi-shuffle algebra $\mathbb{R}\langle \mathbb{A}\rangle_*$ 
to itself; see Malham \& Wiese~\cite{MalWie} who established this in the shuffle product context. 
The explicit form of $F\in\mathrm{End}(\mathbb{R}\langle \mathbb{A}\rangle_*)$ is obtained using the 
convolution algebra associated to the quasi-shuffle product.
\begin{definition}[Quasi-shuffle convolution product]
For a given quasi-shuffle product $*$ on $\mathbb{R}\langle\mathbb{A}\rangle_*$, 
the convolution product $\star$ of $F$ and $G$ in 
$\mathrm{End}(\mathbb{R}\langle\mathbb{A}\rangle_*)$ is given by
\begin{equation*}
F\star G \coloneqq *\circ(F\otimes G)\circ\Delta,
\end{equation*} 
where $\Delta$ is the deconcatenation coproduct that 
sends a word $w$ to the sum of all its two-partitions $u\otimes v$ and extends linearly
to $\mathbb{R}\langle\mathbb{A}\rangle$, 
i.e.\/ explicitly for any word $w\in\mathbb A^*$ we have 
$\Delta(w)=\sum_{uv=w}u\otimes v$ and 
\begin{equation*}
\bigl(F\star G\bigr)(w)=\sum_{uv=w}F(u)*G(v).
\end{equation*} 
\end{definition}
The quasi-shuffle algebra $\mathbb{R}\langle\mathbb{A}\rangle_*$ together 
with the coproduct $\Delta$ forms a bialgebra; 
see Hoffman~\cite{Hof} and Hudson~\cite{Hudson_1}. 
In particular, when equipped with the convolution product, 
the space $\mathrm{End}(\mathbb{R}\langle\mathbb{A}\rangle_*)$ becomes a unital algebra, 
where the unit $\nu$ is given by the composition of the unit of the quasi-shuffle algebra 
and the counit of the deconcatenation coalgebra, see Abe \cite{Abe}. 
Explicitly, $\nu$ is the linear map that sends non-empty words to zero and the empty word to itself. 
We define the embedding $\mathrm{End}(\mathbb{R}\langle\mathbb{A}\rangle_*)
\rightarrow\mathbb{R}\langle\mathbb{A}\rangle_*\overline{\otimes}\,\mathbb{R}\langle\mathbb{A}\rangle$ 
by 
\begin{equation*}
F\mapsto \sum_{w\in \mathbb{A}^*} F(w)\otimes w.
\end{equation*}
This is an algebra homomorphism for the quasi-shuffle convolution product;
see Reutenauer \cite{Reu}, Curry~\cite{Cur} 
and Ebrahimi--Fard \textit{et al.\/} \cite{EfMPW2}.
Given a power series $f(x)=\sum_{k\geq0} c_k x^k$ with $c_k\in\mathbb{R}$, 
we define the convolution power series $F^{\star}(X)\coloneqq\sum_{k\geq0} c_k X^{\star k}$, 
where the $X^{\star k}$ are the $k$th convolution powers of 
$X\in\mathrm{End}(\mathbb{R}\langle\mathbb{A}\rangle_*)$. 
The following representation of $f(\varphi_t)$ is immediate.
\begin{proposition}[Convolution power series]\label{conv ps}
If $f\colon\mathbb{R}\langle\mathbb{A}\rangle_*\overline{\otimes}\,\mathbb{R}\langle\mathbb{A}\rangle
\to\mathbb{R}\langle\mathbb{A}\rangle_*\overline{\otimes}\,\mathbb{R}\langle\mathbb{A}\rangle$ 
has a power series $f(x)=\sum_{k\geq0} c_k x^k$, we have 
\begin{equation*}
f\left(\sum_{w\in\mathbb{A}^*} w\otimes w\right) 
=\sum_{w\in\mathbb{A}^*} F^{\star}(\id)(w) \otimes w .
\end{equation*}
Further, when the power series $f$ has an inverse 
$f^{-1}(x)=\sum_{k\geq0} b_k x^k$ with $b_k\in\mathbb{R}$, 
the compositional inverse of the convolution power series for $F$ 
is given by the associated convolution power series $F^{-1}$. 
\end{proposition}
In other words the pre-image under $\mu\otimes\kappa$ of $f(\varphi_t)$ for any such
$f\colon\mathrm{Diff}(\mathbb{R}^N)\rightarrow\mathrm{Diff}(\mathbb{R}^N)$,
which we also represent by
$f\colon\mathbb{R}\langle\mathbb{A}\rangle_*\overline{\otimes}\,\mathbb{R}\langle\mathbb{A}\rangle
\to\mathbb{R}\langle\mathbb{A}\rangle_*\overline{\otimes}\,\mathbb{R}\langle\mathbb{A}\rangle$,
is given by the term on the left in the proposition, and thus $f(\varphi_t)$
can be represented by $F^\star(\id)$.
Combining the algebraic encoding of the flowmap
in Proposition~\ref{prop encoding} with 
Proposition~\ref{conv ps}, 
we construct the map-truncate-invert scheme associated 
to a given power series $f$ as follows.
\begin{procedure}[Map-truncate-invert scheme]\label{proc}
Let $f\colon\mathrm{Diff}(\mathbb{R}^N)\rightarrow\mathrm{Diff}(\mathbb{R}^N)$ 
be an invertible map admitting an expansion as a power series. 
For a given truncation function $\pi_{\g\leq n}$, 
the associated map-truncate-invert scheme across 
a fixed computational interval $[0,t]$ is obtained as follows.
\begin{enumerate}
\item Construct the series $F^\star(\id)$;
\item Simulate the truncation $\pi_{\g\leq n}\circ F^\star(\id)$ given by
$\hat{\sigma}_t\coloneqq\sum_{w\in\pi_{\g\leq n}(\mathbb{A}^*)}I_{F^{\star}(\id)(w)}(t) \tilde{V}_w(\id)$.
\item Compute the approximation $f^{-1}(\hat{\sigma}_t)\circ y_0$.
\end{enumerate}
\end{procedure}
\begin{remark}
In $\hat{\sigma}_t$ in (ii) above: The identity map in $F^{\star}(\id)(w)$ is the 
identity endomorphism in $\mathrm{End}(\mathbb{R}\langle\mathbb{A}\rangle_*)$,
while the identity map in $\tilde{V}_w(\id)$ is the identity diffeomorphism
on $\mathbb{R}^N$.
\end{remark}
\begin{remark}[Map-truncate-invert endomorphism]
Map-truncate-invert schemes are realizations 
of the integration scheme associated to the endomorphism 
$F^{-1}\circ\pi_{\g\leq n}\circ F^{\star}(\id)\in\mathrm{End}(\mathbb{R}\langle\mathbb{A}\rangle)$. 
\end{remark}
An endomorphism which will prove useful in what follows is the augmented ideal projector.
\begin{definition}[Augmented ideal projector]
The augmented ideal projector denoted by $\sJ$ is given by $\sJ\coloneqq\id-\nu$. 
In other words, it acts as the identity on non-empty words, but sends the empty word to zero. 
\end{definition}
\begin{example}[Exponential Lie series]
Recall from the introduction that the motivating example 
of a map-truncate-invert scheme was the exponential Lie series integrator, 
corresponding to $f=\log$. For any endomorphism $X$ that maps the empty word to itself, 
$\log^{\star}(X)$ is a power series in $X-\nu$. 
In particular, we have 
$\log^{\star}(\id)=\sJ-\sJ^{\star 2}/2+\cdots+(-1)^{k+1}\sJ^{\star k}/k+\cdots$.
\end{example}
\begin{remark} 
For any word $w$, say of length $k$, we can naturally truncate a series
in convolutional powers of $\sJ$ with real coefficients $c_1$, $c_2$, $c_3$ and so forth, 
to $c_1\sJ+c_2\sJ^{\star2}+\cdots+c_k\sJ^{\star k}$. This is because the action of 
$\sJ^{\star(k+1)}$ and subsequent terms in the series is, by convention, 
zero as a word of length $k$ cannot be partitioned into more than $k$ non-empty parts. 
\end{remark}
The computation of $f^{-1} (\hat{\sigma}_t)$ from $\hat{\sigma}_t$ is in general non-trivial. 
For the exponential Lie series integrator, 
Castell \& Gaines \cite{CasGai95,CasGai96} proposed computing $\exp(\hat{\sigma}_t)(y_0)$ 
by numerical approximation of the ordinary differential equation 
$u'=\hat{\sigma}_t(u)$, subject to the initial condition $u(0)=y_0$. 
An approximate solution may then be recovered from $u(1)$. 
For the sinhlog integrator in the shuffle product context, Malham \& Wiese \cite{MalWie} 
showed that for linear constant coefficient equations, since $\hat{\sigma}$ is a square matrix, 
the approximate flow 
$\exp\sinh^{-1}(\hat{\sigma}_t)=\hat{\sigma}_t+\bigl(\id+\hat{\sigma}^2_t\bigr)^{1/2}$ 
may be computed using a matrix square root. If the vector fields are nonlinear 
Malham \& Wiese \cite{MalWie} suggested expanding the square root to sufficiently  
high degree terms. In \textsection5 we introduce direct map-truncate-invert schemes
which evaluate $F^{-1}\circ\pi_{\g\leq n}\circ F^{\star}(\id)$ directly and circumvent
the step involving the computation of $f^{-1}$ in general.

\section{Endomorphism inner product, gradings and error analysis}\label{sec:wlvsms}
We establish convergence and accuracy of integration schemes at an algebraic level. 
We first define an inner product on the space of endomorphisms corresponding 
to the $L^2$ inner product of the associated approximate flows. 
We then define the mean-square grading and introduce stochastic Taylor
schemes of a specific local and then global order. We subsequently consider 
word length grading. We show stochastic Taylor integration schemes of a 
given strong order obtained by truncating according to word length are always 
more accurate than those obtained by truncating according to the mean-square grade. 
We measure accuracy by the leading order term 
in the $L^2$-norm of the remainder after truncation. To start, to define the 
endomorphism inner product, we require an algebraic encoding of the expectation 
of the iterated integrals.
\begin{definition}[Expectation map]\label{expectation}
For any word $w\in\mathbb{A}^*$, the expectation map 
$E\colon \mathbb{R}\langle\mathbb{A}\rangle \rightarrow \mathbb{R}[t]$ is defined by
$E\colon w\mapsto t^{|w|}/|w|!$ if $w\in\{0\}^*$ and is zero for all other words. 
Here $|w|$ denotes the length of the word $w$, 
$\{0\}^*\subset\mathbb{A}^*$ is the free monoid over the letter $0$ 
and $\mathbb{R}[t]$ is the polynomial ring 
over a single indeterminate $t$ commuting with $\mathbb{R}$.
\end{definition}
\begin{remark}[Expectation of iterated It\^o integrals]
The expectation map corresponds to the expectation of iterated It\^o integrals
as follows. First, integrals indexed by words not ending in the letter $0$ are martingales 
and hence have zero expectation. Second, consider integrals indexed by a word with 
at least one non-zero letter. Fubini's Theorem implies such integrals also have zero expectation; 
see Protter~\cite{Pro}. 
\end{remark}
We now define an inner product on $\mathrm{End}(\mathbb{R}\langle\mathbb{A}\rangle_*)$, 
following Ebrahimi--Fard \textit{et al.\/ } \cite{EfLMMkW}.
Suppose we apply two separate functions to the flowmap which are
characterised by the endomorphisms $F$ and $G$ in 
$\mathrm{End}(\mathbb{R}\langle\mathbb{A}\rangle_*)$.
The stochastic processes generated by these 
functions of the flowmap, for given initial data $y_0$, are 
$\sum_{w\in\mathbb A^*} I_{F(w)}\tilde{V}_w(y_0)$ and $\sum_{w\in\mathbb A^*}I_{G(w)}\tilde{V}_w(y_0)$.
Our goal is to define an inner product $\langle F, G\rangle$ 
on $\mathrm{End}(\mathbb{R}\langle\mathbb{A}\rangle_*)$ 
which matches the $L^2$-inner product of these two vector-valued stochastic processes.
However we would like all of our results to be independent of 
the initial data $y_0$ and the governing vector fields appearing in the $\tilde{V}_w$ terms. 
We achieve this by replacing the vectors $\tilde{V}_w(y_0)$ 
with a set of indeterminate 
vectors indexed by words, $\{\mathbf{V}_w\}_{w\in \mathbb{A}^*}$. 
We write $(u,v)$ for the inner product of $\mathbf{V}_u$ and $\mathbf{V}_v$, 
i.e.\/ $(u,v)\coloneqq \mathbf{V}_u^{\mathrm{T}}\,\mathbf{V}_{v}$.
Let $\mathbf{V}$ denote the infinite square matrix indexed by the words $u,v\in\mathbb A^*$
with entries $(u,v)$.
\begin{definition}[Inner product]\label{inner product 2}
We define the inner product of endomorphisms $F$ and $G$ in
$\mathrm{End}(\mathbb{R}\langle\mathbb{A}\rangle_*)$ with respect to $\mathbf{V}$ to be
\begin{equation*}
 \langle F, G\rangle \coloneqq \sum_{u,v\in \mathbb{A}^*}E\,\bigl(F(u)*G(v)\bigr)\, (u,v).
\end{equation*}
\end{definition}
All results we subsequently establish will hold independent of $\mathbf V$.
\begin{remark}[Positive definiteness]
As the operators $\tilde{V}_i$ typically include second 
(or higher) order differential operators that send the identity map to zero, 
distinct endomorphisms may be associated with the same stochastic process. 
For linear constant coefficient equations for instance, 
the operators $\tilde{V}_{i^{(m)}}$ are zero operators for all $m>1$, 
and the stochastic process associated to any endomorphism is trivial if its image lies 
in the two-sided ideal in $\mathbb{R}\langle\mathbb{A\rangle_*}$ 
generated by the letters $i^{(m)}, m>1$. 
To obtain positive definiteness of the inner product, 
we pass to the quotient of $\mathrm{End}(\mathbb{R}\langle\mathbb{A}\rangle_*)$ 
under the equivalence relation for which endomorphisms yielding the same 
stochastic process are equivalent. Positive definiteness on the quotient space 
follows by similar arguments to those in Ebrahimi--Fard \textit{et al.\/} \cite{EfLMMkW}. 
\end{remark}
The norm, in this quotient space, 
of an endomorphism $F\in\mathrm{End}(\mathbb{R}\langle\mathbb{A}\rangle_*)$
is $\|F\|\coloneqq\langle F, F\rangle^{1/2}$.

The first part of our error analysis uses mean-square grading, 
see Platen \& Bruti-Liberati~\cite{PlaBl}.
\begin{definition}[Mean-square grading]
For any word $w\in\mathbb A^*$, the map 
$\mathrm{g}^{\mathrm{ms}}\colon w\mapsto2\zeta(w)+\xi(w)$
is the mean-square grading,
where $\zeta(w)$ and $\xi(w)$ are the number of zero 
and non-zero letters in $w$, respectively.
\end{definition}
\begin{definition}[Reduced words]
For a given word $w$ the reduced word $\mathrm{red}(w)$ is defined as the 
word obtained by deleting any zero letters, and replacing any 
letters of the form $i^{(m)}$ with $i$. 
\end{definition}
\begin{example}
If $w=010024^{(3)}30$, we have $\mathrm{red}(w)=1243$. 
\end{example}
The following result will be useful in our subsequent error analysis,
see Kloeden \& Platen~\cite{KloPla}.
\begin{lemma}[Expectation of products]\label{MSG lemma}
Let $\mathbb{R}\langle\mathbb{A}\rangle_*$ be the quasi-shuffle algebra based on
the set of independent L\'{e}vy processes $\{t,W^1,\ldots,W^d,J^{d+1},\ldots,J^{\ell}\}$
extended by covariation. 
For any words $u,v\in\mathbb{A}^*$, if $\mathrm{red}(u)=\mathrm{red}(v)$ then 
there is a non-zero constant $C=C(u,v)$ such that 
\begin{equation*}
E(u*v)=C(u,v)t^{(\mathrm{g}^{\mathrm{ms}}(u)+\mathrm{g}^{\mathrm{ms}}(v))/2},
\end{equation*}
otherwise the expectation of the product $u*v$ is zero.
\end{lemma}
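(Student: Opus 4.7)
The plan is to expand the quasi-shuffle product $u*v$ via its inductive definition into a linear combination of words in $\mathbb A^*$, then apply the expectation map $E$ term-by-term. Because $E$ sends any word with a non-zero letter to zero, only the contributions supported in $\{0\}^*$ survive, and the whole computation reduces to identifying and counting the pure-zero words appearing in the expansion of $u*v$.

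First I would examine when a pure-zero word $0^n$ can appear in $u*v$ with non-zero coefficient. In the recursion $ua*vb=(u*vb)a+(ua*v)b+(u*v)[a,b]$, every letter of the resulting word is either inherited from $u$, inherited from $v$, or produced by a bracket $[a,b]$ combining one letter from each side. From Remark~\ref{rmk:cov}, any bracket involving the letter $0$ vanishes and any bracket of non-zero letters with distinct process indices vanishes; only $[i^{(p)},i^{(q)}]$ contributes a zero-letter term, weighted by $\lambda(i,p,q)$. For the output word to lie in $\{0\}^*$, every non-zero letter of $u$ must therefore be paired with a non-zero letter of $v$ via a same-process bracket, and since the recursion only brackets the rightmost unconsumed letters of each side, the pairing must be non-crossing with respect to the left-to-right orders within $u$ and within $v$.

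Next I would argue that such a non-crossing, process-matching pairing exists if and only if the sequences of non-zero process indices in $u$ and $v$ coincide letter-for-letter, i.e.\/ $\mathrm{red}(u)=\mathrm{red}(v)$, and that when it exists it is unique. This immediately yields the ``otherwise'' clause: if $\mathrm{red}(u)\neq\mathrm{red}(v)$, no word in $\{0\}^*$ appears in $u*v$ with non-zero coefficient, and $E(u*v)=0$. In the matching case, each of the $\xi(u)=\xi(v)$ brackets collapses one non-zero letter of $u$ and one of $v$ into a single zero letter, and the remaining $\zeta(u)+\zeta(v)$ original zero letters interleave with these bracket-generated zero letters to produce words of a single common length
\[
n=\zeta(u)+\zeta(v)+\xi(u)=\tfrac12\bigl(\mathrm g^{\mathrm{ms}}(u)+\mathrm g^{\mathrm{ms}}(v)\bigr),
\]
using $\xi(u)=\xi(v)$. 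Since $E(0^n)=t^n/n!$, the stated power of $t$ emerges, with the prefactor packaging the combinatorial interleaving counts and the product $\prod_k\lambda(i_k,p_k,q_k)$ of bracket weights along the unique matching into a single constant $C(u,v)$.

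The main technical obstacle is verifying that $C(u,v)\neq 0$. The combinatorial interleaving factor is a positive integer and hence never vanishes, so the question reduces to whether the product of $\lambda$-factors along the unique matching is non-zero. For pairings within Wiener letters $\lambda=1$ automatically, so the issue is confined to jump letters where $\lambda(i,p,q)=\int_{\mathbb R}v^{p+q}\meas^i(\mathrm dv)$. Under the standing moment hypotheses on the $\meas^i$ and the non-degeneracy built into the construction of the extended alphabet $\mathbb A$---which only admits a new letter $i^{(p)}$ when $J^{i^{(p)}}$ is linearly independent of the previously included compensated power brackets---each relevant $\lambda$-factor is non-zero, so $C(u,v)\neq 0$, completing the argument.
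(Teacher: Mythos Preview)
Your argument is essentially the same as the paper's: both unfold the quasi-shuffle recursion $ua*vb=(u*vb)a+(ua*v)b+(u*v)[a,b]$, observe that zero letters can only pass through the first two branches while non-zero letters must be consumed in pairs via the bracket, and conclude that a word in $\{0\}^*$ appears in $u*v$ precisely when the non-zero letters of $u$ and $v$ can be matched in order, i.e.\ $\mathrm{red}(u)=\mathrm{red}(v)$; the exponent of $t$ then drops out from the forced letter count.

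One point deserves care. Your proposed justification that each jump weight $\lambda(i,p,q)=\int_{\mathbb R}v^{p+q}\meas^i(\mathrm dv)$ is non-zero---namely, the non-degeneracy built into the construction of the extended alphabet $\mathbb A$---does not actually deliver this. Linear independence of $J^{i^{(p)}}$ from $\{t,J^i,\ldots,J^{i^{(p-1)}}\}$ is a statement about the process $J^{i^{(p)}}$ itself, not about the moments $\int v^{p+q}\meas^i(\mathrm dv)$; for instance, a symmetric L\'evy measure $\meas^i$ can have all compensated power brackets linearly independent while every odd moment vanishes. The paper does not supply an argument here either---it simply asserts that $\langle[i^{(p)},i^{(q)}],0\rangle$ is non-zero whenever $i^{(p)}$ and $i^{(q)}$ exist---so your proof matches the paper's level of rigor on this point, but the specific reason you give is not the right one.
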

\begin{proof}
Let $\langle p,w\rangle_{\mathbb{R}\langle\mathbb{A}\rangle}$ denote the 
coefficient of a given word $w$ in the polynomial $p$. We then have
\begin{equation*}
E(u*v)=\sum_{w\in\{0\}^*}\langle u*v,w\rangle_{\mathbb{R}\langle\mathbb{A}\rangle}\frac{1}{|w|!}t^{|w|}.
\end{equation*}
Consider the generating relation of the quasi-shuffle product in Definition~\ref{qs defn}, 
which states that $ua*vb = (u*vb)a + (ua*v)b + (u*v)[a,b]$.
We see that if a summand in the polynomial $ua*vb$ 
is to be a multiple of $w\in\{0\}^*$, we require either $a=0$, $b=0$ or 
$\langle[a,b],0\rangle_{\mathbb{R}\langle\mathbb{A}\rangle}$ non-zero. 
As the quadratic covariation of $t$ with any L\'{e}vy process vanishes, 
all zero letters contribute to the expectation only through the first two terms in the 
quasi-shuffle generating relation above.
In contrast, non-zero letters $a$ and $b$ contribute only through the third term. 
We see that each zero letter appears exactly once in any $w\in\{0\}^*$ 
for which $\langle u*v,w\rangle_{\mathbb{R}\langle\mathbb{A}\rangle}$ is non-zero, 
and any pair of non-zero letters $a,b$ contribute one letter together. 
In particular, we have $E(u*v)=C(u,v) t^{(\mathrm{g}^{\mathrm{ms}}(u)+\mathrm{g}^{\mathrm{ms}}(v))/2}$, 
where $C(u,v)$ may equal zero. 

By the independence of the driving processes, we have $[i,j]$, $[i^{(p)},j]$ and $[i^{(p)},j^{(q)}]$
are all zero for all $i,j,p,q$, $i\neq j$. Moreover, we have
$[J^{i^{(p)}},J^{i^{(q)}}]_t = J^{i^{(p+q)}}_t + t\int_{\mathbb{R}} v^{p+q} \meas^i(\mathrm{d}v)$. 
In particular, we have that $\langle[i^{(p)},i^{(q)}],0\rangle_{\mathbb{R}\langle\mathbb{A}\rangle}$ 
is non-zero for all $i>d$ and all $p,q\geq 1$ for which $i^{(p)}$ and $i^{(q)}$ exist. 
Moreover, as $[W^i,W^i]_t=t$, we have $[i,i]=0$ for $i=1\ldots,d$.

The expression $u*v$ is a linear combination of words, each of which arises 
from a choice of one of the three terms at each stage in the 
inductive quasi-shuffle generating relation.
Indeed the $k$th letter in a given word thus obtained 
is the letter $a$, $b$ or $[a,b]$ chosen at the $k$th application of the inductive definition. 
For this word to consist of only zeros, we must choose the first or the second term 
as long as either $a$ or $b$ is zero. When $a$ and $b$ are both non-zero, 
we must choose the third term. From the preceding paragraph, 
this will be a sum featuring a multiple of the zero letter provided 
both letters are either equal to $i$, where $i=1,\ldots,d$, 
or are $i^{(p)}$ and $i^{(q)}$ respectively for some $i>d$ and $p,q\geq 1$. 
Continuing this procedure, we see that we will obtain a word comprising 
only zeros if and only if the reduced words $\mathrm{red}(u)$ and $\mathrm{red}(v)$ are equal.
\end{proof}
We wish to compare the errors of different integration schemes across 
a given time step at the algebraic level, using the inner product defined above. 
As the flowmap corresponds to the identity in $\mathrm{End}(\mathbb{R}\langle\mathbb{A}\rangle_*)$, 
we define the remainder endomorphism as follows.
\begin{definition}[Remainder endomorphism]\label{remainder}
For any endomorphism $H\in\mathrm{End}(\mathbb{R}\langle\mathbb{A}\rangle_*)$ 
encoding an approximation of the flowmap, 
we define the associated remainder endomorphism $R$ to be 
\begin{equation*}
R\coloneqq\id-H.
\end{equation*}
\end{definition}
\begin{example} 
A simple example is $H=\pi_{\g\leq n}$ corresponding to 
a stochastic Taylor expansion truncated according to a given grading `$\mathrm{g}$'.
In this case $R=\id-\pi_{\g\leq n}=\pi_{\g\geq n+1}$. 
More generally we might have $H=F^{-1}\circ\pi_{\g\leq n}\circ F(\id)$ so
$R=\id-F^{-1}\circ\pi_{\g\leq n}\circ F(\id)$. 
\end{example}
For the rest of this section we focus on numerical schemes constructed
by truncating the stochastic Taylor expansion, first, according to mean-square
grading and, second, according to word length grading.
We note the flowmap $\varphi_t\in L^2$, see Lemma~\ref{flow lemmas}.
Let $\hat\varphi_t^{\mathrm{ms}}$ 
denote the truncated stochastic Taylor expansion, truncated according to mean-square grading. 
We denote the corresponding remainder by 
$R_t^{\mathrm{ms}}\coloneqq\varphi_t-\hat\varphi_t^{\mathrm{ms}}$
and note $R_t^{\mathrm{ms}}\in L^2$, see Platen \& Bruti--Liberati~\cite{PlaBl}. 
Locally the numerical scheme is of mean-square order $n$ if 
\begin{equation*}
R_t^{\mathrm{ms}}=\sum_{w\in\pi_{\g\geq n+1}(\mathbb{A}^*)}I_w(t)\tilde V_w.
\end{equation*}
In particular this implies, for sufficiently small $t$, that 
$\|R_t^{\mathrm{ms}}(y_0)\|_{L^2}^2=(1+|y_0|^2)\cdot\mathcal O(t^{n+1})$ 
for any initial data $y_0\in\mathbb R^N$.
We naturally apply the numerical scheme $\hat\varphi_t^{\mathrm{ms}}$
successively over a suitably fine discretization of the global time interval
of integration to obtain a suitably accurate numerical approximation. This 
means we need to determine the global order of convergence for such
a scheme. Milstein's Theorem \cite{Mil,MilBook} provides a mechanism for 
inferring global convergence estimates from local ones in the case of 
drift-diffusion equations. This can be extended to L\'evy driven 
equations and is provided in Theorem~\ref{mt conv} in \textsection\ref{sec:glob}. 
We call it the Generalized Milstein Theorem. To apply this theorem we
require local expectation estimates for $R_t^{\mathrm{ms}}(y_0)$.
We find that
\begin{equation*}
E\bigl(R_t^{\mathrm{ms}}(y_0)\bigr)
=\sum_{k\geq\lfloor n/2\rfloor+1}\frac{t^{k}}{k!}\tilde V_{0^k}(y_0).
\end{equation*}
Here we used that $E\bigl(I_w(t)\bigr)$ is only non-zero for words $w\in\{0\}^*$. 
The notation $0^k$ denotes such a word $w$ of length $k$, and $\mathrm{g}^{\mathrm{ms}}(0^k)=2k$. 
Using the linear growth estimates we observe for some constant $K>0$ we have
\begin{equation*}
\Bigl|E\bigl(R_t^{\mathrm{ms}}(y_0)\bigr)\Bigr|
\leq K\,(1+|y_0|^2)^{1/2}\,\Biggl(\sum_{k\geq\lfloor n/2\rfloor+1}\frac{t^{k}}{k!}\Biggr).
\end{equation*}
For any finite $t$ the sum is convergent. In particular for small $t$ the upper bound
is $\mathcal O(t^{\lfloor n/2\rfloor+1})$. Recall from above that 
$\|R_t^{\mathrm{ms}}(y_0)\|_{L^2}^2=(1+|y_0|^2)\cdot\mathcal O(t^{n+1})$. 
We now refer to the Generalized Milstein Theorem~\ref{mt conv} in \textsection\ref{sec:glob}.
Matching parameters we see that $p_1=\lfloor n/2\rfloor+1$ and $p_2=(n+1)/2$.
The theorem states that the approximation $\hat\varphi_t^{\mathrm{ms}}$
with remainder $R_t^{\mathrm{ms}}$ above will converge globally at rate $p_2-1/2$
if $p_1\geq p_2+1/2$. While this is true for when $n$ is even, it does \emph{not} hold
when $n$ is odd. This is simply due to the fact that pure deterministic terms in 
the stochastic Taylor series remainder have a whole integer less 
root mean-square global order of convergence compared to their local order of convergence.
To rectify this we can simply modify our scheme $\hat\varphi_t^{\mathrm{ms}}$
to 
\begin{equation*}
\hat\varphi_t^{\mathrm{ms}}
=\sum_{\mathrm{g}^{\mathrm{ms}}(w)\leq n}I_w(t)\tilde V_w+I_{0^{n^*}}(t)\tilde V_{0^{n^*}},
\end{equation*}
where $n^*\coloneqq\lfloor(n+1)/2\rfloor$ if $n$ is odd and zero if it is even.
This means that the leading order deterministic term in $R_t^{\mathrm{ms}}$
is of the same order as previously when $n$ is even, but is of order
$\lfloor(n+1)/2\rfloor+1=\lfloor(n+3)/2\rfloor$ when $n$ is odd. By inspection
we observe that $p_1\geq p_2+1/2$ is now satisfied. The modified scheme $\hat\varphi_t^{\mathrm{ms}}$
above has mean-square global order of convergence $n$ and the terms included
exactly match those specified in Platen \& Bruti--Liberati~\cite[p.~290]{PlaBl}.

We now consider word length grading which we employ for our main result 
in the next section.
\begin{definition}[Word length grading]
For a given word $w\in\mathbb{A}^*$, the word length grading is denoted 
$|w|$ or $\mathrm{g}^{\mathrm{wl}}$, 
and defined to be the number of letters in $w$.
\end{definition}
\begin{remark}[Computational effort]
The bulk of the computation effort, when implementing accurate strong 
numerical schemes derived from truncations of series representations of the flowmap, 
is associated with the simulation of the iterated integrals $I_w$.
See Lord, Malham \& Wiese~\cite{LMW} and Malham \& Wiese~\cite{MW} 
for more details in the drift-diffusion case.
In particular the iterated integrals involving the most distinct 
non-deterministic letters require the most effort.
Hence the additional computational cost required to simulate all 
the iterated integrals $\{I_w\colon |w|\leq n\}$ as opposed to
the subset $\{I(w)\colon\mathrm{g}^{\mathrm{ms}}(w)\leq n\}$, is minimal. 
\end{remark}
One benefit of truncating to word length as opposed to mean-square grading is 
the following.
\begin{theorem}[Mean-square versus word length graded truncations]\label{th:msvswo}
Let $R_t^{\mathrm{wl}}(y_0)$ and $R_t^{\mathrm{ms}}(y_0)$ denote
the remainders generated by truncating the separated stochastic Taylor expansion 
for $y_t$ respectively using the mean-square and word length gradings, 
both truncated at the same given grade $n$. Then at leading order for all $n$ we have
\begin{equation*}
\bigl\|R_t^{\mathrm{wl}}(y_0)\bigr\|_{L^2}^2\leqslant\bigl\|R_t^{\mathrm{ms}}(y_0)\bigr\|_{L^2}^2.
\end{equation*}
\end{theorem}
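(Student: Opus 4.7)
The plan is to decompose $R_t^{\mathrm{ms}}$ into the word-length remainder plus a ``middle'' piece that captures words detected by mean-square grading but not by word length, expand the $L^2$ norm squared, and show that the cross term vanishes at leading order in $t$. The difference $\|R_t^{\mathrm{ms}}\|_{L^2}^2-\|R_t^{\mathrm{wl}}\|_{L^2}^2$ will then reduce to the norm squared of the middle piece, which is manifestly non-negative.

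Concretely, because $\mathrm{g}^{\mathrm{ms}}(w)\geq|w|$ for every word, the projectors satisfy $\pi_{\mathrm{g}^{\mathrm{ms}}\geq n+1}=\pi_{|\cdot|\geq n+1}+\pi_{\mathrm{mid}}$, where $\pi_{\mathrm{mid}}$ singles out the words with $|w|\leq n$ and $\mathrm{g}^{\mathrm{ms}}(w)\geq n+1$, i.e.\ those containing at least one zero letter. Writing $R_t^{\mathrm{mid}}$ for the corresponding remainder piece, we obtain $R_t^{\mathrm{ms}}=R_t^{\mathrm{wl}}+R_t^{\mathrm{mid}}$ and
\begin{equation*}
\|R_t^{\mathrm{ms}}(y_0)\|_{L^2}^2=\|R_t^{\mathrm{wl}}(y_0)\|_{L^2}^2+2\bigl\langle R_t^{\mathrm{wl}}(y_0),R_t^{\mathrm{mid}}(y_0)\bigr\rangle_{L^2}+\|R_t^{\mathrm{mid}}(y_0)\|_{L^2}^2.
\end{equation*}
Using the word-to-integral isomorphism and Definition~\ref{expectation}, every $L^2$ pairing reduces to a sum of terms $E(u\ast v)\,\tilde V_u(y_0)^{\mathrm T}\tilde V_v(y_0)$, and by Lemma~\ref{MSG lemma} such a pairing is a non-zero multiple of $t^{(\mathrm{g}^{\mathrm{ms}}(u)+\mathrm{g}^{\mathrm{ms}}(v))/2}$ precisely when $\mathrm{red}(u)=\mathrm{red}(v)$, vanishing otherwise. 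Since each word in either support has $\mathrm{g}^{\mathrm{ms}}\geq n+1$, the leading $t^{n+1}$ contribution to any of the three terms arises only from pairs with $\mathrm{g}^{\mathrm{ms}}(u)=\mathrm{g}^{\mathrm{ms}}(v)=n+1$ and a common reduced word $r$.

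The crucial step is to show that the cross term carries no $t^{n+1}$ contribution. For such a leading-order pair, writing $\zeta(\cdot)$ for the zero-letter count and observing that $\mathrm{red}(u)=\mathrm{red}(v)=r$ forces $\xi(u)=\xi(v)=|r|$, we have $\mathrm{g}^{\mathrm{ms}}(u)=|r|+2\zeta(u)=n+1$ and $|u|=|r|+\zeta(u)$, and likewise for $v$. If $u$ lies in the support of $R_t^{\mathrm{wl}}$, then $|u|\geq n+1$ combined with the displayed identity forces $\zeta(u)=0$ and hence $|r|=n+1$. Applying the same identities to $v$ then yields $\zeta(v)=0$ and $|v|=n+1$, contradicting the requirement $|v|\leq n$ imposed by $v$ being in the support of $R_t^{\mathrm{mid}}$. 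Hence no pair contributes at order $t^{n+1}$ and the cross term is of strictly smaller order. Consequently, at leading order, $\|R_t^{\mathrm{ms}}(y_0)\|_{L^2}^2-\|R_t^{\mathrm{wl}}(y_0)\|_{L^2}^2=\|R_t^{\mathrm{mid}}(y_0)\|_{L^2}^2\geq 0$, which is the required inequality. The main obstacle is exactly this parity-and-count argument that kills the cross term; everything else is a routine unpacking of the algebraic encoding together with Lemma~\ref{MSG lemma}.
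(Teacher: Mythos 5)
Your proposal is correct and follows essentially the same route as the paper: decompose the mean-square remainder as the word-length remainder plus the ``middle'' piece, expand the squared norm, and kill the cross term at leading order via Lemma~\ref{MSG lemma} by showing the zero-letter counts force $\mathrm{red}(u)\neq\mathrm{red}(v)$, leaving the non-negative norm of the middle piece. If anything, your version is slightly more explicit than the paper's in restricting attention to pairs with $\mathrm{g}^{\mathrm{ms}}(u)=\mathrm{g}^{\mathrm{ms}}(v)=n+1$ before running the counting argument, a step the paper leaves implicit.
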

\begin{proof}
First, recall the definitions for the mean-square and word length gradings. 
We observe for any word $w\in\mathbb A^*$ we have 
$\mathrm{g}^{\mathrm{ms}}(w)=2\zeta(w)+\xi(w)$ and $\mathrm{g}^{\mathrm{wl}}(w)=\zeta(w)+\xi(w)$.
Hence we have $\mathrm{g}^{\mathrm{wl}}\leq\mathrm{g}^{\mathrm{ms}}$. This 
implies that for any alphabet $\mathbb A$ constructed from at least one deterministic 
and one non-deterministic letter, we have 
$\pi_{\mathrm{g}^{\mathrm{ms}}\leq n}\bigl(\mathbb A^*\bigr)
\subset\pi_{\mathrm{g}^{\mathrm{wl}}\leq n}\bigl(\mathbb A^*\bigr)$.
Thus correspondingly for their complements 
$\pi_{\mathrm{g}^{\mathrm{wl}}\geq n+1}\bigl(\mathbb A^*\bigr)
\subset\pi_{\mathrm{g}^{\mathrm{ms}}\geq n+1}\bigl(\mathbb A^*\bigr)$.
Second, let $R^{\mathrm{ms}}$ and $R^{\mathrm{wl}}$ denote the remainder \emph{endomorphisms}
associated with truncating the stochastic Taylor expansion by mean-square
and word length gradings, respectively. Hence for the inner products 
$\langle R^{\mathrm{ms}}, R^{\mathrm{ms}}\rangle$ and 
$\langle R^{\mathrm{wl}}, R^{\mathrm{wl}}\rangle$ we sum over words
in $\pi_{\mathrm{g}^{\mathrm{ms}}\geq n+1}\bigl(\mathbb A^*\bigr)$ and 
$\pi_{\mathrm{g}^{\mathrm{wl}}\geq n+1}\bigl(\mathbb A^*\bigr)$, respectively.
The difference remainder endomorphism $\hat R\coloneqq R^{\mathrm{ms}}-R^{\mathrm{wl}}$
is at leading order non-zero on words in 
$\pi_{\mathrm{g}^{\mathrm{ms}}=n+1}\bigl(\mathbb A^*\bigr)
\cap\pi_{\mathrm{g}^{\mathrm{wl}}\leq n}\bigl(\mathbb A^*\bigr)$.
For any word $w$ in this set we have $2\zeta(w)+\xi(w)=n+1$ and $\zeta(w)+\xi(w)\leq n$,
which imply $\zeta(w)>0$ and thus $\xi(w)<n+1$. 
Now consider the inner product $\langle R^{\mathrm{wl}},\hat R\rangle$.
Any word $u$ on which $R^{\mathrm{wl}}$ is non-trivial at leading order has
length $\zeta(u)+\xi(u)\geq n+1$. Any word $v$ on which $\hat R$ is non-trivial at leading order,
we have just shown that $\xi(v)<n+1$. Hence we must have $\xi(u)\neq \xi(v)$ and so 
$\mathrm{red}(u)\neq\mathrm{red}(v)$. We deduce that $\langle R^{\mathrm{wl}},\hat R\rangle$ is zero.
The result then follows from the relation 
$\|R^{\mathrm{ms}}\|^2=\|R^{\mathrm{wl}}\|^2+2\langle\hat R, R^{\mathrm{wl}}\rangle+\|\hat R\|^2$.
This result holds whether we include the $0^{n^*}$ in $\hat\varphi_t^{\mathrm{ms}}$ or not.
\end{proof}

\section{Antisymmetric sign reverse integrator}\label{sec:ASRI}
We now present the error analysis of map-truncate-invert schemes, 
concluding with the description of the antisymmetric sign reverse integrator 
and the main theorem concerning its efficiency. We also
present the direct map-truncate-invert schemes alluded to at the 
end of \textsection3.
We begin by introducing the pre-remainder endomorphism associated 
to any map-truncate-invert scheme, that is the remainder terms associated 
to the truncation of the series $F^{\star}(\id)$. 
We relate the pre-remainder and remainder of such schemes 
and use this relation to compute the inverse step of a direct map-truncate-invert scheme. 
We follow with the introduction of the antisymmetric sign reverse integrator 
and its explicit characterization. We then prove our main theorem 
on the efficiency of the antisymmetric sign reverse integrator.
\begin{definition}[Pre-remainder endomorphism]
Let $F\colon\mathrm{End}(\mathbb{R}\langle\mathbb{A}\rangle_*)
\rightarrow\mathrm{End}(\mathbb{R}\langle\mathbb{A}\rangle_*)$
be an invertible map and suppose $\pi_{\g\leq n}$ is a projection corresponding to a truncation. 
The pre-remainder endomorphism $Q$ associated 
to $\pi_{\g\leq n}\circ F(\id)$ is defined by
\begin{equation*}
Q\coloneqq F(\id)-\pi_{\g\leq n}\circ F(\id).
\end{equation*} 
\end{definition}
This definition allows for more general maps $F$ than power series in the convolution algebra, 
which we require presently. The relationship between the pre-remainder and remainder 
is critical to the error analysis of map-truncate-invert schemes. 

Hereafter we assume a grading $\mathrm{g}$ which is preserved by 
the quasi-shuffle product in question. By this we mean that 
for all words $u,v\in\mathbb{A}^*$, the polynomial $u*v$ is homogeneous of degree 
$\mathrm{g}(u)+\mathrm{g}(v)$, i.e.\/ it is a sum of words of grade 
$\mathrm{g}(u)+\mathrm{g}(v)$; see Ebrahimi--Fard \textit{et al.\/} \cite{EfLMMkW}.
In this case we observe that if $\pi$ represents one of the projectors 
$\pi_{\g\leq n}$, $\pi_{\g=n}$ or $\pi_{\g\geq n}$
according to such a grading $\mathrm{g}$,
then $\pi\circ \sJ^{\star k}=\sJ^{\star k}\circ \pi$ 
for any $k\in\mathbb N$. 
\begin{remark}[Gradings preserved by quasi-shuffles]
The power bracket grading introduced in 
Curry \textit{et al.\/} \cite[Section~4]{CEfMW}, 
defined on uncompensated brackets, is grading preserving for any quasi-shuffle. 
This assigns the value 2 to the letter $0$, the value $1$ to letters $1,\ldots,\ell$, 
and the sum of the gradings for the quadratic covariation bracket between any two words. 
Mean-square grading is preserved for drift-diffusions, however it is
not preserved in general for any quasi-shuffle containing discontinuous terms. 
Importantly, word length grading $\mathrm{g}^{\mathrm{wl}}$ is preserved when the 
quasi-shuffle product is in fact the shuffle product. \emph{Importantly}, 
this latter case will underlie our eventual application below.
\end{remark}
\begin{lemma}[Remainder and pre-remainder relation]\label{prerem conv lemm}
Suppose $f(1+x)=\sum_{k\geq 1}c_k x^k$ is a power series with inverse 
$f^{-1}(x)=1+\sum_{k\geq 1}b_kx^k$, where 
$b_1\equiv1/c_1$.
Let $\pi_{\g\leq n}$ be the projection according to
the grading preserved by the quasi-shuffle product.
Let $R$ and $Q$ be the remainder and pre-remainder, respectively, 
associated to the map-truncate-invert endomorphism 
$F^{-1}\circ\pi_{\g\leq n}\circ F^{\star}(\id)$. Then we have 
\begin{equation*}
R=\frac{1}{c_1}Q+\mathrm{h.o.t.}.
\end{equation*}
\end{lemma}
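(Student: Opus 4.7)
The plan is to expand $R$ around the identity $F^{-1}(F^\star(\id)) = \id$ from Proposition~\ref{conv ps} and isolate the contribution of the leading grade. Set $\Phi \coloneqq F^\star(\id) = \sum_{k\geq 1} c_k \sJ^{\star k}$ and $P \coloneqq \pi_{\g\leq n}\Phi$, so that $Q = \Phi - P$. Since $F^{-1}(\Phi) = \id$,
\[ R = F^{-1}(\Phi) - F^{-1}(P) = \sum_{k\geq 1} b_k \bigl(\Phi^{\star k} - P^{\star k}\bigr). \]
The goal reduces to showing that, at the leading nontrivial grade, only the $k=1$ term of this sum contributes, with coefficient $b_1 = 1/c_1$.

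Next I would exploit the hypothesis that $\g$ is preserved by the quasi-shuffle, so that, as already recorded in the excerpt, $\pi_{\g\leq n}$ commutes with every $\sJ^{\star k}$. This forces $P(w) = \Phi(w)$ whenever $\mathrm{g}(w)\leq n$ and $P(w) = 0$ otherwise; in particular $R(w) = Q(w) = 0$ on words of grade at most $n$, and the identity holds trivially there. The heart of the argument is the leading grade $n+1$. Because $\mathrm{g}$ is additive under concatenation and every nonempty word has positive grade, any partition $w = u_1\cdots u_k$ into nonempty parts with $k\geq 2$ satisfies $\sum_i \mathrm{g}(u_i) = n+1$ with each $\mathrm{g}(u_i)\geq 1$, forcing $\mathrm{g}(u_i)\leq n$ for every $i$. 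Hence $P(u_i) = \Phi(u_i)$ for each $i$, which summed over partitions yields $P^{\star k}(w) = \Phi^{\star k}(w)$ for all $k\geq 2$; only the $k=1$ term differs, with $P(w) = 0$ and $\Phi(w) = Q(w)$.

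Plugging these identities into the series for $R$, every $k\geq 2$ contribution cancels at words of grade $n+1$, leaving $R(w) = b_1 Q(w) = (1/c_1) Q(w)$, which is the claimed leading order equality. For words of grade $\geq n+2$ the partition argument breaks down—some $u_i$ may have grade exceeding $n$, so $P(u_i)\neq \Phi(u_i)$—and the residual differences $\Phi^{\star k}(w) - P^{\star k}(w)$ become the higher order contributions absorbed into $\mathrm{h.o.t.}$. I expect the main obstacle to be less substantive than bookkeeping: one must verify that the gradings relevant to the subsequent applications, principally the word length grading, genuinely satisfy all three structural properties invoked above—preservation by the quasi-shuffle, additivity under concatenation, and strict positivity on nonempty words—so that the grade $n+1$ cancellation proceeds exactly as described.
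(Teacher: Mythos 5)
Your proposal is correct and takes essentially the same route as the paper: both write $R=\sum_{k\geq1}b_k\bigl((F^{\star}(\id))^{\star k}-P^{\star k}\bigr)$ with $P=\pi_{\g\leq n}\circ F^{\star}(\id)$, $Q=\pi_{\g\geq n+1}\circ F^{\star}(\id)$, and use preservation of the grading (plus the fact that $P$, $Q$ kill the empty word) to isolate the $b_1Q=Q/c_1$ term at grade $n+1$. The only cosmetic difference is that you cancel all $k\geq2$ terms directly on words of grade $n+1$ via $\Phi^{\star k}=P^{\star k}$ there, while the paper argues that the cross terms such as $P\star Q$ and $Q\star P$ annihilate all words of grade below $n+2$; these amount to the same observation.
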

\begin{remark}[Higher order terms] 
The notation `$\mathrm{h.o.t.}$' in Lemma~\ref{prerem conv lemm} 
is used to denote higher order terms in the following sense. Suppose our
goal is to apply the endomorphism $G^\star(\id)=C_1\sJ+C_2\sJ^{\star2}+C_3\sJ^{\star3}+\cdots$
to all words $w$ with $\mathrm{g}(w)\leq n$. 
If we write $G^\star(\id)=C_1\sJ+\cdots+C_n\sJ^{\star n}+\mathcal O(H)$,
this implies the endomorphism $H$ annihilates all words $w$ with $\mathrm{g}(w)\leq n$.
In this sense the term(s) in $\mathcal O(H)$ represent higher order terms.
For example, suppose for a particular word $w$ we have $\mathrm{g}(w)=n$.
The term $\sJ^{\star(n+1)}$ splits $w$ into a sum of all its possible non-empty 
$(n+1)$-partitions quasi-shuffled together of the form $w_1*w_2*\cdots*w_{n+1}$. 
Since the grading is preserved by the quasi-shuffle product, we have 
$\mathrm{g}(w_1)+\cdots+\mathrm{g}(w_{n+1})=n$. However since 
the partitions $w_i$ for all $i=1,\ldots,n+1$ are all non-empty so that
$\mathrm{g}(w_i)\geq1$, we have a contradiction. By convention
we suppose $\sJ^{\star(n+1)}$ annihilates such a word $w$ and so $\sJ^{\star(n+1)}$
represents a higher order term in the sense we have outlined.
\end{remark}
\begin{proof}
We set $P\coloneqq \pi_{\g\leq n}\circ F^{\star}(\id)$. 
Then we see the pre-remainder $Q=\pi_{\g\geq n+1}\circ F^{\star}(\id)$ and 
thus also $P+Q=F^{\star}(\id)$. We observe that since $R=F^{-1}(P+Q)-F^{-1}(P)$
we find 
\begin{equation*}
R=\sum_{k\geq 1} b_k\,\bigl((P+Q)^{\star k}-P^{\star k}\bigr)
=b_1\,Q+b_2\,(P\star Q+Q\star P)+\mathrm{h.o.t.}.
\end{equation*} 
Then we have
$P\star Q=\bigl(\pi_{\g\leq n}\circ F^{\star}(\id)\bigr)
\star\bigl(\pi_{\g\geq n+1}\circ F^{\star}(\id)\bigr)$.
Since $F^{\star}(\id)=c_1\,\sJ+\mathrm{h.o.t.}$ we get
$P\star Q=\bigl(c_1\,\pi_{\g\leq n}\circ \sJ\bigr)
\star\bigl(\pi_{\g\geq n+1}\circ(c_1\,\sJ+\cdots+c_{n+1}\,\sJ^{\star(n+1)})\bigr)+\mathrm{h.o.t.}$.
Hence we deduce that $P\star Q$ annihilates any words of grade less than $n+2$.
Thus the term $b_2 P\star Q$, as well as similarly $b_2\, Q\star P$, represent
higher order terms. Using that $b_1\equiv1/c_1$ gives the result.
\end{proof}
A similar calculation was used to establish efficiency of the sinhlog integrator 
in Ebrahimi--Fard \textit{et al.\/} \cite{EfLMMkW}. 
We use it to compute the inverse step of a map-truncate-invert scheme as follows.
\begin{corollary}[Direct map-truncate-inverse schemes]\label{inv corollary}
Suppose  $f(1+x)=\sum_{k\geq 1}c_k x^k$ is an invertible power series
and let $\pi_{\g\leq n}$ be a truncation according to the grading preserved 
by the quasi-shuffle product. Then $F^{-1}\circ \pi_{\g\leq n}\circ F^{\star}(\id)$,
up to higher order terms, is given by
\begin{equation*}
\pi_{\g\leq n}+\pi_{\g=n+1}\circ\Bigl(\sJ-\frac{1}{c_1}F^\star(\id)\Bigr).
\end{equation*}
\end{corollary}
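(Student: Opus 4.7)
The statement is essentially a bookkeeping consequence of Lemma~\ref{prerem conv lemm}, so the plan is to unpack that lemma carefully and rewrite the pre-remainder in terms of the projections $\pi_{\g\leq n}$ and $\pi_{\g=n+1}$. I first note that, by construction, the endomorphism $F^\star(\id)=\sum_{k\geq 1}c_k\,\sJ^{\star k}$ is grading-preserving: indeed $\sJ$ preserves grading (it agrees with $\id$ on non-empty words and kills the empty one), and since the quasi-shuffle preserves the grading $\mathrm{g}$ by assumption, each convolution power $\sJ^{\star k}$ also preserves it. This is the key compatibility that lets projections $\pi_{\g\leq n}$, $\pi_{\g=n+1}$, $\pi_{\g\geq n+2}$ interact cleanly with $F^\star(\id)$.

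Starting from Lemma~\ref{prerem conv lemm}, write
\begin{equation*}
F^{-1}\circ\pi_{\g\leq n}\circ F^{\star}(\id)=\id-R=\id-\frac{1}{c_1}Q+\mathrm{h.o.t.},
\end{equation*}
where $Q=\pi_{\g\geq n+1}\circ F^{\star}(\id)$ and the h.o.t.\ annihilates every word of grade at most $n+1$ (this being precisely what was established in the proof of the lemma). Next I decompose $\pi_{\g\geq n+1}=\pi_{\g=n+1}+\pi_{\g\geq n+2}$. Because $F^{\star}(\id)$ preserves grading, $\pi_{\g\geq n+2}\circ F^{\star}(\id)$ annihilates all words of grade $\leq n+1$ and can therefore be absorbed into the higher order terms. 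Thus
\begin{equation*}
F^{-1}\circ\pi_{\g\leq n}\circ F^{\star}(\id)=\id-\frac{1}{c_1}\,\pi_{\g=n+1}\circ F^{\star}(\id)+\mathrm{h.o.t.}.
\end{equation*}

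Finally, I split $\id=\pi_{\g\leq n}+\pi_{\g=n+1}+\pi_{\g\geq n+2}$. The last piece is again a higher order term in the relevant sense, so it can be discarded. Moreover, since every word of grade $n+1$ is non-empty (the grading of the empty word is $0$), the augmented ideal projector $\sJ=\id-\nu$ acts as the identity on the range of $\pi_{\g=n+1}$, giving $\pi_{\g=n+1}=\pi_{\g=n+1}\circ\sJ$. Combining these observations yields
\begin{equation*}
F^{-1}\circ\pi_{\g\leq n}\circ F^{\star}(\id)=\pi_{\g\leq n}+\pi_{\g=n+1}\circ\Bigl(\sJ-\frac{1}{c_1}F^{\star}(\id)\Bigr)+\mathrm{h.o.t.},
\end{equation*}
which is exactly the claim.

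No single step poses a real obstacle: the work is entirely formal, and everything substantive was done in Lemma~\ref{prerem conv lemm}. The only point that needs a careful word is the compatibility of the h.o.t.\ bookkeeping with the corollary statement, i.e.\ confirming that ``higher order'' means the same thing in both (namely, annihilating words of grade $\leq n+1$), and the minor check that $\pi_{\g=n+1}\circ\sJ=\pi_{\g=n+1}$ via the non-emptiness of words of positive grade.
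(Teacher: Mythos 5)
Your proposal is correct and follows essentially the same route as the paper's proof: start from $\id-R$, invoke Lemma~\ref{prerem conv lemm}, use grade preservation of $F^\star(\id)$ to push the grade-$\geq n+2$ pieces into the higher order terms, and replace $\pi_{\g=n+1}$ by $\pi_{\g=n+1}\circ\sJ$ since $\sJ$ acts as the identity on non-empty words. The only difference is cosmetic ordering: the paper keeps $\pi_{\g\geq n+1}$ together and factors $\pi_{\g\geq n+1}\circ\bigl(\id-\tfrac{1}{c_1}F^\star(\id)\bigr)$ before restricting to grade $n+1$, whereas you split $\pi_{\g\geq n+1}=\pi_{\g=n+1}+\pi_{\g\geq n+2}$ at the outset.
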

\begin{proof}
Starting with the definition of the remainder endomorphism $R$, and then
subsequently using our results above, we observe
\begin{align*}
F^{-1}\circ \pi_{\g\leq n}\circ F^{\star}(\id)
&=\id-R\\
&=\pi_{\g\leq n}+\pi_{\g\geq n+1}-\frac{1}{c_1}Q+\mathrm{h.o.t.}\\
&=\pi_{\g\leq n}+\pi_{\g\geq n+1}\circ\Bigl(\id-\frac{1}{c_1}\,F^{\star}(\id)\Bigr)+\mathrm{h.o.t.}\\
&=\pi_{\g\leq n}+\pi_{\g=n+1}\circ\Bigl(\sJ-\frac{1}{c_1}\,F^{\star}(\id)\Bigr)+\mathrm{h.o.t.}.
\end{align*}
In the last step we used that $\sJ$ is the identity on non-empty words.
\end{proof}
\begin{remark}[Direct map-truncate-invert in practice]\label{mti just}
Note that using the power series representation for $F^{\star}(\id)$
the direct map-truncate-inverse scheme in Corollary~\ref{inv corollary} is given by
\begin{equation*}
\pi_{\g\leq n}-\frac{1}{c_1}\pi_{\g=n+1}\circ\bigl(c_2\sJ^{\star 2}+c_3\sJ^{\star 3}+\cdots\bigr)
=\pi_{\g\leq n}-\frac{1}{c_1}\bigl(c_2\sJ^{\star 2}+\cdots+c_{n+1}\sJ^{\star(n+1)}\bigr)\circ\pi_{\g=n+1}.
\end{equation*}
We observe in this formula the action of direct map-truncate-invert schemes.
We simulate the corresponding truncated stochastic Taylor scheme represented by 
$\pi_{\g\leq n}$ and add the corresponding additional terms shown. 
Note the additional terms $\sJ^{\star 2}$, $\sJ^{\star 3}$ and so forth only 
involve products over lower order multiple It\^o integrals that have already 
been simulated in $\pi_{\g\leq n}$.
For instance, for a word $w$ with $\mathrm{g}(w)=n+1$, the term $\sJ^{\star 2}(w)$ is the 
sum of all products of It\^o integrals of the form $I_uI_v$ with non-empty
words $u$ and $v$ such that $uv=w$ and using grade preservation, $\mathrm{g}(u)+\mathrm{g}(v)=n+1$. 
In particular $\mathrm{g}(u)\leq n$ and $\mathrm{g}(v)\leq n$, so $I_u$ and $I_v$ are already
included in $\pi_{\g\leq n}$. The question arises as to whether a given 
direct map-truncate-inverse scheme converges.
For the integrator we construct presently, this will be an automatic consequence of
the convergence of the corresponding stochastic Taylor integrator according to 
word length grading.
\end{remark}
We turn our attention to establishing our main result concerning 
an efficient integrator for equations driven by L\'{e}vy processes.
For convenience we introduce the following bracketing notation for
the bracket $[\,\cdot\,,\,\cdot\,]$ in the quasi-shuffle product. 
For letters $a$ we set $[a]\coloneqq a$, while for words 
$w=a_1\ldots a_k$ we set $[w]\coloneqq [a_1,[a_2,\ldots,[a_{k-1},a_k]\ldots]]$. 
The commutativity of the bracket means the order of the letters $a_i$
is irrelevant and then its associativity means that all $k$-fold brackets
are equivalent to the canonical form of left-to-right bracketing shown. 
\begin{definition}[Reversal, sign reversal 
and quasi-shuffle antipode endomorphisms $|S|$, $S$ and $\hat S$]
We define three endomorphisms on $\mathbb{R}\langle\mathbb{A}\rangle$ as follows. If 
$a_i\in\mathbb A$ for $i=1,\ldots,n$ are letters, then we define the:
(i) Reversal map: $|S|\colon a_1\ldots a_n\mapsto a_n\ldots a_1$;
(ii) Sign reversal map: $S\colon a_1\ldots a_n\mapsto (-1)^n a_n\ldots a_1$; and
(iii) Quasi-shuffle antipode: $\hat S\colon a_1\ldots a_n\mapsto(-1)^n\sum [u_1]\,[u_2]\ldots[u_k]$, 
where for the quasi-shuffle antipode the sum is over all possible factorizations of 
$a_n\ldots a_1=u_1u_2\ldots u_k$ into non-empty subwords $u_i$ for all $k=1,\ldots,n$.
\end{definition}
\begin{remark}[Antipode and quasi-shuffle Hopf algebra]
The quasi-shuffle antipode is the quasi-shuffle convolution reciprocal
map of the identity, i.e.\/ $\id\star\hat S=\hat S\star\id=\nu$. Indeed, 
with this in hand, the quasi-shuffle algebra $\mathbb{R}\langle\mathbb{A}\rangle_*$, 
with deconcatenation $\Delta$ as coproduct, is also a Hopf algebra. 
For more details, see Hudson~\cite{Hudson_1} and Hoffman~\cite{Hof}.
\end{remark}
\begin{remark}
For the shuffle algebra where the bracket $[\,\cdot\,,\,\cdot\,]$ is trivial, 
we have $\hat{S}=S$. 
\end{remark}
Malham \& Wiese \cite{MalWie} and Ebrahimi--Fard \textit{et al.\/} \cite{EfLMMkW} considered 
drift-diffusion equations interpreted in the Stratonovich sense---which 
corresponds to the shuffle product case. The scheme of main interest therein
was the map-truncate-invert integration scheme generated from the map $f=\mathrm{sinhlog}$.
The shuffle convolution power series associated to the series 
$f(1+x)=\mathrm{sinhlog}(1+x)=x+\frac{1}{2}\sum_{k\geq2} (-1)^{k-1} x^k$ 
is expressible in the form  $F^{\scriptsize\sh}(X)=\frac{1}{2}(X-X^{\scriptsize\sh(-1)})$. 
Here $\sh$ represents the convolution product corresponding to the shuffle product. 
In the shuffle case, $\id\sh\,S=S\sh\,\id=\nu$ and we therefore have the identity 
$\mathrm{sinhlog}^{\scriptsize\sh}(\id) = \frac{1}{2}(\id - S)$, 
and similarly $\mathrm{coshlog}^{\scriptsize\sh}(\id)=\frac{1}{2}(\id + S)$. 
The efficiency results of Malham \& Wiese \cite{MalWie} 
and Ebrahimi--Fard \textit{et al.\/} \cite{EfLMMkW} 
for the sinhlog integrator rely on the identity
$\langle\!\langle\pi_{\g=n}\circ\mathrm{sinhlog}^{\scriptsize\sh}(\id),
\pi_{\g=n}\circ\mathrm{coshlog}^{\scriptsize\sh}(\id)\rangle\!\rangle=0$.
Here $\langle\!\langle\,\cdot\,,\,\cdot\,\rangle\!\rangle$ is an inner product on 
$\mathrm{End}(\mathbb{R}\langle\mathbb{A}\rangle_{\scriptsize\sh})$ 
similar to $\langle\,\cdot\,,\,\cdot\,\rangle$
but where the underlying expectation map differs 
due to the use of Stratonovich integrals. 
The above result utilizes the shuffle algebra structure 
of multiple Stratonovich integrals with respect to 
continuous semimartingale integrators. 
To obtain an appropriate extension of the sinhlog integrator 
to L\'{e}vy-driven equations, we broaden our definition 
of map-truncate-invert schemes. 
In particular, we introduce the antisymmetric sign reverse integrator 
as follows.
\begin{definition}[Antisymmetric sign reverse integrator]
The antisymmetric sign reverse integrator is the direct map-truncate-invert scheme 
associated with $\frac{1}{2}(\id-S)$, truncating according to word length.
\end{definition}
\begin{remark} 
We cannot in general give an expression for $\frac{1}{2}(\id - S)$ as a convolution power series 
in $\sJ$ in any quasi-shuffle convolution algebra with non-trivial bracket. 
To see this, compare the action of $S$ and a map 
$\alpha \sJ+\beta \sJ^{\star 2}$ on a general word of length two. We see that
$S(ab) = ba$ and the identity 
$(\alpha \sJ + \beta \sJ^{\star 2})(ab) = (\alpha+\beta)ab+\beta ba+\beta[a,b]$.
Comparing these two expressions, we see that $-\alpha=\beta=1$, 
and that $[a,b]=0$ for all $a,b$. This implies that $[\,\cdot\,,\,\cdot\,]$ must be trivial. 
\end{remark}
The following representation generates a practical implementation of the antisymmetric 
sign reverse integrator. In particular, it includes the inverse step. 
\begin{lemma}[Direct representation of the antisymmetric sign reverse integrator]\label{direct rep}
The direct antisymmetric sign reverse integrator is given by 
\begin{equation*}
\sum_{|w|\leq n} I_w(t)\tilde{V}_w  
+\sum_{|w|=n+1}I_{\mathrm{coshlog}^{\scriptsize\sh}(w)}(t)\tilde{V}_w,
\end{equation*}
or equivalently
\begin{equation*}
\sum_{|w|\leq n} I_w(t)\tilde{V}_w  
+\sum_{|w|=n+1}\Big(I_{\chl(w)}(t) 
+\tfrac12I_{(S-\hat S)(w)}(t)\Big)\tilde{V}_w.
\end{equation*}
\end{lemma}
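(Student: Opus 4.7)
The plan is to unwind the definition of the direct antisymmetric sign reverse integrator and then perform a short algebraic identification. By definition this integrator is the direct map-truncate-invert scheme of Remark~\ref{mti just} associated with the endomorphism $\tfrac{1}{2}(\id-S)$, truncated according to word length. First I would fix the scaling constant: on any single letter $a\in\mathbb A$ we have $S(a)=-a$, so $\tfrac{1}{2}(\id-S)(a)=a$, which forces the leading coefficient to be $c_1=1$. Using also that $\sJ$ acts as the identity on non-empty words, the direct scheme is therefore encoded by the endomorphism
\begin{equation*}
\pi_{|\cdot|\leq n}+\pi_{|\cdot|=n+1}\circ\bigl(\sJ-\tfrac{1}{2}(\id-S)\bigr)
=\pi_{|\cdot|\leq n}+\pi_{|\cdot|=n+1}\circ\tfrac{1}{2}(\id+S).
\end{equation*}

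Next I would push this endomorphism through the algebra homomorphism $\mu\otimes\kappa$ of Proposition~\ref{prop encoding} to obtain the stochastic representation
\begin{equation*}
\sum_{|w|\leq n}I_w(t)\,\tilde V_w+\sum_{|w|=n+1}I_{\frac{1}{2}(w+S(w))}(t)\,\tilde V_w,
\end{equation*}
using linearity of $I$ in its word index. To match the first stated form I would identify $\tfrac{1}{2}(\id+S)=\mathrm{coshlog}^{\sh}(\id)$: this is immediate from $\mathrm{coshlog}(y)=\tfrac{1}{2}(y+y^{-1})$ together with the fact that the \emph{shuffle} convolution inverse of $\id$ is the signed reversal $S$, which is precisely what the $\sh$ superscript encodes (as recalled just above the definition of the antisymmetric sign reverse integrator).

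For the equivalent second form I would invoke the quasi-shuffle analogue of the same identity: the $\star$-convolution inverse of $\id$ is the quasi-shuffle antipode $\hat S$, so $\chl=\tfrac{1}{2}(\id+\hat S)$. Subtracting yields $\tfrac{1}{2}(\id+S)=\chl+\tfrac{1}{2}(S-\hat S)$, and substituting term-by-term under $\mu$ gives the second formula. The main obstacle is conceptual rather than computational: because $\tfrac{1}{2}(\id-S)$ is not itself a convolution power series in $\sJ$ in a quasi-shuffle algebra with non-trivial bracket (the remark directly preceding the lemma), Corollary~\ref{inv corollary} does not apply as a literal corollary of Lemma~\ref{prerem conv lemm}. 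Instead, the direct formula of Remark~\ref{mti just} has to be taken as the \emph{definition} of the direct scheme associated with $\tfrac{1}{2}(\id-S)$; with that in place the lemma reduces to the algebraic identifications above.
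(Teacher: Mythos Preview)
Your argument is correct and arrives at the same formulas via the same algebraic identities $\tfrac12(\id+S)=\mathrm{coshlog}^{\sh}(\id)$ and $\chl=\tfrac12(\id+\hat S)$. The one point of divergence is how you handle the obstacle you flag at the end. You resolve it by taking the formula of Remark~\ref{mti just} as the \emph{definition} of the direct scheme for $\tfrac12(\id-S)$. The paper instead resolves it by observing that $\tfrac12(\id-S)$ \emph{is} a convolution power series---namely $\mathrm{sinhlog}^{\sh}(\id)$---once one works in the shuffle (rather than quasi-shuffle) convolution algebra; since the shuffle product is the quasi-shuffle with trivial bracket and preserves word length, Corollary~\ref{inv corollary} applies verbatim with $F=\mathrm{sinhlog}^{\sh}$ and $c_1=1$, yielding $\pi_{|\cdot|\leq n}+\mathrm{coshlog}^{\sh}(\id)\circ\pi_{|\cdot|=n+1}$ directly. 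Your route is slightly more elementary; the paper's route makes clear that no ad hoc definitional extension is needed, since the map-truncate-invert machinery already covers this case in the shuffle setting.
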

\begin{proof}
The map $\frac{1}{2}(\id-S)$ is identified with $\mathrm{sinhlog}^{\scriptsize\sh}(\id)$. 
The shuffle product preserves the word length grading so $\pi_{\g=n+1}\circ S=S\circ\pi_{\g=n+1}$
and thus 
$\pi_{\g=n+1}\circ\mathrm{sinhlog}^{\scriptsize\sh}(\id)
=\mathrm{sinhlog}^{\scriptsize\sh}(\id)\circ\pi_{\g=n+1}$.  
Hence applying Corollary~\ref{inv corollary} with $F=\mathrm{sinhlog}^{\scriptsize\sh}$ and
using that $c_1=1$ in this case and 
the identity $\mathrm{sinhlog}^{\scriptsize\sh}(\id)+\mathrm{coshlog}^{\scriptsize\sh}(\id)=\id$, we have
\begin{align*}
\bigl(\mathrm{sinh}&\mathrm{log}^{\scriptsize\sh}\bigr)^{-1}
\circ \pi_{\g\leq n}\circ\mathrm{sinhlog}^{\scriptsize\sh}(\id)\\
=&\;\pi_{\g\leq n}+\bigl(\id-\mathrm{sinhlog}^{\scriptsize\sh}(\id)\bigr)\circ\pi_{\g=n+1}
+\mathrm{h.o.t.}\\
=&\;\pi_{\g\leq n}+\bigl(\mathrm{coshlog}^{\scriptsize\sh}(\id)\bigr)\circ\pi_{\g=n+1}
+\mathrm{h.o.t.}\\
=&\;\pi_{\g\leq n}+\bigl(\mathrm{coshlog}^{\star}(\id)\bigr)\circ\pi_{\g=n+1}
+\bigl(\mathrm{coshlog}^{\scriptsize\sh}(\id)-\mathrm{coshlog}^{\star}(\id)\bigr)\circ\pi_{\g=n+1}
+\mathrm{h.o.t.}\\
=&\;\pi_{\g\leq n}+\bigl(\mathrm{coshlog}^{\star}(\id)\bigr)\circ\pi_{\g=n+1}
+\tfrac12(S-\hat S)\circ\pi_{\g=n+1}+\mathrm{h.o.t.},
\end{align*}
using that $\mathrm{coshlog}^{\scriptsize\sh}(\id)=\frac12(\id+S)$
and $\chl=\frac12(\id+\hat S)$. Ignoring higher order terms, using the convolution algebra embedding
and applying $\mu\otimes\kappa$ then gives the desired result.
\end{proof}
\begin{remark}[Antisymmetric sign reverse integrator in practice]
The terms $\bigl(\mathrm{coshlog}^{\star}(\id)\bigr)\circ\pi_{\g=n+1}$
and $\frac12(S-\hat S)\circ\pi_{\g=n+1}$ correspond to polynomials
of words of lower word length $n$ or less. For the former term this
is straightforward following analogous arguments to those 
in Remark~\ref{mti just}---noting that the
projection operator $\pi_{\g=n+1}$ is applied before $\mathrm{coshlog}^{\star}(\id)$.
For the latter term, $(S-\hat S)\circ(a_1\ldots a_{n+1})$
consists of a sum of terms, each of which contains at least one bracket.
For example one term would be $(-1)^{n+1}[a_{n+1},a_n]a_{n-1}\ldots a_1$
which is a word of length $n$, and so forth. Hence the additional
terms $\bigl(\mathrm{coshlog}^{\star}(\id)\bigr)\circ\pi_{\g=n+1}$
and $\frac12(S-\hat S)\circ\pi_{\g=n+1}$ required for implementing the 
direct antisymmetric sign reverse integrator consist of lower
order It\^o integrals we have already simulated to construct the
stochastic Taylor integrator $\pi_{\g\leq n}$.
\end{remark}
\begin{lemma}\label{orthog lemma}
Let $\pi_{\g=n}$ be the projection according to the word length grading. 
Then for any $\mathbf{V}$, any words $u$ and $v$ and endomorphisms 
$X$ and $Y$ with $\pi=\pi_{\g=n}$, we have:
\begin{enumerate}
 \item $E\circ |S|\equiv E$;
 \item $|S|(u\ast v)\equiv \bigl(|S|(u)\bigr)\ast\bigl(|S|(v)\bigr)$;
 \item $\langle |S|\circ X,Y\rangle=\langle X,|S|\circ Y\rangle$; 
 \item $\langle X,Y\rangle=\langle|S|\circ X,|S|\circ Y\rangle$; 
 \item $\|\pi\circ S\|^2=\|\pi\circ\id\|^2$;
 \item $\bigl\langle\tfrac12(\id-S)\,,\,\tfrac12(\id+S)\bigr\rangle=0$.
\end{enumerate}
\end{lemma}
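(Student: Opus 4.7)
The plan is to establish the six claims in order, with each building on the previous. Parts (i) and (ii) are structural properties of the reversal map $|S|$; parts (iii) and (iv) derive self-adjointness and isometry properties of $|S|$ with respect to the inner product; parts (v) and (vi) follow essentially from (iv) together with the sign relation between $S$ and $|S|$.

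For (i), I would simply observe that $E$ is zero except on words of the form $0^k$, where it equals $t^k/k!$; since $|S|$ preserves both word length and the property of being in $\{0\}^*$, the equality $E\circ |S|\equiv E$ is immediate. For (ii), I would proceed by induction on $|u|+|v|$. The key observation is that, by commutativity of both $\ast$ and the bracket $[\,\cdot\,,\,\cdot\,]$, the inductive definition of the quasi-shuffle product admits an equivalent first-letter form
\begin{equation*}
au\ast bv = a(u\ast bv) + b(au\ast v) + [a,b](u\ast v).
\end{equation*}
Applying $|S|$ to the last-letter recursion for $ua\ast vb$ turns concatenations on the right into concatenations on the left and, after invoking the inductive hypothesis, yields exactly the first-letter recursion for $(a|S|(u))\ast(b|S|(v)) = |S|(ua)\ast|S|(vb)$.

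For (iii), I would rewrite $E(|S|(X(u))\ast Y(v)) = E\bigl(|S|(\,|S|(X(u))\ast Y(v)\,)\bigr)$ by (i), distribute the outer $|S|$ via (ii), and cancel one copy of $|S|$ against the inner one to get $E(X(u)\ast|S|(Y(v)))$; summing against $(u,v)$ gives the result. For (iv), use (ii) once to write $|S|(X(u))\ast|S|(Y(v)) = |S|(X(u)\ast Y(v))$, then (i) to remove the outer $|S|$ from $E$, giving $\langle X,Y\rangle$.

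For (v), note that $S(w) = (-1)^{|w|}|S|(w)$, so $\pi\circ S = (-1)^n\,\pi\circ |S| = (-1)^n\,|S|\circ\pi$ (since $|S|$ preserves word length and hence commutes with $\pi$); squaring kills the sign and (iv) applied with $X=Y=\pi$ gives $\|\pi\circ S\|^2 = \|\pi\|^2 = \|\pi\circ\id\|^2$. For (vi), expanding bilinearly yields
\begin{equation*}
\Bigl\langle\tfrac12(\id-S),\tfrac12(\id+S)\Bigr\rangle
= \tfrac14\Bigl(\|\id\|^2 - \|S\|^2 + \langle\id,S\rangle - \langle S,\id\rangle\Bigr);
\end{equation*}
the cross terms cancel by symmetry of $\langle\,\cdot\,,\,\cdot\,\rangle$ (stemming from commutativity of $\ast$ and the symmetry $(u,v)=(v,u)$ of $\mathbf{V}$), and the diagonal terms cancel on each graded piece by (v) applied through the decomposition $\id = \sum_n \pi_{\g=n}$. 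The main obstacle I anticipate is the careful bookkeeping in (ii), where both the commutativity of $\ast$ and of $[\,\cdot\,,\,\cdot\,]$ are essential to convert between the last-letter and first-letter recursions; once (i) and (ii) are in place, the remaining items are essentially algebraic manipulation.
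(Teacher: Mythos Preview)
Your approach to (i)--(v) is correct and essentially identical to the paper's; for (ii) the paper simply cites Hoffman--Ihara for the automorphism property of $|S|$, and your explicit induction via the equivalent first-letter recursion is precisely the content of that reference. Items (iii)--(v) match the paper's argument line by line.

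The one genuine gap is in (vi). Your decomposition $\id=\sum_n\pi_{\g=n}$ and the claim that ``the diagonal terms cancel on each graded piece'' tacitly assumes that $\langle\pi_{\g=m},\pi_{\g=n}\rangle=0$ for $m\neq n$, so that $\|\id\|^2=\sum_n\|\pi_{\g=n}\|^2$ and likewise for $S$. That orthogonality is not established and in fact need not hold: by Lemma~\ref{MSG lemma}, $E(u\ast v)\neq0$ whenever $\mathrm{red}(u)=\mathrm{red}(v)$, and words of different length can share a reduced word (e.g.\ $u=01$, $v=1$). The fix is that item (vi), like item (v), is to be read with $\pi_{\g=n}\circ$ prefixed to each endomorphism---this is exactly how it is used in the proof of Theorem~\ref{main result}, where the paper explicitly notes that all endomorphisms carry an implicit $\pi\circ$. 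With that reading, (vi) follows directly from (v) and bilinearity (plus the symmetry of the inner product that you correctly invoke for the cross terms), and no decomposition over grades is needed.
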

\begin{remark}\label{rmk:oddn}
An immediate consequence of item~(iii) in Lemma~\ref{orthog lemma}
is that $|S|$ is self-adjoint with respect to the inner product. Further
if we combine this result with item~(i) then for any endomorphism $Z$ we have 
$\langle \id,Z\rangle=\langle |S|,Z\rangle~\Leftrightarrow~|S|\circ Z=Z$. 
\end{remark}
\begin{proof}
We prove each result item by item. Result (i) follows from the fact that
$E(w)=0$ unless $w\in\{0\}^*$, so we have  $E(w)=E\bigl(|S|(w)\bigr)$.
From Hoffman \& Ihara~\cite[Prop. 4.2]{HofIha} we know that 
the reversal map $|S|$ is an automorphism for any quasi-shuffle algebra,
which establishes result (ii). Result (iii) 
follows by direct computation as follows. For any endomorphisms 
$X$ and $Y$, results (ii) and then (i) imply 
$E\,\bigl((|S|\circ X)(u)*Y(v)\bigr)
=E\,\Bigl(|S| \bigl(X(u)*(|S|\circ Y)(v)\bigr)\Bigr)
=E\,\bigl(X(u)*(|S|\circ Y)(v)\bigr)$.
Using the definition of the inner product establishes the result. 
Result (ii) implies that  
$E\big(|S|(u)*|S|(v)\big)=E\big(|S|(u*v)\big)=E(u*v)$, giving result (iv).
Finally using this, for any words $u,v\in\mathbb A^*$ we have
$E\big((\pi_{\g=n}\circ S)(u)*(\pi_{\g=n}\circ S)(v)\big)
=(-1)^{2n}E\big(|S|(\pi_{\g=n}\circ u)*|S|(\pi_{\g=n}\circ v)\big)
=E\bigl((\pi_{\g=n}\circ u)*(\pi_{\g=n}\circ v)\bigr)$, giving result (v).
Result (vi) follows from (v) using the bilinearity of the inner product.
\end{proof}
Finally, our main result is as follows. We consider the following perturbed
version of the antisymmetric sign reverse integrator, $\tfrac12(\id-S)+\epsilon\,Z$, 
where $Z$ is any endomorphism satisfying $|S|\circ Z=Z$ on words of
length $n+1$ and $\epsilon$ is a real-valued parameter.
\begin{theorem}[Main result: Efficiency of the antisymmetric sign reverse integrator]
\label{main result}
Consider the flowmap of a stochastic differential system 
driven by independent L\'{e}vy processes with moments of all orders. 
Assume the flowmap possesses a separated stochastic Taylor expansion, 
and the vector fields are sufficiently smooth to ensure convergence 
of the truncated stochastic Taylor integration schemes to all orders. Then:

(a) The antisymmetric sign reverse integrator at a given truncation level 
$n$ is efficient in the sense that its local leading order mean-square errors are always smaller 
than those of the truncated stochastic Taylor scheme of the same order 
according to word length grading, independent of the driving vector fields 
and of the initial conditions, i.e.\/ at leading order we have
\begin{equation*}
\bigl\|R_t^{\mathrm{ASRI}}(y_0)\bigr\|_{L^2}^2\leqslant\bigl\|R_t^{\mathrm{wl}}(y_0)\bigr\|_{L^2}^2;
\end{equation*}

(b) When $n$ is odd the antisymmetric sign reverse approximation
is optimal in the following sense. If we perturb the antisymmetric sign reverse integrator
by the perturbations described above, then the difference between 
the two quantities shown in the inequality above is minimized when the perturbation is zero.
\end{theorem}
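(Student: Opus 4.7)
The plan is to translate both sides of the inequality into endomorphism norms on $\mathrm{End}(\mathbb{R}\langle\mathbb{A}\rangle_*)$ restricted to the top‐order projection $\pi_{\mathrm{g}=n+1}$, and then derive everything from the Pythagorean identity in Lemma~\ref{orthog lemma}(v)--(vi). First I would identify the two leading‐order remainder endomorphisms. For the stochastic Taylor scheme with word length grading, the remainder endomorphism is $\pi_{\mathrm{g}\geq n+1}$, whose leading (grade $n{+}1$) component is simply $\pi_{\mathrm{g}=n+1}$. For the antisymmetric sign reverse integrator, Lemma~\ref{prerem conv lemm} applied to $f=\mathrm{sinhlog}$ (so $c_1=1$) gives that the remainder $R^{\mathrm{ASRI}}$ equals the pre‐remainder $Q$ up to higher order terms. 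Since $S$ preserves word length, the leading part of $Q$ is $\pi_{\mathrm{g}=n+1}\circ\tfrac12(\id-S)$. The passage from endomorphism norms to $\|R_t(y_0)\|_{L^2}^2$ at leading order in $t$ is by construction of the inner product (Definition~\ref{inner product 2}) combined with Lemma~\ref{MSG lemma}, so the inequality reduces to comparing $\|\pi_{\mathrm{g}=n+1}\circ\tfrac12(\id-S)\|^2$ with $\|\pi_{\mathrm{g}=n+1}\|^2$.

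To obtain the first claim, I would write the trivial algebraic decomposition
\begin{equation*}
\pi_{\mathrm{g}=n+1}=\pi_{\mathrm{g}=n+1}\circ\tfrac12(\id-S)+\pi_{\mathrm{g}=n+1}\circ\tfrac12(\id+S).
\end{equation*}
By Lemma~\ref{orthog lemma}(v)--(vi), restricted to $\pi_{\mathrm{g}=n+1}$ the two summands are orthogonal in the endomorphism inner product (the orthogonality $\langle\tfrac12(\id-S),\tfrac12(\id+S)\rangle=0$ survives under $\pi_{\mathrm{g}=n+1}$ since $S$ preserves word length). Applying Pythagoras yields
\begin{equation*}
\|\pi_{\mathrm{g}=n+1}\|^2
=\|\pi_{\mathrm{g}=n+1}\circ\tfrac12(\id-S)\|^2+\|\pi_{\mathrm{g}=n+1}\circ\tfrac12(\id+S)\|^2,
\end{equation*}
from which $\|R^{\mathrm{ASRI}}\|^2\leq\|R^{\mathrm{wl}}\|^2$ at leading order follows immediately, independent of $\mathbf{V}$ and hence of the vector fields and initial data.

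For the optimality claim when $n$ is odd, I would analyse the perturbed pre‐remainder $\pi_{\mathrm{g}=n+1}\circ\bigl(\tfrac12(\id-S)+\epsilon Z\bigr)$ by expanding its squared norm as a quadratic in $\epsilon$:
\begin{equation*}
\|\pi_{\mathrm{g}=n+1}\circ\tfrac12(\id-S)\|^2
+2\epsilon\bigl\langle\pi_{\mathrm{g}=n+1}\circ\tfrac12(\id-S),\pi_{\mathrm{g}=n+1}\circ Z\bigr\rangle
+\epsilon^2\|\pi_{\mathrm{g}=n+1}\circ Z\|^2.
\end{equation*}
Minimizing the difference between the squared norms $\|R^{\mathrm{wl}}\|^2$ and $\|R^{\mathrm{ASRI}}_{\text{perturbed}}\|^2$ amounts to showing the linear term in $\epsilon$ vanishes. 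The key observation is that on words of length $n{+}1$ with $n$ odd, we have $S=(-1)^{n+1}|S|=|S|$. Hence $\tfrac12(\id-S)$ acts as $\tfrac12(\id-|S|)$ on the support of $\pi_{\mathrm{g}=n+1}$. Using the self‐adjointness of $|S|$ (Lemma~\ref{orthog lemma}(iii) together with Remark~\ref{rmk:oddn}) and the hypothesis $|S|\circ Z=Z$ on length $n{+}1$ words, we obtain $\langle|S|,Z\rangle=\langle\id,|S|\circ Z\rangle=\langle\id,Z\rangle$, and therefore the cross term $\tfrac12\langle\id,Z\rangle-\tfrac12\langle|S|,Z\rangle=0$ vanishes. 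The minimum of the quadratic in $\epsilon$ is thus achieved at $\epsilon=0$.

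The main obstacle in this plan is essentially bookkeeping: one must carefully justify that the higher order terms discarded via Lemma~\ref{prerem conv lemm} really do not interfere at leading order in the $L^2$ measure, and that the grade $n{+}1$ projection captures exactly the leading term (this follows from Lemma~\ref{MSG lemma}, since $E(u*v)$ scales as $t^{(\mathrm{g}^{\mathrm{ms}}(u)+\mathrm{g}^{\mathrm{ms}}(v))/2}$ and word length grading majorizes mean‐square grading from below). Beyond this, the proof is a direct orthogonality/Pythagoras argument powered entirely by Lemma~\ref{orthog lemma}, and the parity condition on $n$ enters in precisely one place: the identity $S=|S|$ on even‐length words, without which the perturbation hypothesis $|S|\circ Z=Z$ cannot be converted into orthogonality against $\id-S$.
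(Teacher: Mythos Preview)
Your proposal is correct and follows essentially the same approach as the paper. The paper performs a single expansion of $\|\pi_{\mathrm{g}=n+1}\circ\id\|^2$ via the decomposition $\id=Q_\epsilon+\tfrac12(\id+S)-\epsilon Z$ and then reads off both conclusions (setting $\epsilon=0$ for efficiency, then using Remark~\ref{rmk:oddn} to kill the linear term for optimality), whereas you separate the two: first a clean Pythagoras argument for efficiency, then a direct quadratic expansion of $\|\pi_{\mathrm{g}=n+1}\circ Q_\epsilon\|^2$ for optimality. The ingredients are identical---Lemma~\ref{orthog lemma}(iii),(v),(vi), Lemma~\ref{prerem conv lemm} with $c_1=1$, and the observation that $S=|S|$ on even-length words---and your organization is arguably slightly more transparent for the second claim, since minimizing $\|Q_\epsilon\|^2$ over $\epsilon$ is exactly what optimality asserts.
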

\begin{proof}
It suffices to show the leading order remainder endomorphism 
$\pi_{\g=n+1}\circ\id$ associated with the truncated stochastic Taylor expansion 
has greater norm than the remainder endomorphism $\pi_{\g=n+1}\circ\frac12(\id-S)$ 
associated with the antisymmetric sign reverse integrator, independent of $\mathbf{V}$. 
For convenience we set $Q_\epsilon\coloneqq\frac12(\id-S)+\epsilon\,Z$.
Using the identity $\id=Q_\epsilon+\tfrac12(\id+S)-\epsilon\,Z$, 
and the results of Lemma~\ref{orthog lemma}, setting $\pi=\pi_{\g=n+1}$ we have
\begin{align*}
\|\pi\circ\id\|^2
=&\;\Bigl\langle Q_\epsilon+\tfrac12(\id+S)
-\epsilon\,Z\,,\,Q_\epsilon+\tfrac12(\id+S)-\epsilon\,Z\Bigr\rangle\\
=&\;\bigl\|Q_\epsilon\bigr\|^2+2\bigl\langle Q_\epsilon,\tfrac12(\id+S)\bigr\rangle
-2\epsilon\langle Q_\epsilon\,,\,Z\rangle+\bigl\|\tfrac12(\id+S)\bigr\|^2\\
&\;-2\epsilon\bigl\langle\tfrac12(\id+S)\,,\,Z\bigr\rangle+\epsilon^2\|Z\|^2\\
=&\;\bigl\|Q_\epsilon\bigr\|^2+\bigl\|\tfrac12(\id+S)\bigr\|^2
+2\bigl\langle\tfrac12(\id-S)\,,\,\tfrac12(\id+S)\bigr\rangle
+2\epsilon\bigl\langle Z\,,\,\tfrac12(\id+S)\bigr\rangle\\
&\;-2\epsilon\bigl\langle\tfrac12(\id-S)\,,\,Z\bigr\rangle-2\epsilon^2\|Z\|^2
-2\epsilon\bigl\langle\tfrac12(\id+S)\,,\,Z\bigr\rangle+\epsilon^2\|Z\|^2\\
=&\;\bigl\|\pi\circ Q_\epsilon\bigr\|^2+\bigl\|\pi\circ\tfrac12(\id+S)\bigr\|^2
-\epsilon\bigl\langle\pi\circ\tfrac12(\id-S)\,,\,\pi\circ Z\bigr\rangle-\epsilon^2\|\pi\circ Z\|^2.
\end{align*}
Note that in all the intermediate calculations above all the endomorphisms shown, 
$Q_\epsilon$, $\id$, $S$ and $Z$ and so forth should be preceded by `$\pi\circ$'
which we left out for clarity. First we observe that when $\epsilon=0$ we conclude
that $\bigl\|\pi\circ\tfrac12(\id-S)\bigr\|^2\leq\|\pi\circ\id\|^2$, 
giving the first result. Second we observe that when $n$ is odd
with $\pi=\pi_{\g=n+1}$, then using that 
$\langle\id,Z\rangle=\langle|S|,Z\rangle~\Leftrightarrow~|S|\circ Z=Z$
from Remark~\ref{rmk:oddn} above, the linear term in $\epsilon$ on the right
above is zero. This establishes the second result of the theorem. 
\end{proof}
\begin{remark}[Characterization of the perturbations]\label{pertchar}
A wide range of endomorhpisms $Z$ satisfy the condition
$|S|\circ Z=Z$ on words of length $n+1$. 
For example the endomorphism $\sJ^{\diamond (n+1)}$,
where `$\diamond$' is the convolutional product associated with any
quasi-shuffle product such as the one above or simply the shuffle, 
satisfies the stated condition. Another example is $\frac12(\id+|S|)$.
\end{remark}
\begin{remark}
Using Theorem~\ref{th:msvswo} on mean-square versus word length graded 
truncations, the result above implies that the antisymmetric sign reverse integrator 
is more accurate than the corresponding stochastic Taylor approximation truncated 
according to mean-square grading.
\end{remark}
\begin{remark}[Global strong mean-square order of convergence]
Using Theorem~\ref{mt conv} from \textsection~\ref{sec:glob} below, 
we conclude that the antisymmetric sign reverse integrator in Theorem~\ref{main result}
converges with global strong mean-square order of convergence $n$. 
\end{remark}
The argument underlying our optimality claim for the antisymmetric sign
reverse integrator (or the direct antisymmetric sign reverse integrator)
when $n$ is odd is as follows. Consider the case when $n=1$ corresponding to
global order $1/2$ convergence. In this case the antisymmetric sign
reverse integrator coincides with the
Castell--Gaines integrator which we already know is an efficient integrator. 
The antisymmetric sign reverse integrator 
in this instance is effectively $(\nu+\sJ)(w)\equiv\id(w)$ 
on words of length $|w|=1$ but has a remainder consisting of words $w$
of length $|w|=2$ generated by the form $(\nu+\sJ-\frac12\sJ^{\sh 2})(w)$. 
It is natural to perturb this integrator by adding the term $\epsilon Z$
with $Z=\sJ^{\sh 2}$ which satisfies the condition $|S|\circ Z=Z$
on words of length $2$---our premise is not to perturb the term $\sJ$
which would affect the role of the single letters, i.e.\/ the role of
the individual processes, at this level. Then Theorem~\ref{main result} 
shows that $\epsilon=0$ gives the optimal efficient integrator, consistent
with the fact that it is an efficient integrator. Next we consider
the case $n=2$. In this case the antisymmetric sign reverse integrator is
$(\nu+\sJ-\frac12\sJ^{\sh 2})(w)$ on words of length $|w|\leqslant2$
with a remainder consisting of words $w$
of length $|w|=3$ generated by the form 
$(\nu+\sJ-\frac12\sJ^{\sh 2}+\frac12\sJ^{\sh 3})(w)$.
Since it is natural to ask/require our integrator to retain
its efficiency properties if we truncate it to generate the
corresponding integrator at the next order down, we do not
perturb $(\nu+\sJ-\frac12\sJ^{\sh 2})$ and consider perturbing
the integrator by adding the perturbation $\epsilon Z$
with $Z=\sJ^{\sh 3}$ which satisfies the required property for
words of length $3$. In this instance Theorem~\ref{main result}
tells that the optimally efficient integrator may be realized
for a non-zero value of $\epsilon$, however for $\epsilon=0$
it \emph{is} an efficient integrator independent of the underlying
stochastic system, so we stick with that form. Continuing to 
the case $n=3$, the antisymmetric sign reverse integrator is
$(\nu+\sJ-\frac12\sJ^{\sh 2}+\frac12\sJ^{\sh 3})(w)$
on words of length $|w|\leqslant3$ with remainder consisting of
words of length $4$ generated by
$(\nu+\sJ-\frac12\sJ^{\sh 2}+\frac12\sJ^{\sh 3}-\frac12\sJ^{\sh 4})(w)$.
Following the sequence of arguments we established for the previous case
implies we consider perturbing the antisymmetric sign
reverse integrator by $\epsilon\sJ^{\sh 4}$.
Theorem~\ref{main result} implies that $\epsilon=0$ gives the 
optimal efficient integrator, and so forth.

\begin{remark}[Modified direct antisymmetric sign reverse integrator]
The optimality result concerns odd values of $n$. In the case of even $n$, there is a small
modification of the direct antisymmetric sign reverse integrator 
which performs well in practical tests. Consider the coefficient
of $\tilde{V}_{a^{n+1}}$ for some letter $a$, i.e. the operators of leading order corresponding to words made up of 
only one letter. This coefficient is $\frac{1}{2}I_{(1 + (-1)^{n+1})a^{n+1}}$, which is equal to $I_{a^{n+1}}$
in the case of odd $n$, but is zero for even $n$. For odd $n$, the term is exactly that appearing
in the stochastic Taylor expansion of higher order, whilst for even $n$, this term vanishes completely. 
It is also possible in the case of even $n$ to generate the terms $I_{a^{n+1}}$ from iterated integrals 
of lower word order. In the modified direct antisymmetric sign reverse integrator scheme for even $n$, 
we add these additional terms, i.e.\/ the integrator is given by
\begin{equation*}
\sum_{|w|\leq n} I_w(t)\tilde{V}_w  
+\sum_{\substack{|w|=n+1 \\ w\neq a^{n+1}}}I_{\mathrm{coshlog}^{\scriptsize\sh}(w)}(t)\tilde{V}_w
+ \sum_{w=a^{n+1}} I_{a^{n+1}}(t)\tilde{V}_w.
\end{equation*}
Adding the additional terms produces a better approximation as, by 
Lemma~\ref{MSG lemma}, each $w=a^{n+1}$ is orthogonal to all other words, 
so reproducing the coefficient of every such $I_w$ in the exact remainder 
improves the accuracy of the scheme. As we do not require the 
simulation of additional iterated integrals, 
we guarantee higher accuracy for minimal extra computational effort.
\end{remark}

\section{Global convergence}\label{sec:glob} 
The error estimates derived in \textsection 4 are local estimates. 
We require a means of obtaining global convergence from local error estimates. 
In the case of drift-diffusion equations, such a mechanism was 
established by Milstein \cite{Mil,MilBook}. In this section we present 
a natural generalization of Milstein's theorem to equations driven by L\'{e}vy processes.  
The proof of Milstein's theorem requires the following bounds 
and continuity results for the exact flow. The proofs are standard and 
do not rely on the analysis of the previous sections; they  
are not reproduced here, for details see Situ \cite[p.~76--78]{Sit}, 
Fujiwara \& Kunita \cite[p.~84--86]{FujKun}, Applebaum \cite[p.~332--336]{App}. 
\begin{lemma}[Continuity and bounds for flows of L\'{e}vy-driven equations]\label{flow lemmas}
Let $\varphi_{s,t}$ be the flow of a L\'{e}vy-driven equation. 
For any $\mathcal F_s$-measurable $x,y\in L^2$, 
and $s<t$ in $[0,T]$, there exists a constant $K>0$ such that: 
(i) $\|\varphi_t(y)\|^2_{L^2}\leq K(1+\|y\|^2_{L^2})$;
(ii) $\|\varphi_{s,t}(y)-y\|^2_{L^2}\leq K(t-s)(1+\|y\|_{L^2}^2)$;
(iii) $\|\varphi_t(x)-\varphi_t(y)\|^2_{L^2}\leq e^{Kt}\|x-y\|^2_{L^2}$
and (iv) $\|\varphi_{s,t}(x)-\varphi_{s,t}(y)-(x-y)\|_{L^2}^2\leq K(t-s)\|x-y\|^2_{L^2}$.
\end{lemma}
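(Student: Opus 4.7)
The plan is to proceed from the integral-equation form of the L\'evy-driven SDE, applying the standard $L^2$ isometries for It\^o integrals against Wiener processes and for compensated Poisson integrals against $\overbar Q^i$, together with the Lipschitz and linear-growth hypotheses on the vector fields $V_i$, and then close each estimate with Gronwall's inequality. Throughout I write $\varphi_{s,t}(y)$ as
\begin{equation*}
\varphi_{s,t}(y)=y+\sum_{i=0}^d\int_s^t V_i(\varphi_{s,u}(y))\,\mathrm{d}W^i_u+\sum_{i=d+1}^{\ell}\int_s^t\int_{\mathbb R} vV_i(\varphi_{s,u_-}(y))\,\overbar Q^i(\mathrm{d}v,\mathrm{d}u),
\end{equation*}
and repeatedly use the key identity (a consequence of the compensated Poisson isometry and the assumption $\int v^2\,\rho^i(\mathrm{d}v)<\infty$)
\begin{equation*}
\Bigl\|\int_s^t\int_{\mathbb R} vH(u_-)\,\overbar Q^i(\mathrm{d}v,\mathrm{d}u)\Bigr\|_{L^2}^2=\Bigl(\int_{\mathbb R} v^2\,\rho^i(\mathrm{d}v)\Bigr)\int_s^t\|H(u)\|_{L^2}^2\,\mathrm{d}u.
\end{equation*}

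First, for (i), I would square the integral equation above with $s=0$, apply the elementary inequality $|\sum_k a_k|^2\leq(\ell{+}1)\sum_k|a_k|^2$, and bound each summand using the isometries. Linear growth of the $V_i$ produces an integrand of the form $K(1+\|\varphi_u(y)\|_{L^2}^2)$, so that $\|\varphi_t(y)\|_{L^2}^2\leq C(1+\|y\|_{L^2}^2)+C\int_0^t\|\varphi_u(y)\|_{L^2}^2\,\mathrm{d}u$, and Gronwall gives (i). For (ii), the same calculation applied to $\varphi_{s,t}(y)-y$ avoids the Gronwall step: using (i) to control $\|V_i(\varphi_{s,u}(y))\|_{L^2}^2$ by $K(1+\|y\|_{L^2}^2)$ uniformly on $[s,t]$ yields the $(t-s)$ factor directly.

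For (iii), I would subtract the integral equations for $\varphi_t(x)$ and $\varphi_t(y)$, apply the same $L^2$ isometries, and use the global Lipschitz property of the $V_i$ to obtain $\|\varphi_t(x)-\varphi_t(y)\|_{L^2}^2\leq\|x-y\|_{L^2}^2+K\int_0^t\|\varphi_u(x)-\varphi_u(y)\|_{L^2}^2\,\mathrm{d}u$, and then invoke Gronwall. For (iv), the same subtraction applied to $\varphi_{s,t}(x)-\varphi_{s,t}(y)-(x-y)$ cancels the initial term $x-y$; the resulting stochastic integrals are controlled by Lipschitz as in (iii), and the factor $(t-s)$ is now recovered by inserting the bound from (iii) inside the time integral rather than resorting to Gronwall again.

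The routine mechanics of (i)--(iii) are entirely standard; the only genuine care is required in (iv), where one must be careful not to re-apply Gronwall (which would only return (iii)) and instead plug (iii) into the integrand to extract the linear-in-$(t-s)$ factor. The secondary obstacle is the usual one for L\'evy systems, namely ensuring that the jump martingale terms are genuine $L^2$-martingales so the isometry is valid; this is where the standing hypothesis that all the L\'evy measures $\rho^i$ have finite moments of all orders (in particular second moments) is indispensable.
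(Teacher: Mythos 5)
Your proposal is correct and is essentially the approach the paper itself relies on: the paper gives no proof of Lemma~\ref{flow lemmas}, deferring to the standard references (Situ, Fujiwara \& Kunita, Applebaum), and those proofs proceed exactly as you sketch --- integral equation, It\^o and compensated-Poisson isometries (with Cauchy--Schwarz rather than an isometry for the drift term $i=0$), Lipschitz/linear-growth bounds, Gronwall for (i) and (iii), and insertion of (i), resp.\ (iii), into the integrand to extract the $(t-s)$ factor in (ii) and (iv). Your handling of (iv) and your remark on the second moments of the $\rho^i$ are the right points of care; only routine details (e.g.\ a localization argument to justify finiteness before applying Gronwall) are left implicit.
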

We now present the generalization of Milstein's theorem to L\'{e}vy-driven equations. 
Essentially, it states that any Markovian integration scheme for which 
the local mean-square error converges with order $p$, converges globally 
in the mean-square sense with order $p-1/2$, 
provided the expectation of the local error is at least of order $p+1/2$. 
\begin{theorem}[Generalized Milstein Theorem]\label{mt conv}
Let $\hat{\varphi}_{s,t}$ be an approximate flow defined for values $s\leq t$ 
such that $s, t\in\{h,2h,\ldots,T\}$, where $h$ is the step size of the scheme. 
Suppose that the expectation of the local error of the approximation is of order $p_1$, 
and that the local mean-square error is of order $p_2$, 
i.e.\/ for any $t<T$ there exists a constant $K$ such that
\begin{align*}
\Bigl|E\bigl(\varphi_{t,t+h}(y)-\hat{\varphi}_{t,t+h}(y)\bigr)\Bigr|&\leq K(1+|y|^2)^{1/2}h^{p_1},\\
\|\varphi_{t,t+h}(y)-\hat{\varphi}_{t,t+h}(y)\|_{L^2}&\leq K(1+|y|^2)^{1/2}h^{p_2},
\end{align*}
where $p_2\geq1/2$ and $p_1\geq p_2 + 1/2$. 
Suppose further that $\hat{\varphi}_{s,t}$ is independent of $\mathcal{F}_s$ for all $s<t$. 
Then the approximation converges globally to the exact flow 
with strong order $p_2 - 1/2$, 
i.e. there exists a constant $K$ such that for all $t\in\{h,2h,\ldots,T\}$ we have
$\|\varphi_t(y_0)-\hat{\varphi}_t(y_0)\|_{L^2}\leq K(1+\|y_0\|_{L^2}^2)^{1/2} h^{p_2-1/2}$.
\end{theorem}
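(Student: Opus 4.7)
The plan is to mimic Milstein's classical argument \cite{Mil,MilBook} for drift-diffusions, using the continuity/boundedness estimates from Lemma~\ref{flow lemmas} as the necessary L\'evy-driven input. Write $t_k = kh$ and let $e_k \coloneqq \varphi_{t_k}(y_0) - \hat{\varphi}_{t_k}(y_0)$. The starting point is the one-step telescoping decomposition
\begin{equation*}
e_{k+1} = \bigl[\varphi_{t_k,t_{k+1}}(\varphi_{t_k}(y_0)) - \varphi_{t_k,t_{k+1}}(\hat{\varphi}_{t_k}(y_0))\bigr] + \bigl[\varphi_{t_k,t_{k+1}}(\hat{\varphi}_{t_k}(y_0)) - \hat{\varphi}_{t_k,t_{k+1}}(\hat{\varphi}_{t_k}(y_0))\bigr] \eqqcolon A_k + B_k,
\end{equation*}
so that $A_k$ is the propagation of the accumulated error through the \emph{exact} flow and $B_k$ is the one-step local error evaluated at the numerical value $\hat{y}_k \coloneqq \hat{\varphi}_{t_k}(y_0)$.

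First I would deal with the auxiliary estimate $\|\hat{y}_k\|_{L^2}^2 \leq C(1 + \|y_0\|_{L^2}^2)$ uniformly in $k \leq T/h$; this follows from a short induction combining Lemma~\ref{flow lemmas}(i) with the local $L^2$-hypothesis $\|B_k\|_{L^2} \leq K(1+|\hat{y}_k|^2)^{1/2} h^{p_2}$ and a discrete Gronwall step. Next I would expand
\begin{equation*}
\|e_{k+1}\|_{L^2}^2 = \|A_k\|_{L^2}^2 + 2\,\mathbb{E}\,\langle A_k, B_k\rangle + \|B_k\|_{L^2}^2,
\end{equation*}
and bound each term. For $\|A_k\|_{L^2}^2$, Lemma~\ref{flow lemmas}(iv) applied with $x=\varphi_{t_k}(y_0)$, $y=\hat{y}_k$ gives $\|A_k\|_{L^2}^2 \leq (1+Kh)\|e_k\|_{L^2}^2$. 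For $\|B_k\|_{L^2}^2$, the local mean-square hypothesis together with the a priori bound on $\|\hat{y}_k\|_{L^2}$ yields $\|B_k\|_{L^2}^2 \leq C h^{2p_2}$.

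The delicate step, and the main obstacle, is the cross term $\mathbb{E}\,\langle A_k, B_k\rangle$: the naive Cauchy--Schwarz bound would only give $O(h^{p_2+1/2})$ per step, hence $O(h^{p_2-1/2})$ globally in $L^1$, losing the $L^2$-order. The trick is to exploit the Markov/independence assumption: $A_k$ and $\hat{y}_k$ are $\mathcal{F}_{t_k}$-measurable while $\hat{\varphi}_{t_k,t_{k+1}}$ is independent of $\mathcal{F}_{t_k}$, so conditioning on $\mathcal{F}_{t_k}$ gives
\begin{equation*}
\bigl|\mathbb{E}\,\langle A_k, B_k\rangle\bigr| \leq \mathbb{E}\Bigl(|A_k|\,\bigl|\mathbb{E}[B_k \mid \mathcal{F}_{t_k}]\bigr|\Bigr) \leq K h^{p_1}\,\|A_k\|_{L^2}\,(1+\|\hat{y}_k\|_{L^2}^2)^{1/2},
\end{equation*}
by the local weak hypothesis. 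A Young inequality $2ab \leq h\,a^2 + h^{-1}b^2$ now absorbs the first factor into the $(1+Kh)\|e_k\|_{L^2}^2$ contribution and leaves a residual of order $h^{2p_1 - 1} \leq h^{2p_2}$ thanks to the assumption $p_1 \geq p_2 + 1/2$.

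Putting the three bounds together yields the one-step recursion $\|e_{k+1}\|_{L^2}^2 \leq (1+C h)\|e_k\|_{L^2}^2 + C(1 + \|y_0\|_{L^2}^2) h^{2p_2}$. A standard discrete Gronwall argument iterated over $k = 0, 1, \ldots, T/h - 1$ (with $e_0 = 0$) gives $\|e_N\|_{L^2}^2 \leq C T\, e^{CT}(1+\|y_0\|_{L^2}^2)\,h^{2p_2 - 1}$, which is the desired global strong order $p_2 - 1/2$. The condition $p_2 \geq 1/2$ is what guarantees the exponent $2p_2 - 1$ is non-negative so the bound is meaningful. The only L\'evy-specific ingredient is Lemma~\ref{flow lemmas}; everything else is an algebraic/Gronwall manipulation that is insensitive to whether the driving noise is continuous or has jumps.
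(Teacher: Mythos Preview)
Your approach is exactly what the paper does: it does not give a detailed proof but simply observes that Milstein's original argument for Wiener-driven systems depends only on the local accuracy hypotheses together with the flow estimates of Lemma~\ref{flow lemmas}, and hence carries over verbatim to the L\'evy case. Your sketch is a faithful expansion of that argument.

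There is one slip in your treatment of the cross term. You assert that $A_k$ is $\mathcal{F}_{t_k}$-measurable, but it is not: $A_k=\varphi_{t_k,t_{k+1}}(y_k)-\varphi_{t_k,t_{k+1}}(\hat y_k)$ involves the exact flow over $[t_k,t_{k+1}]$ and hence depends on the driving noise on that interval. The standard remedy, which is what Milstein actually does, is to split $A_k=e_k+(A_k-e_k)$. The piece $e_k$ \emph{is} $\mathcal{F}_{t_k}$-measurable, so the conditioning argument you describe applies to $\mathbb{E}\langle e_k,B_k\rangle$ and delivers the $h^{p_1}$ gain. The residual cross terms $\mathbb{E}\langle A_k-e_k,B_k\rangle$ and $\mathbb{E}\langle e_k,A_k-e_k\rangle$ are then handled by Cauchy--Schwarz together with the bound $\|A_k-e_k\|_{L^2}^2\leq Kh\|e_k\|_{L^2}^2$ from Lemma~\ref{flow lemmas}(iv), yielding contributions of order $h^{p_2+1/2}\|e_k\|_{L^2}$ and $h\|e_k\|_{L^2}^2$ respectively, both of which are absorbed by the recursion exactly as you intend. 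With this correction your Gronwall step goes through unchanged.
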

The proof given in Milstein~\cite{Mil,MilBook} for equations driven 
by Wiener processes relies only on the local accuracy of the 
approximation and the continuity and growth bounds for the exact flow 
given in the preceding lemma. By Lemma~\ref{flow lemmas}, these properties 
of the exact flow hold for L\'{e}vy-driven equations, 
and hence the proof of Milstein immediately generalizes to give the above theorem. 
We remark that the assumption that $\hat{\varphi}_{s,t}$ is independent of 
$\mathcal{F}_s$ is required, as the local error estimates must hold across 
each computational interval $[s,t]$ where the expectation is taken 
with respect to $\mathcal{F}_s$. All the schemes we consider fulfil this property.

\section{Example integrators and numerical simulations}\label{sec:NS}

\begin{figure}[!h]
\includegraphics[width=0.48\textwidth]{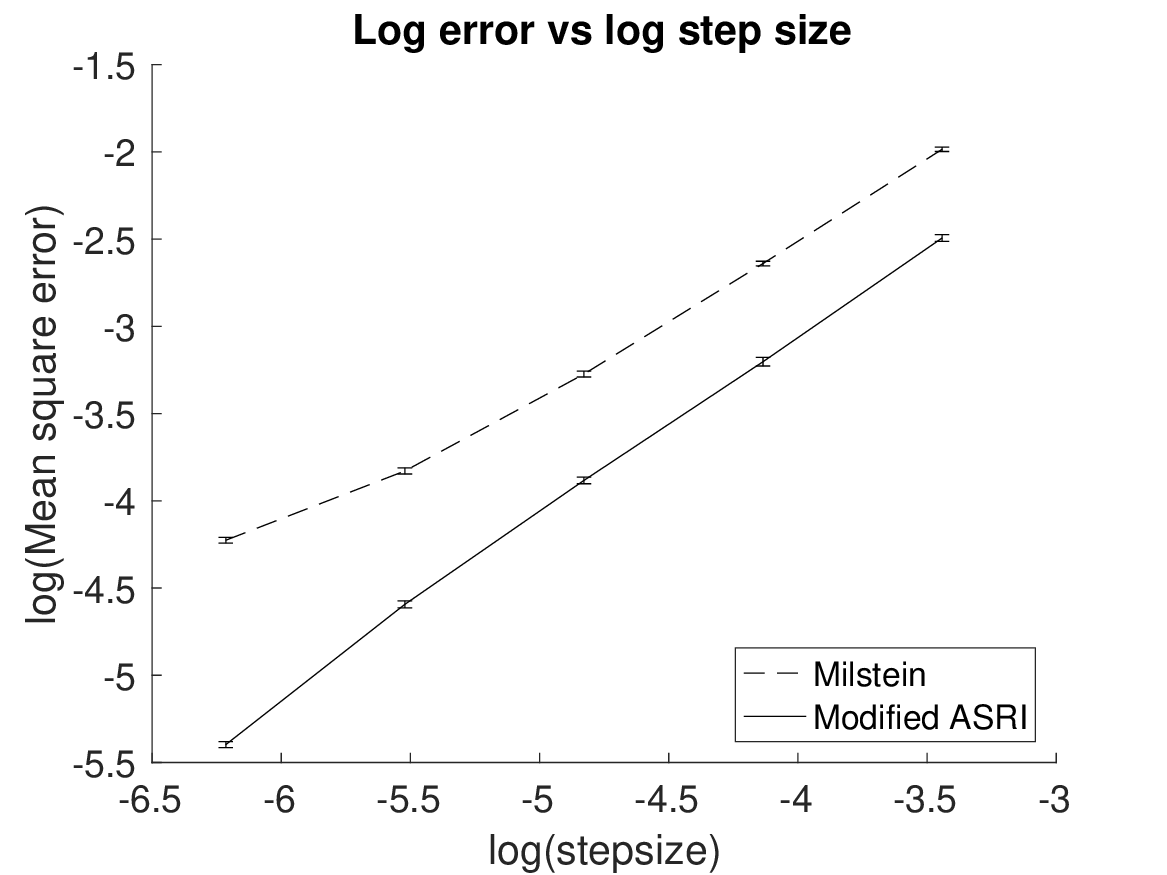}
\includegraphics[width=0.48\textwidth]{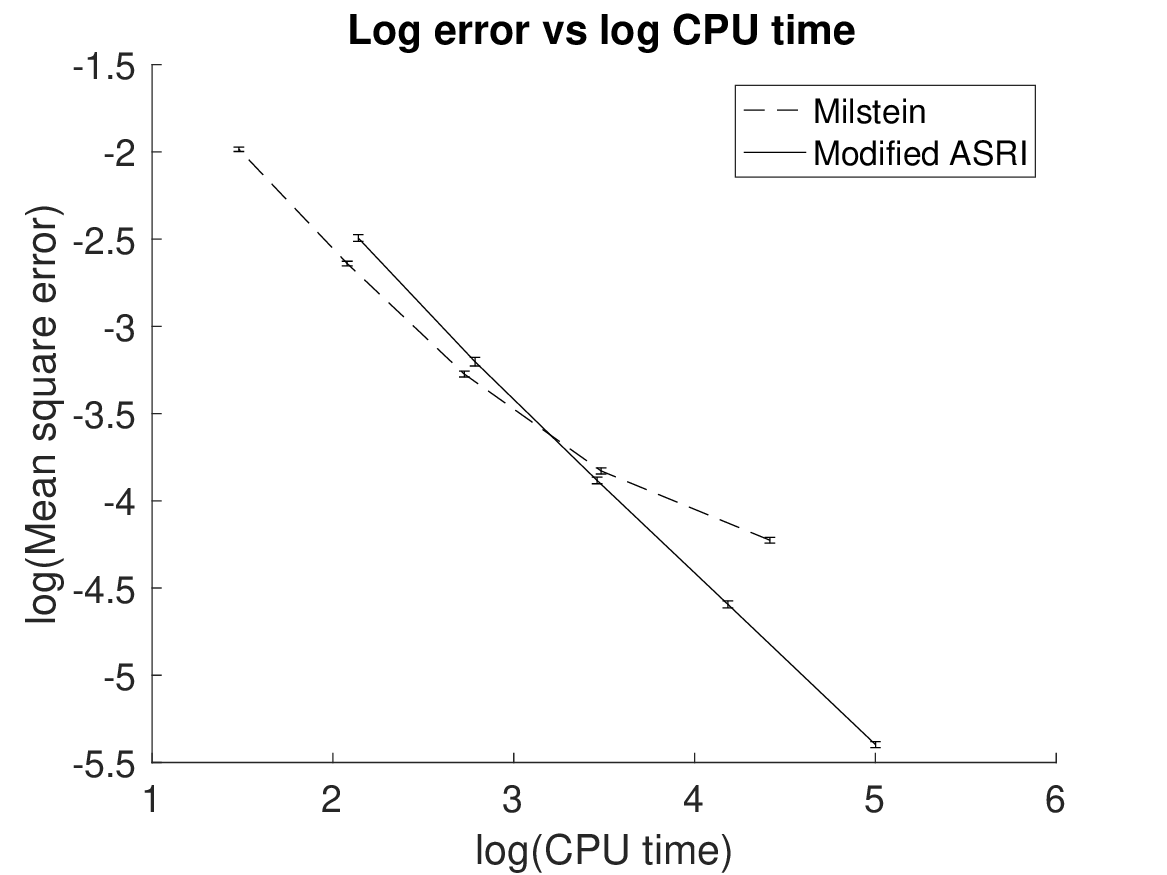}
\caption{The order 1 modified antisymmetric sign reverse integrator scheme is more accurate 
than the Milstein scheme for any step size, as shown in the left panel.
Further it is more efficient than the Milstein scheme for sufficiently small error tolerances, 
as can be seen in the right panel.}
\label{fig:main}
\end{figure}
We present some example antisymmetric sign reverse integrators (see Lemma~\ref{direct rep}) as well
as some numerical experiments which demonstrate the results proved above. 
\begin{example}
Consider the order one antisymmetric sign reverse approximation.
Since $\mathrm{coshlog}^\star(\id)=\nu+\frac12\sJ^{\star2}-\frac12\sJ^{\star3}+\cdots$
and $(S-\hat S)(a_1a_2a_3)=\frac12\bigl([a_3a_2]a_1+a_3[a_2a_1]+[a_3a_2a_1]\bigr)$
the direct antisymmetric sign reverse approximation has the form
\begin{equation*}
\sum_{|w|\leq 2} I_w \tilde{V}_w
+\!\!\!\sum_{w=a_1a_2a_3}\!\!\tfrac12\bigl(I_{a_1}I_{a_2a_3}+I_{a_1a_2}I_{a_3}-I_{a_1}I_{a_2}I_{a_3}
+I_{[a_3a_2]a_1}+I_{a_3[a_2a_1]}+I_{[a_3a_2a_1]}\bigr)\tilde{V}_w.
\end{equation*}
The terms in the second sum with $a1=a2=a3$ are zero and need not be evaluated. 
The modified antisymmetric sign reverse integrator includes the following additional terms, 
\begin{equation*}
\sum_{w=aaa} \!\frac{1}{3} \bigl(I_{aa}I_a - I_{[a,a]}I_a + I_{[a,a,a]}\bigr) \tilde{V}_w.
\end{equation*}
\end{example}

\begin{example}
We now examine the case of order one half integrators.
Here $\mathrm{coshlog}^\star(\id)=\nu+\frac12\sJ^{\star2}$ 
and $(S-\hat S)(a_1a_2) = -\frac{1}{2}[a_2a_1]$, hence the approximation takes the form
\begin{equation*}
\sum_{|w|\leq 1} I_w \tilde{V}_w 
+\!\sum_{w=a_1a_2} \!\tfrac12\bigl(I_{a_1}I_{a_2} + I_{a_2}I_{a_1} - I_{[a_2 a_1]}\bigr)\tilde{V}_w
\end{equation*}
\end{example}

Consider the equation
$\mathrm{d}y_t=A_0y_t\mathrm{d}t+A_1y_t\mathrm{d}W^1_t+V_2(y_t)\mathrm{d}W^2_t
+ A_3 y_t\mathrm{d}\tilde{N}_t$, 
where $W^1_t,W^2_t$ are Wiener processes and $\tilde{N}_t$ 
is a standard Poisson process with intensity $\lambda$. 
Here $V_2$ is the nonlinear vector field
$V_2\big(x_1,x_2,x_3,x_4\big) = \big(\sin(x_1),\cos(x_2),x_4,-\sin(x_3)\big)^T$,
whilst the constant coefficient linear vector fields are defined by the 
$\R^{4\times4}$-valued matrices $A_0$, $A_1$ and $A_2$. Their explicit form is
given in the electronic supplementary material. 
We compare the global mean square error $E(\sup_{0\leq t\leq T} |y_t - \hat{y}_t|^2)^{1/2}$, 
estimated by sampling 1000 paths, for two approximations of $\hat{y}_t$: 
the Milstein scheme and the order 1 modified antisymmetric sign reverse integrator. 
The intensity of $\tilde{N}_t$ was taken to be $\lambda=50$ and 
the initial condition was $y_0=(1,0.8,0.6,0.4)^T$. 
The code employed is available in the electronic supplementary material 
together with similar plots for the drift-diffusion case, 
linear vector fields and so forth. 
We observe in Figure~\ref{fig:main} (left panel) that the 
antisymmetric sign reverse integrator is more accurate than the Milstein scheme 
for any given step size, in accordance with the theory. 
The situation is more nuanced when the mean square error is plotted against 
the computational time as in Figure~\ref{fig:main} (right panel). 
Generically the Milstein scheme outperforms the modified 
antisymmetric sign reverse integrator for large step sizes, 
when the computational effort of the schemes is dominated by 
the evaluation of the vector fields. For smaller stepsizes, 
the computational cost of simulating iterated integrals dominates, 
upon which the modified antisymmetric sign reverse integrator outperforms the Milstein scheme.

\section*{Acknowledgement}
C.C. would like to thank Michael Tretyakov and Seva Shneer for useful comments.
We are very grateful to all the referees whose comments and suggestions significantly
helped to improve the presentation of the manuscript; in particular 
for pointing us towards reference~\cite{NewRad} on the quasi-shuffle product.

\appendix

\section{Electronic supplementary material}
We present additional details concerning the results of our manuscript. 
In Section (1) we provide an introduction to the role of quasi-shuffle algebras
and quasi-shuffle convolution algebras in the development and analysis of
strong integrators for stochastic differential equations. 
In Section (2), we relate the stochastic Taylor expansion given to that derived in 
Platen \& Bruti-Liberati, showing their equivalence. In Section (3), 
we show that the separated Taylor expansion proposed in the text 
takes a particularly simple form in the case of linear, constant coefficient equations. 
Finally, in Section (4) we present in greater detail the results of our numerical experiments, 
including more plots covering more cases, and with more comments concerning methodology. 
The MATLAB code is available as a separate file.

\subsection{SDEs and quasi-shuffle algebras}\label{sec:QSintro}
Our goal in this section is to provide an introduction to the role
of quasi-shuffle algebras as well as quasi-shuffle convolution algebras
in the development and analysis of strong integrators for stochastic
differential equations. We will assume some basic knowledge of 
stochastic differential equations and their strong/pathwise 
solution. We consider It\^o stochastic differential systems
driven by scalar stochastic processes and governed by 
vector fields of the form:
\begin{equation*}
Y_t=Y_0+\sum_{i=1}^d\int_0^t V_i(Y_\tau)\,\rd X_\tau^i,
\end{equation*}
for time interval $t\in[0,T]$ for some $T>0$. The stochastic
solution process $Y$ is assumed to be $\R^N$-valued 
for some $N\in\mathbb N$. We assume the initial data $Y_0\in\R^N$ 
is a given deterministic vector. The functions $V_i$, $i=1,\ldots,d$,
are assumed to be smooth governing vector fields and in general non-commuting;
we explain what we mean by this further below. The $X^i$, $i=1,\ldots,d$,
are scalar driving stochastic processes; their precise
characterization will be discussed presently. 
Our goals in this section are as follows, to:
\begin{enumerate}
\item Illustrate the development of the stochastic Taylor series
expansion for the solution $Y_t$ to the It\^o stochastic differential system
above. The stochastic Taylor series expansion is the basis of the 
construction of many classes of strong numerical approximation schemes;
\item Demonstrate why a precise understanding of the relationship
between the repeated integrals of the driving stochastic processes $X^i$
is important, first, in the efficient implementation of strong numerical approximation
schemes, and second, in the design of classes of numerical schemes such as those based
on the exponential Lie series. A precise understanding
of the algebraic structure generated by the product of such repeated
integrals is crucial to both of these components in the theory and implementation
of strong numerical approximation schemes; and
\item Show how to abstract the relationship between the repeated integrals,
first to the quasi-shuffle algebra, and then second, to one further level of
abstraction, the quasi-shuffle convolution algebra. We demonstrate
how this abstraction and the concepts and structures provided therein,
can be used to inform the construction of new numerical approximation schemes.    
\end{enumerate}
As we intend this to be a self-contained introduction there 
will inevitably be some repetition of the material in the 
main manuscript. We have endeavoured to achieve the three goals 
above with minimal repetition while at the same time trying
to give a broader perspective.
Our setting is a complete, filtered probability space 
$\bigl(\Omega,\mathcal F,(\mathcal F_t)_{t\geqslant0},P\bigr)$
assumed to satisfy the usual hypothesis; see Protter~\cite[p.~3]{Pro}. 
We assume the solution process $Y\in\R^N$ exists on some finite time interval. 

Our first goal in this section is to develop the stochastic Taylor series
expansion for the solution $Y_t$, without loss of generality, about $t=0$.
We illustrate how this is achieved in the context where the driving
processes $X^i$, $i=1,\ldots,d$, are independent Wiener processes.
As such the quadratic covariations $[X^i,X^j]=0$ for all $i\neq j$.
The quadratic covariation of $X^i$ with itself $[X^i,X^i]$ is known 
as its quadratic variation.
\begin{remark}
In fact our development of the stochastic Taylor series expansion
here is sufficiently general to allow for the $X^i$, $i=1,\ldots,d$,
to be continuous semimartingales. These are a generalization of 
Wiener processes. The L\'evy processes we consider in the main text,
which include jumps, are a different generalization. 
For a collection of Wiener processes all the quadratic variations
$[X^i,X^i]_t=t$ for all $i=1,\ldots,d$. However, as we see
below, our derivation of the stochastic Taylor expansion allows
for more general quadratic variation and indeed is consistent
with the derivation of the stochastic Taylor expansion for the 
case when the $X^i$, $i=1,\ldots,d$, are continuous semimartingales;
see Ebrahimi--Fard, Malham, Patras and Wiese~\cite{EfMPW2} for more 
details.
\end{remark}
The key tool for developing the stochastic Taylor series expansion for
$Y_t$ is It\^o's Lemma (chain rule). For any smooth function $f\colon\R^N\to\R^N$,
this states that if $Y_t$ satisfies the stochastic differential equation 
above, then $f(Y_t)$ satisfies (see for example Protter~\cite{Pro})
\begin{equation*}
f(Y_t)=f(Y_0)+\sum_{i=1}^d\int_0^t\bigl(V_i\cdot\pa\bigr)f(Y_\tau)\,\rd X_\tau^i
+\tfrac12\sum_{i=1}^d\int_0^t
\bigl(V_i\otimes V_i\colon\pa^2\bigr)f(Y_\tau)\,\rd[X^i,X^i]_\tau.
\end{equation*}
Let us explain carefully the notation we have used here. For any $Y\in\R^N$
we have 
\begin{equation*}
\bigl(V_i\cdot\pa\bigr)f(Y)=\sum_{j=1}^NV_i^j(Y)\pa_{Y_j}f(Y).
\end{equation*}
In other words when we consider the evolution of any function
of the solution, here $f(Y_t)$, then the corresponding first order 
terms involve the first order partial differential operators $V_i\cdot\pa$
acting on $f(Y_t)$. The noncommutativity of the vector fields above
refers to the fact that in general we assume that any two
first order partial differential operators $V_i\cdot\pa$ and $V_j\cdot\pa$
with $i\neq j$ do not commute. Hereafter we take the perspective that instead
of the vector fields $V_i(Y_t)$ being functions of $Y_t$ as they
are in the original stochastic differential equation, they are now first
order partial differential operators $V_i\cdot\pa$ acting on $f(Y_t)$.
Naturally when $f=\id$ so that $f(Y_t)=Y_t$ then 
$\bigl(V_i\cdot\pa\bigr)f(Y)=\bigl(V_i\cdot\pa\bigr)Y=V(Y)$, and the
evolution equation for $f(Y_t)$ reverts to the stochastic differential
equation for $Y_t$. The It\^o chain rule also includes second order terms as shown, 
where we have used the notation
\begin{equation*}
\bigl(V_i\otimes V_i\colon\pa^2\bigr)f(Y)
\coloneqq\sum_{j,k=1}^NV_i^j(Y)V_i^k(Y)\pa_{Y_j}\pa_{Y_k}f(Y).
\end{equation*}
The terms $[X^i,X^i]$ represent the quadratic variation of the $X^i$ for $i=1,\ldots,d$.
We now utilize the following succinct notation. We set 
\begin{equation*}
D_i\coloneqq V_i\cdot\pa,\qquad 
D_{[i,i]}\coloneqq\tfrac12 V_{i}\otimes V_{i}\colon\pa^2
\qquad\text{and}\qquad
X^{[i,i]}\coloneqq[X^{i},X^{i}].
\end{equation*}
Then the It\^o chain rule takes the form 
\begin{equation*}
f(Y_t)=f(Y_0)+\sum_{a\in\Ab}\int_0^t D_af(Y_\tau)\,\rd X_\tau^a,
\end{equation*}
where $\Ab$ denotes the alphabet of letters $\{1,\ldots,d,[1,1],\ldots,[d,d]\}$.
We use the terminology `alphabet' and `letters' as opposed to `indices'
with an eye on the algebraic structures ahead. This integral equation 
applies for any smooth function $f\colon\R^N\to\R^N$.
For example we could take $f$ to be $D_af$, and the 
relationship just above holds for the function $D_af\colon\R^N\to\R^N$.
In other words we have 
\begin{equation*}
D_af(Y_t)=D_af(Y_0)+\sum_{b\in\Ab}\int_0^t D_bD_af(Y_{\tau_2})\,\rd X_{\tau_2}^b.
\end{equation*}
Substituting this for the integrand on the right-hand side in the
equation for $f(Y_t)$, we obtain
\begin{align*}
f(Y_t)
=&\;f(Y_0)+\sum_{a\in\Ab}\int_0^t \biggl(D_{a}f(Y_0)
+\sum_{b\in\Ab}\int_0^{\tau} D_bD_af(Y_{\tau_2})\,\rd X_{\tau_2}^b
\biggr)\,\rd X_\tau^a\\
=&\;f(Y_0)+\sum_{a\in\Ab}\int_0^t \,\rd X_{\tau_1}^a\,D_{a}f(Y_0)
+\sum_{a,b\in\Ab}\int_0^t\int_0^{\tau_1}D_bD_af(Y_{\tau_2})\,\rd X_{\tau_2}^b\rd X_{\tau_1}^a.
\end{align*}
Now we replace $f$ in the equation above by $D_bD_af$ so that we have 
\begin{equation*}
D_bD_af(Y_t)=D_bD_af(Y_0)+\sum_{c\in\Ab}\int_0^t D_cD_bD_af(Y_{\tau_3})\,\rd X_{\tau_3}^c.
\end{equation*}
Substituting this for the integrand in the double integral term on
the first iteration above we find 
\begin{align*}
f(Y_t)=&\;f(Y_0)+\sum_{a\in\Ab}\int_0^t \,\rd X_{\tau_1}^a\,D_{a}f(Y_0)
+\sum_{a,b\in\Ab}\int_0^t\int_0^{\tau_1}\,\rd X_{\tau_2}^b\rd X_{\tau_1}^a\,D_bD_af(Y_0)\\
&\;+\sum_{a,b,c\in\Ab}\int_0^t\int_0^{\tau_1}\int_0^{\tau_2}
D_cD_bD_af(Y_{\tau_3})\,\rd X_{\tau_3}^c\rd X_{\tau_2}^b\rd X_{\tau_1}^a.
\end{align*}
It is now clear that we can repeat this procedure ad infinitum and the precise
form all the subsequent terms in this series expansion will take. 
Further let us introduce an even more succinct notation.  
Consider a word $w=a_1a_2\cdots a_n$ constructed from letters $a_i$, 
$i=1,\ldots,n$, from the alphabet $\Ab$. Then let us use 
$D_w\coloneqq D_{a_1}\cdots D_{a_n}$ to denote the successive 
compositon of the partial differential operators shown.
Further let us denote 
\begin{equation*}
I_w\coloneqq\int_{0\leqslant\tau_{1}\leqslant\cdots\leqslant\tau_{n}\leqslant t}
\,\rd X^{a_1}_{\tau_1}\cdots\,\rd X^{a_n}_{\tau_n}.
\end{equation*}
The iteration procedure above thus produces the solution expansion
\begin{equation*}
f(Y_t)=\sum_{w}I_wD_wf(Y_0).
\end{equation*}
Here the sum is over all words/multi-indices $w$ that can be 
constructed from the alphabet $\Ab$. This is the stochastic 
Taylor expansion for $f(Y_t)$. The stochastic Taylor expansion
for $Y_t$ itself can be recovered by setting $f=\id$. 
The stochastic Taylor expansion is the starting point 
for strong stochastic differential numerical approximation
schemes of higher order beyond the Euler--Maruyama approximation.
In the deterministic setting, and in the stochastic
setting, a very useful concept in the construction of numerical
approximation schemes is the notion of the flowmap.
\begin{definition}[Flowmap]
For any smooth function $f\colon\R^N\to\R^N$, we define the flowmap
$\varphi_t$ associated with the stochastic differential equation above
as the map prescibing the transport of the initial data $f(Y_0)$ 
to the solution $f(Y_t)$ at time $t\geqslant0$, in other words
$\varphi_t\colon f(Y_0)\mapsto f(Y_t)$. 
\end{definition}
We note the following. Naturally for the case $f=\id$ we have 
$Y_t=\varphi_t(Y_0)$. By substituting this into the definition 
of the flowmap we deduce $\varphi_t(f)=f(\varphi_t)$.
Further, from the stochastic Taylor expansion we already have an 
explicit representation for the flowmap, namely
\begin{equation*}
\varphi_t=\sum_{w}I_wD_w.
\end{equation*}
\begin{remark}[Separated stochastic Taylor expansion]
The stochastic Taylor expansion for $f(Y_t)$ above, as well as for
the flowmap $\varphi_t$, is separated. It is a sum over terms which
are the real product of the time-dependent stochastic repeated integrals 
$I_w$ and the time-independent deriative terms $D_wf(Y_0)$. This separated
form arose naturally and directly in our derivation of the 
stochastic Taylor expansion above which includes the case when
the driving processes $X^i$, $i=1,\ldots,d$, are independent
Wiener processes, as well as more generally, continuous
semimartingales. In the main text the driving stochastic processes
are independent L\'evy processes. 
The derivation of the stochastic Taylor expansion in this case
is more convoluted due to the jumps of a L\'evy process
and the analogous derivation to that above does
not produce a separated stochastic Taylor expansion directly. 
However, as shown in the main text, further Taylor expansion
of some of the governing vector fields eventually results in
a separated stochastic Taylor expansion analogous to that above.
\end{remark}

Our second goal in this section is to elucidate the relationship
between the repeated integrals $I_w$ in the stochastic Taylor expansion.
Here again for the moment, we assume that the driving stochastic
processes $X^i$, $i=1,\ldots,d$, are Wiener processes. However,
in fact, our discussion will be sufficiently general to apply to
the case when the $X^i$, $i=1,\ldots,d$, are continuous semimartingales.
We provide two demonstrations of why a precise understanding of 
these relationships is crucial to the efficient strong numerical
simulation of stochastic differential equations.
Such strong numerical approximations are constructed via finite
truncations of the stochastic Taylor expansion for $Y_t$,
or equivalently, the flowmap which is applied to $Y_0$. 
Assuming that computing the terms $D_w(Y_0)$, which involves
computing derivatives of the functions $V_i$ from the original
stochastic differential equation, is relatively straightforward, 
then the main task is to simulate the repeated integrals $I_w$. 
This is where most of the computational burden in the strong simulation 
of stochastic differential equations lies and any possible efficiencies 
should be utilized. With this in mind we notice that not all 
repeated integrals of a given order in the expansion are independent. 
For example integration parts, i.e.\/ the product rule, reveals that
\begin{equation*}
I_iI_j=I_{ij}+I_{ji}+\delta_{ij}I_{[i,j]},
\end{equation*}
for any letters $i,j\in\Ab$ and where $\delta_{ij}$ is the 
Kronecker delta function. Thus for instance if $i=j$ then
we do not need to simulate $I_{ii}$ as we can construct them from
$I_i\equiv X^i$ and $I_{[i,i]}\equiv X^{[i,i]}$. Or for example 
if $i\neq j$ and we have simulated $I_{ij}$, then we can construct 
$I_{ji}$ from $I_{ij}$ and $X^i$ and $X^j$. We start to see how
a clear understanding of the algebraic structure that underpins 
the relationships between the repeated integrals $I_w$ has 
practical impact on the numerical simualation of stochastic
differential equations. This is our first example 
demonstrative evidence. Now for our second. Other classes of
numerical methods, for example those which utilize the exponential Lie series,
can be constructed by considering functions of the flowmap.
For a given function $F\colon\mathrm{Diff}(\R^N)\to\mathrm{Diff}(\R^N)$, 
such a strong numerical simulation method would be constructed from
$\varphi_t$ as follows:
\begin{enumerate}
\item Construct a new series $\psi_t=F(\varphi_t)$; 
\item Truncate this series to produce the finite expansion $\hat\psi_t$;
\item Reconstruct an approximate flowmap as $\hat\varphi_t\coloneqq F^{-1}(\hat\psi_t)$; and
\item Use $\hat\varphi_t$ as basis of a strong numerical approximation scheme.
\end{enumerate}
If $F=\id$, the identity map, this approach corresponds to 
implementing a truncated stochastic Taylor expansion as a numerical approximation
scheme. The archetypical numerical scheme constructed using
the procedure just outlined is the case when $F=\log$. 
The series expansion $\log\varphi_t$ can be shown to be a Lie
series for quite general scenarios, including the continuous semimartingale
context, see Ebrahimi--Fard \textit{et al.\/} \cite{EfMPW2}. 
As such it is known as the exponential Lie series. 
We call the corresponding numerical methods based on this approach 
Castell--Gaines methods after their implementation in
Castell \& Gaines~\cite{CasGai96}. To construct these methods 
we actually need an explicit representation for $\log\varphi_t$. This
can be achieved as follows. We in fact compute $F(\varphi_t)$ for 
any function $F$ with the power series representation
\begin{equation*}
F(\phi)=\sum_{k\geq0}c_k\phi^k,
\end{equation*}
with coefficients $c_k\in\R$ for $k\in\mathbb N\cup\{0\}$.
By direct computation we see that
\begin{align*}
F(\varphi_t)&=\sum_{k\geq0}c_k\Biggl(\sum_{w}I_wD_w\Biggr)^k\\
&=\sum_{k\geq0}c_k\sum_{u_1,\ldots,u_k}
(I_{u_1}I_{u_2}\cdots I_{u_k})(D_{u_1}D_{u_2}\cdots D_{u_k})\\
&=\sum_{w}\Biggl(\sum_{k=1}^{|w|}c_k\sum_{u_1u_2\cdots u_k=w}I_{u_1}I_{u_2}\cdots I_{u_k}\Biggr)D_w.
\end{align*}
In this calculation the: summations $\sum_{w}$ are over all possible words $w$
that can be constructed from the alphabet $\Ab$; summation $\sum_{u_1,\ldots,u_k}$ is over all
possible words $u_1$, \ldots, $u_k$ that can be constructed from the alphabet $\Ab$;
and summation $\sum_{u_1u_2\cdots u_k=w}$ is the sum over all possible collections of $k$ words
$u_1$, \ldots, $u_k$ that can be concatenated together to make the given word $w$---all 
the words $u_1$, \ldots, $u_k$ and $w$ are again constructed from the alphabet $\Ab$.
Finally $|w|$ denotes the length of the word $w$, i.e.\/ the sum of all
the individual letters from the alphabet $\Ab=\{1,\ldots,d,[1,1],\ldots,[d,d]\}$ 
used in the word. We observe that 
an integral part of the construction above are products of repeated integrals
of the form $I_{u_1}I_{u_2}\cdots I_{u_k}$. This is our second example 
demonstrative evidence. Looking ahead to our third goal which we address presently,
we can already see the need to consider combinatorial calculations, for example
we need to consider all possible ways of splitting the word $w$ into $k$ words which
concatenate together to create $w$. Further we need to consider the algebra of 
repeated integrals, i.e.\/ we need to be able to consider linear combinations of
real products of repeated integrals $I_w$. Before we move onto these structures,
let us round off our second goal in this section with a concrete demonstration
of how, for example, the Castell--Gaines numerical method can be implemented in 
practice. For simplicity assume there are only two driving processes so $d=2$
and they are both Wiener processes so $X^1=W^1$ and $X^2=W^2$. If
we truncate the series $\psi_t=\log\varphi_t$, then across the computation interval
$[t_m,t_{m+1}]$ we have
\begin{equation*}
\hat\psi_{t_m,t_{m+1}}=\hat I_1(t_m)V_1\!\cdot\!\pa+\hat I_2(t_m)V_2\!\cdot\!\pa\\
+\tfrac12\bigl(\hat I_{12}(t_m)-\hat I_{21}(t_m)\bigr)
\bigl((V_1\!\cdot\!\pa)(V_2\!\cdot\!\pa)-(V_2\!\cdot\!\pa)(V_1\!\cdot\!\pa)\bigr)
\end{equation*}
and then computing the inverse $F^{-1}$ of this and applying it
to the data $Y_{t_m}$, we have
\begin{equation*}
Y_{t_{m+1}}\approx\exp\bigl(\hat\psi_{t_m,t_{m+1}}\bigr)(Y_{t_m}).
\end{equation*}
The truncated series $\psi_t=\log\varphi_t$ in this case is a Lie series
and the terms associated with the words $w=12$ and $w=21$ can be expressed in
the form shown involving the Lie bracket of the two vector fields. The hats
on the increments $\hat I_1(t_m)=\Delta W^1(t_m)$ and $\hat I_2(t_m)=\Delta W^2(t_m)$
and repeated integrals $\hat I_{12}(t_m)$ and $\hat I_{21}(t_m)$ indicate
realizations of these random variables---or suitable approximations thereof.
The approximate flowmap is thus 
$\hat\varphi_{t_m,t_{m+1}}=\exp\bigl(\hat\psi_{t_m,t_{m+1}}\bigr)$ and is
the basis for a strong numerical approximation scheme as outlined in (iv) above.
Finally we compute $Y_{t_{m+1}}$ in practice by using a suitably high order
ordinary differential numerical method to integrate
$u'(\tau)=\hat\psi_{t_m,t_{m+1}}(u(\tau))$,
across $\tau\in[0,1]$ with $u(0)=Y_{t_m}$, generating $u(1)\approx Y_{t_{m+1}}$.

Our third and final goal in this section is to concretely establish 
the connection between some specific Hopf combinatorial algebras,
namely those involving the quasi-shuffle and concatenation products,
and the analysis of functions of the stochastic Taylor expansion 
for the flowmap associated with a given stochastic differential 
equation. In fact we take this one abstraction step further and
establish the connection between a quasi-shuffle convolution
algebra and the analysis of the flowmap. At this stage we want
to include the possibility that the driving stochastic processes 
are independent L\'evy processes as outlined in the main text. 
We also want to assume a separated stochastic Taylor expansion 
for the flowmap as our starting point and so a few remarks 
are required to bridge the gap between our presentation hitherto 
and the full L\'evy process case. To construct the 
separated stochastic Taylor expansion for the flowmap when the
$X^i$, $i=1,\ldots,d$, are continuous semimartingales, we 
iteratively applied the It\^o chain rule which involved
the driving processes and the full set of quadratic variations
$[X^i,X^i]$, $i=1,\ldots,d$. Once we start computing the  
product of repeated integrals further quadratic variations or
power brackets such as $[X^i,[X^i,X^i]]$ will be generated.
For the case when the driving stochastic processes 
$X^i$, $i=1,\ldots,d$, are L\'evy processes, then already
at the stage of deriving a separated form for the 
stochastic Taylor expansion for the flowmap, higher nested
power brackets are generated. They are 
also generated once we start considering products of repeated integrals.
Importantly, the covariation bracket is both commutative and 
associative so any nested bracket is invariant to the order of 
the processes and bracketing therein; see 
Curry, Ebrahimi--Fard, Malham and Wiese~\cite{CEfMW} for more details.
As shown in the main text, provided we extend our alphabet
to account for these higher nested power brackets, then by 
further Taylor expansions for the vector fields involving
the jump processes, we can derive a separated 
stochastic Taylor expansion for the flowmap in the 
L\'evy process case. We thus take as our starting point 
a separated stochastic Taylor expansion for the flowmap
as indicated above, albeit with an extended alphabet as we
have outlined.

The basis for our subsequent development hereafter is 
the product rule for any two repeated integrals and
our computation for $F(\varphi_t)$ just above. We assume
without loss of generality that $X^i_0=0$ for all $i=1,\ldots,d$.
From Protter~\cite[p.~58]{Pro} and 
Curry \textit{et al.}\/ \cite[Section~4]{CEfMW}
for words $u,v$ and letters $a,b$ from the extended 
alphabet $\Ab$ we have the following product rule
\begin{equation*}
I_{ua}(t)I_{vb}(t)=\int_0^t\!I_u(\tau_{\scriptscriptstyle{-}})
I_{vb}(\tau_{\scriptscriptstyle{-}})\,\rd I_a(\tau)
+\int_0^t\!I_{ua}(\tau_{\scriptscriptstyle{-}})I_{v}(\tau_{\scriptscriptstyle{-}})\,\rd I_b(\tau)
+\int_0^t\!I_u(\tau_{\scriptscriptstyle{-}})I_{v}(\tau_{\scriptscriptstyle{-}})\,\rd [I_a,I_b](\tau).
\end{equation*}
Let us denote by $\Ab^*$ the free monoid over $\Ab$. This is the set
of all words $w=a_1a_2\cdots a_n$ that can be constructed from the 
letters $a_i\in\Ab$. We denote the empty word by $\mathbbold{1}\in\Ab$.
Let $\R\Ab$ denote the $\R$-linear span of $\Ab$,
and let $\R\langle\Ab\rangle$ denote the vector space of polynomials
in the non-commuting variables in $\Ab^*$. We assume that 
$[\,\cdot\,,\,\cdot\,]\colon\mathbb{RA}\otimes\mathbb{RA}\to\mathbb{RA}$ 
is a commutative, associative product on $\mathbb{RA}$. 
The quasi-shuffle product 
on $\mathbb{R}\langle \mathbb{A}\rangle$, which is commutative, 
is generated inductively as follows:
if $\mathbbold{1}$ is the empty word then $u*\mathbbold{1}=\mathbbold{1}*u=u$ and
\begin{equation*}
ua*vb = (u*vb)a + (ua*v)b + (u*v)[a,b],
\end{equation*}
for all words $u,v\in\mathbb{A}^*$ and letters $a,b\in\mathbb{A}$. 
Endowed with the product `$*$' just defined, 
$\mathbb{R}\langle \mathbb{A}\rangle$ is a commutative and associative
algebra known as the quasi-shuffle algebra, which we denote by 
$\R\langle\Ab\rangle_*$. The connections between $[\,\cdot\,,\,\cdot\,]$
and the quadratic covariation, and then between quasi-shuffle product
of the words $ua$ and $vb$ and the real product of the repeated integrals
$I_{ua}$ and $I_{vb}$ are immediate. Indeed as outlined in the main
text, we can define a word-to-integral map $\mu\colon w\mapsto I_w$ which,
extended linearly to $\R\langle\Ab\rangle_*$, is an algebra isomorphism. 
Here we use the convention that repeated integrals indexed by polynomials 
are defined by linearity, i.e.\/ 
$I_{k_u u + k_v v} = k_u I_{u} + k_v I_{v}$,
for any constants $k_u,k_v\in\mathbb{R}$ and words $u,v\in\mathbb{A}^*$.
Thus we have established that the algebra of repeated integrals
and the quasi-shuffle algebra have precisely the same structure.
Thus for example, any linear combination of products of repeated 
integrals has an exact corresponding representation (under $\mu^{-1}$) 
and evaluation (under $\mu$) in the quasi-shuffle algebra.

There is another natural algebra implicit in our 
stochastic Taylor expansion representation for the flowmap, 
which we assume has a separated form so that
\begin{equation*}
\varphi_t=\sum_{w\in\Ab^*}I_wD_w.
\end{equation*}
The other natural algebra is the algebra associated with the composition
of the partial differential operators $D_w$. Recall that for any word 
$w=a_1a_2\cdots a_n\in\Ab^*$, the partial differential operator $D_w$ 
denotes $D_{a_1}D_{a_2}\cdots D_{a_n}$,
where the partial differential operators are compositionally evaluated
right to left on any suitable target functions. We extend this ``product'' 
convention linearly to any linear combinations of such expressions.
At the abstract level on the vector space $\R\langle\Ab\rangle$, instead
of the quasi-shuffle product, we can define another product, namely 
the concatenation product. 
The concatenation product of two words $u,v\in\Ab^*$ results in their
concatenation $uv\in\Ab^*$ which is extended linearly to $\R\langle\Ab\rangle$.
Further, we can define a word-to-operator map 
$\kappa\colon w\mapsto D_w$ which is an algebra homomorphism. 
The domain of this homomorphism is $\R\langle\Ab\rangle$ equipped 
with the concatenation product, which we denote simply by 
$\R\langle\Ab\rangle$ hereafter. We thus have two combinatorial
algebras, the quasi-shuffle algebra $\R\langle\Ab\rangle_*$ and
the concatenation algebra $\R\langle\Ab\rangle$. We can consider 
their completed tensor product 
$\R\langle\Ab\rangle_*\overline{\otimes}\,\R\langle\Ab\rangle$.
Further we see that the flowmap is the image under $\mu\otimes\kappa$
of the element in $\R\langle\Ab\rangle_*\overline{\otimes}\,\R\langle\Ab\rangle$
given by
\begin{equation*}
\sum_{w\in\Ab^*}w\otimes w.
\end{equation*}
Note the product of any two elements $u\otimes u^\prime$
and $v\otimes v^\prime$ in the tensor algebra 
$\R\langle\Ab\rangle_*\overline{\otimes}\,\R\langle\Ab\rangle$
is given by 
$(u\otimes u^\prime)(v\otimes v^\prime)=u*u^\prime\otimes vv^\prime$; 
see for example Reutenauer~\cite{Reu}.

We now turn our attention to the computation, evaluation and  
representation of functions $F(\varphi_t)$ of the flowmap $\varphi_t$.
Assuming that $F$ has a power series representation as indicated 
above, recall our previous result from computing $F(\varphi_t)$, namely:
\begin{equation*}
F(\varphi_t)
=\sum_{w\in\Ab^*}\Biggl(\sum_{k=1}^{|w|}c_k\sum_{u_1u_2\cdots u_k=w}
I_{u_1}I_{u_2}\cdots I_{u_k}\Biggr)D_w.
\end{equation*}
We see that this is the image under $\mu\otimes\kappa$
of the element in $\R\langle\Ab\rangle_*\overline{\otimes}\,\R\langle\Ab\rangle$
given by
\begin{equation*}
F\Biggl(\sum_{w\in\Ab^*}w\otimes w\Biggr)=
\sum_{w\in\Ab^*}\Biggl(\sum_{k=1}^{|w|}c_k\sum_{u_1u_2\cdots u_k=w}u_1*u_2*\cdots *u_k\Biggr)\otimes w.
\end{equation*}
We have essentially completed our first level of abstraction, the flowmap
or any function of it can be represented in the tensor algebra 
$\R\langle\Ab\rangle_*\overline{\otimes}\,\R\langle\Ab\rangle$.
Note that when computing the function $F$ on the flowmap, we made a choice
to rearrange the sum by collecting all words $w$ on the right of the tensor 
`$\otimes$'. There is dual procedure which involves collecting all the words on 
the left instead. The interested reader should consult 
Reutenauer~\cite[Chapter~3]{Reu}, our main inspiration for the abstractions
and computations in this section, for this dual point of view.
Back to our convention above of collecting all the words on the right
and combinatorial operations thereof on the left. We see that we can write
\begin{equation*}
\sum_{w\in\Ab^*}w\otimes w=\sum_{w\in\Ab^*}\mathcal I(w)\otimes w
\qquad\text{and}\qquad
F\Biggl(\sum_{w\in\Ab^*}w\otimes w\Biggr)=
\sum_{w\in\Ab^*}\mathcal F(w)\otimes w,
\end{equation*}
where $\mathcal I$ is the identity endomorphism on $\R\langle\Ab\rangle_*$, while
also on $\R\langle\Ab\rangle_*$, $\mathcal F$ is the endomorphism 
\begin{equation*}
\mathcal F(w)=\sum_{k=1}^{|w|}c_k\sum_{u_1u_2\cdots u_k=w}u_1*u_2*\cdots *u_k.
\end{equation*}
\begin{remark}[Quasi-shuffle Hopf algebras]
We remark that: (i) Classical introductions to Hopf algebras can be found
in Radford~\cite{Ra} and Sweedler~\cite{S}; (ii) It is well-known 
in some physics communities that Hopf algebras provide efficient computational 
methods for working with groups, see for example Cartier~\cite{Car} and Manchon~\cite{Man};
and (iii) As we remark in the main manuscript, Hudson and collaborators 
studied quasi-shuffle Hopf algebras in the guise of sticky shuffle product Hopf algebras 
in the context of quantum stochastic calculus, see Hudson~\cite{Hudson_2}. 
\end{remark}

We now begin our second level of abstraction, and as such,
we have come to the point where we need to endow 
$\R\langle\Ab\rangle_*$ and $\R\langle\Ab\rangle$ with a bialgebra
structure. A bialgebra is a vector space equipped not only with a product,
but also a coproduct, with the product and coproduct obeying certain
compatability relations; see Reutenauer~\cite{Reu} for more details. 
We adhere to a minimalist briefing on this for simplicity.
Let $\la\,\cdot\,,\,\cdot\,\ra\colon\R\la\Ab\ra\otimes\R\la\Ab\ra\to\R$ 
denote the bilinear form where, for any words $u,v\in\Ab^\ast$, we set
$\langle u,v\rangle$ to be $1$ if $u=v$ and $0$ if $u\neq v$.
For this scalar product, the free monoid $\Ab^\ast$ forms
an orthonormal basis. With this in hand, 
we can further endow $\R\langle\Ab\rangle_*$ and $\R\langle\Ab\rangle$
with the following respective coproducts.
\begin{definition}[Deconcatenation and de-quasi-shuffle coproducts]
We define the deconcatenation coproduct 
$\Delta\colon\R\la\Ab\ra\to\R\la\Ab\ra\otimes\R\la\Ab\ra$
for any word $w\in\R\la\Ab\ra$ by
\begin{equation*}
\Delta(w)\coloneqq\sum_{u,v}\la uv,w\ra\,u\otimes v.
\end{equation*}
We also define the de-quasi-shuffle coproduct
$\Delta'\colon\R\la\Ab\ra\to\R\la\Ab\ra\otimes\R\la\Ab\ra$
for any word $w\in\R\la\Ab\ra$ by
\begin{equation*}
\Delta'(w)\coloneqq\sum_{u,v}\la u\,\quas\, v,w\ra\,u\otimes v.
\end{equation*}
\end{definition}
Note for example, the action of the deconcatenation coproduct on a word $w\in\Ab^*$,
is to generate a sum of all of its possible two-partitions $u\otimes v$ 
including $\mathbbold{1}\otimes w$ and $w\otimes\mathbbold{1}$.
Endowed with quasi-shuffle product and deconcatenation coproduct
$\R\langle\Ab\rangle_*$ is a bialgebra. Further, endowed with 
the concatenation product and de-quasi-shuffle coproduct
$\R\la\Ab\ra$ is also a bialgebra. An endomorphism known as
the antipode can be defined for both these bialgebras, and is 
given in Hoffman~\cite{Hof}. Equipped with antipodes, both these 
bialgebras become Hopf algebras. We can now define the convolution of any
two endomorphisms on the quasi-shuffle Hopf algebra $\R\la\Ab\ra_*$.
\begin{definition}[Convolution product]
Suppose $\mathcal G_1$ and $\mathcal G_2$ are two linear endomorphisms on the Hopf quasi-shuffle
algebra $\R\la\Ab\ra_*$. 
We define their quasi-shuffle convolution product 
$\mathcal G_1\star\mathcal G_2$ by the formula
\begin{equation*}
\mathcal G_1\star\mathcal G_2
\coloneqq *\circ(\mathcal G_1\otimes\mathcal G_2)\circ\Delta.
\end{equation*}
\end{definition}
In other words, since the deconcatenation product $\Delta$ splits any word 
$w\in\Ab^*$ in the sum of all its two-partitions $u\otimes v$, including when
$u$ or $v$ are the empty word $\mathbbold{1}$, we have
\begin{equation*}
(\mathcal G_1\star\mathcal G_2)(w)
\coloneqq \sum_{uv=w}\mathcal G_1(u)*\mathcal G_2(v).
\end{equation*}
Let us denote by $\mathrm{End}(\R\la\Ab\ra_*)$ the $\R$-module of 
linear endomorphisms of $\R\la\Ab\ra_*$.
The quasi-shuffle convolution product on $\mathrm{End}(\R\la\Ab\ra_*)$
naturally extends as follows 
\begin{equation*}
(\mathcal G_1\star\mathcal G_2\star\cdots\star\mathcal G_k)(w)
\coloneqq\sum_{u_1u_2\cdots u_k=w}\mathcal G_1(u_1)*\mathcal G_2(u_2)*\cdots *\mathcal G_k(u_k),
\end{equation*}
for any $\mathcal G_i\in\mathrm{End}(\R\la\Ab\ra_*)$, $i=1,\ldots,k$.
By convention if $k>|w|$ then we set the convolution product to zero.
Now recall our endomorphism $\mathcal F\in\mathrm{End}(\R\la\Ab\ra_*)$
generated when we computed the function $F$ of the flowmap. We can
now express $\mathcal F$ as follows,
\begin{align*}
\mathcal F(w)&=\sum_{k=1}^{|w|}c_k\sum_{u_1u_2\cdots u_k=w}u_1*u_2*\cdots *u_k\\
&=\sum_{k\geqslant 1}c_k\mathcal I^{\star k}(w).
\end{align*}
In other words we can identify the flow map with the identity endomorphism $\mathcal I$
as we have already indicated, and we can identify the function $F$ of the flowmap with
the endomorphism
\begin{equation*}
\mathcal F^\star(\mathcal I)=\sum_{k\geqslant 1}c_k\mathcal I^{\star k}.
\end{equation*}
We have now essentially completed our second level of abstraction. The idea now
is as follows. We assume a given fixed stochastic differential system, 
i.e.\/ one for which the driving stochastic processes and governing vector fields
are fixed. The extended alphabet and words are thus fixed. Then different 
numerical integration schemes based on the map-truncate-invert approach, 
are distinguished by their actions on those words represented by the corresponding
endomorphisms acting on them. On further piece in the puzzle is required,
a measure to compare the different remainders generated by different endomorhpisms
$\mathcal F^\star(\mathcal I)$. This is provided by constructing an inner
product on $\mathrm{End}(\R\la\Ab\ra_*)$ via a corresponding expectation
map which associates the correct expectation with any word $w$,
i.e.\/ repeated integral $I_w$, constructed from the extended alphabet.

\subsection{Platen and Bruti--Liberati form}\label{app:STequivform}
We show that the stochastic Taylor expansion derived in 
Platen \& Bruti-Liberati \cite{PlaBl} is an equivalent though 
different representation of the stochastic Taylor expansion
we give in Theorem 2.1 of the manuscript.
The equivalence of the expansions is seen from the identity
\begin{equation*}
\int_0^t \int_{\mathbb{R}^{\ell-d}} (\tilde{V}_{-1}\circ f)(y_{s_-},v) 
\,\overbar{Q}(\mathrm{d}v,\mathrm{d}s) 
= \sum_{i=d+1}^\ell \int_0^t \int_{\mathbb{R}} (\tilde{V}_i\circ f)(y_{s_-},v) 
\,\overbar{Q}^i(\mathrm{d}v,\mathrm{d}s).
\end{equation*} 
The expression on the left is the encoding employed by Platen and Bruti-Liberati of 
the jump terms in the stochastic differential equation while our encoding is
that on the right. The equivalence is explained as follows. On the left above
$\overbar{Q}$ is a compensated Poisson random measure on 
$\mathbb{R}^\ell \times \mathbb{R}_+$. 
The L\'{e}vy-driven equation may be written in this form, 
where $V_{-1}(x,v) = \sum_{i=1}^{\ell-d} v_i V_{i+d}(x)$ 
and $\overbar{Q}$ is the compensation of the Poisson random measure $Q$ 
defined such that 
$Q(B,(a,b]) = \#\{\Delta J^1_s\in B_1, \ldots, \Delta J^\ell_s 
\in B_\ell\, \colon\;\; s\in (a,b]\}$ 
for a Borel set $B = B_1\times\cdots\times B_\ell\subset\mathbb{R}^\ell$ 
bounded away from the origin, i.e.\/ $0\notin \bar{B}$. 
As independent L\'{e}vy processes almost surely never jump simultaneously 
(see Cont \& Tankov \cite[Theorem 5.3]{ConTan}), the measure
$Q$ is concentrated on sets of the form 
$0\times\cdots\times 0\times B_i\times 0\times\cdots\times 0$ 
with intensity measure 
$\meas(0\times\cdots\times 0\times B_i\times 0\times\cdots\times 0)\mathrm{d}t
=\meas^i(B_i)\mathrm{d}t$. 
The identity above thus follows.
The stochastic Taylor expansion given in Platen \& Bruti-Liberati \cite{PlaBl} 
is of the same form as the expansion we have given, but where the 
alphabet is instead $\{-1,0,\ldots,d\}$, 
and the operator associated to the letter $-1$ is the multi-dimensional shift 
$\tilde{V}_{-1}\colon f(y)\mapsto f\bigl(y+V_{-1}(y,v)\bigr)-f(y)$.

\subsection{Linear vector fields and linear diffeomorphisms}\label{app:STlinearform}
The separated stochastic Taylor expansion has a simple form when the governing vector
fields are linear with constant coefficients and we consider the action of the flowmap 
on homogeneous linear diffeomorphisms, say $f(y)=Fy$, where $F=[f_{ij}]$ is 
an $N\times N$ matrix. The identity map is a special case of such a linear
diffeomorphism which generates the solution $y_t$ directly.
In this case, the operators in the expansion are given by matrix multiplication. 
We write $V_i(y) = A^i y$, where $A^i=[a^i_{jk}]$ are constant $N\times N$ matrices 
and consider the action of the flowmap on linear functions.
First, for $i=1,\ldots,d$ by direct computation we find $\tilde{V}_i\circ f(y)=FA^iy$.
Moreover, for $i=d+1,\ldots,\ell$ the higher order differential operators 
$\tilde{V}_{i^{(m)}}$ with $m\geq 2$ vanish due to the linearity, and we obtain 
$(\tilde{V}_{i}\circ f)(y,v)=v(V_i\cdot\nabla)f(y)=vFA^iy$.
The above relation shows in addition that the term in 
$\tilde{V}_0\circ f$ involving an integral over the jump sizes vanishes. 
As the functions we act on are linear, the second order terms of $\tilde{V}_0$ also vanish. 
We therefore have $\tilde{V}_0\circ f(y) = f(V_0(y)) = FA^0 y$. 
The operators $\tilde{V}_i$ act by matrix multiplication. 
It follows that the operators $\tilde{V}_w$ act on linear functions 
by matrix multiplication in the reverse order,
$\tilde{V}_{a_1\ldots a_k}\circ f(y) = F A^{a_k}\cdots A^{a_1}y$.

\subsection{Numerical experiments}

Our numerical investigations principally concern the equation
$\mathrm{d}y_t=A_0y_t\mathrm{d}t+A_1y_t\mathrm{d}W^1_t+V_2(y_t)\mathrm{d}W^2_t
+ A_3 y_t\mathrm{d}\tilde{N}_t$, 
where $W^1_t,W^2_t$ are Wiener processes and $\tilde{N}_t$ is a standard 
Poisson process with intensity $\lambda$. Here $V_2$ is the nonlinear vector field
\begin{equation*}
V_2\big(x_1,x_2,x_3,x_4\big) = \big(\sin(x_1),\cos(x_2),x_4,-\sin(x_3)\big)^T,
\end{equation*}
whilst the constant coefficient linear vector fields are defined by the following matrices
\begin{equation*}
 A_0=\begin{pmatrix} 
 0.314724 & 0.132359 & 0.457507 & 0.457167 \\
 0.405792 & -0.402460 & 0.464889 & -0.014624 \\
 -0.373013 & -0.221502 & -0.342387 & 0.300280 \\
 0.413376 & 0.046882 & 0.470593 & -0.358114
 \end{pmatrix}
 \end{equation*}
 \begin{equation*}
 A_1=\begin{pmatrix} 
 -0.078239 & 0.155741 & 0.178735 & 0.155478 \\
 0.415736 & -0.464288 & 0.257740 & -0.328813 \\
 0.292207 & 0.349129 & 0.243132 & 0.206046 \\
 0.459492 & 0.433993 & -0.107773 & -0.468167
 \end{pmatrix}
\end{equation*}
\begin{equation*}
 A_3=\begin{pmatrix} 
 -0.223077 & 0.194829 & -0.061256 & -0.313127 \\
 -0.453829 & -0.182901 & -0.118442 & -0.010236 \\
 -0.402868 & 0.450222 & 0.265517 & -0.054414 \\
 0.323458 & -0.465554 & 0.295200 & 0.146313
 \end{pmatrix}.
\end{equation*}
We also considered the special cases where: (i) $V_2$ was set to zero, 
so we have only linear coefficients (`Linear jump diffusion'), and 
(ii) $\tilde{N}_t$ was set to zero, so there are only continuous driving processes 
(`Nonlinear diffusion'). We compare the global mean square error 
$E(\sup_{0\leq t\leq T} |y_t - \hat{y}_t|^2)^{1/2}$, estimated by sampling 1000 paths, 
for two approximations of $\hat{y}_t$: the Milstein scheme and the order 1 modified 
antisymmetric sign reverse integrator (mASRI). The intensity of $\tilde{N}_t$ was taken 
to be $\lambda=50$ and the initial condition was $y_0=(1,0.8,0.6,0.4)^T$.

We briefly describe the methods used to generate the iterated integrals. 
In the absence of jumps, the integrals $\int_t^{t+h} W^1_s dW^2_s$ and 
$\int_t^{t+h} W^2_s dW^1_s$ were simulated using a truncated Fourier transform, 
as detailed in Kloeden and Platen \cite{KloPla}. Where one or more jumps 
occur in a timestep, the situation changes. To simulate the integrals 
$\int_t^{t+h} W^i_s dN_s$ and $\int_t^{t+h} N_s dW^i_s$, we simulate the 
$W^i_t$ at each jump time, which allows for exact computation of the 
aforementioned integrals. The integrals $\int_t^{t+h} W^1_s dW^2_s$ and 
$\int_t^{t+h} W^2_s dW^1_s$ are not independent of the values of $W^i_t$ 
at the jump times, so the truncated Fourier method becomes impractical. 
We therefore simulate the $W^i_t$ on a finer grid, incorporating the jump times, 
and use the trapezoidal approximation of the integrals, 
see Milstein and Tretyakov \cite{MilTre}. There are two relevant parameters 
pertaining to the simulation of the iterated Wiener integrals: 
the number of terms $p$ retained in the Fourier series, and the number of 
points $M$ used in the trapezoidal approximation. To obtain comparable 
accuracy for the two methods we take $M=5(p+1)$. For integrators of order 1, 
$p$ should scale like $1/h$, where $h$ is the timestep, 
see Milstein and Tretyakov \cite{MilTre}.

As the equation has no explicit solution, we estimate the mean square error 
by comparing the sample paths simulated with those obtained by a 
numerical scheme for the same simulated driving processes and iterated integrals, 
but employing a smaller time step. In practice, we simulate only at the finest scale, 
and then extrapolate the values to the coarser scale. To obtain estimates 
of the computational time, we run and time the code to generate the 
iterated integrals at the coarser scales remembering to scale $p$ with $1/h$. 
It is important to note that the mean square estimates obtained by this method does not include 
the simulation error for the iterated integrals.
We therefore experimented with different values of $p$ on the coarsest time step
(in the figures below the values of $p$ quoted correspond to the value for $p$ used
at the most coarse scale; it is increased in proportion with $1/h$ for smaller stepsizes). 
The relative size of the simulation time and the evaluation time is key 
to the effectiveness of the (m)ASRI scheme. As the simulation time increases with $p$, 
and $p$ scales like $1/h$, it can be shown that below a critical step size, 
the simulation time dominates the evaluation time, see 
Lord, Malham and Wiese \cite{LMW}. This step size depends on the value of 
$p$ at the coarse scale, and is lower for the (m)ASRI scheme than the 
Milstein scheme, due to the increased evaluation cost of the former.

We show the results of our simulations. In accordance with the theory, 
in all cases the mASRI scheme has lower mean square error for a given 
timestep than the Milstein scheme. We then plot the mean square error 
against the CPU time taken in each case, together with the plot of 
simulation time versus evaluation time. The time steps employed are 
typically such that these values are comparable, although in practice 
smaller time steps would frequently be used. In all cases, 
the evaluation time of the mASRI scheme was greater than the simulation time, 
whilst the simulation time often overtook the evaluation time of the 
Milstein scheme as the timestep became smaller. Below its critical step size, 
the mASRI scheme will always be more efficient than the Milstein scheme. 
Generically, it tends to outperform the Milstein scheme also for step sizes 
below the critical step size of the Milstein scheme, but above the mASRI critical size. 
If less accuracy is required, the Milstein scheme may be preferable, 
although this is not always the case.

\begin{figure}[!h]
\centering
\includegraphics[width=0.7\textwidth]{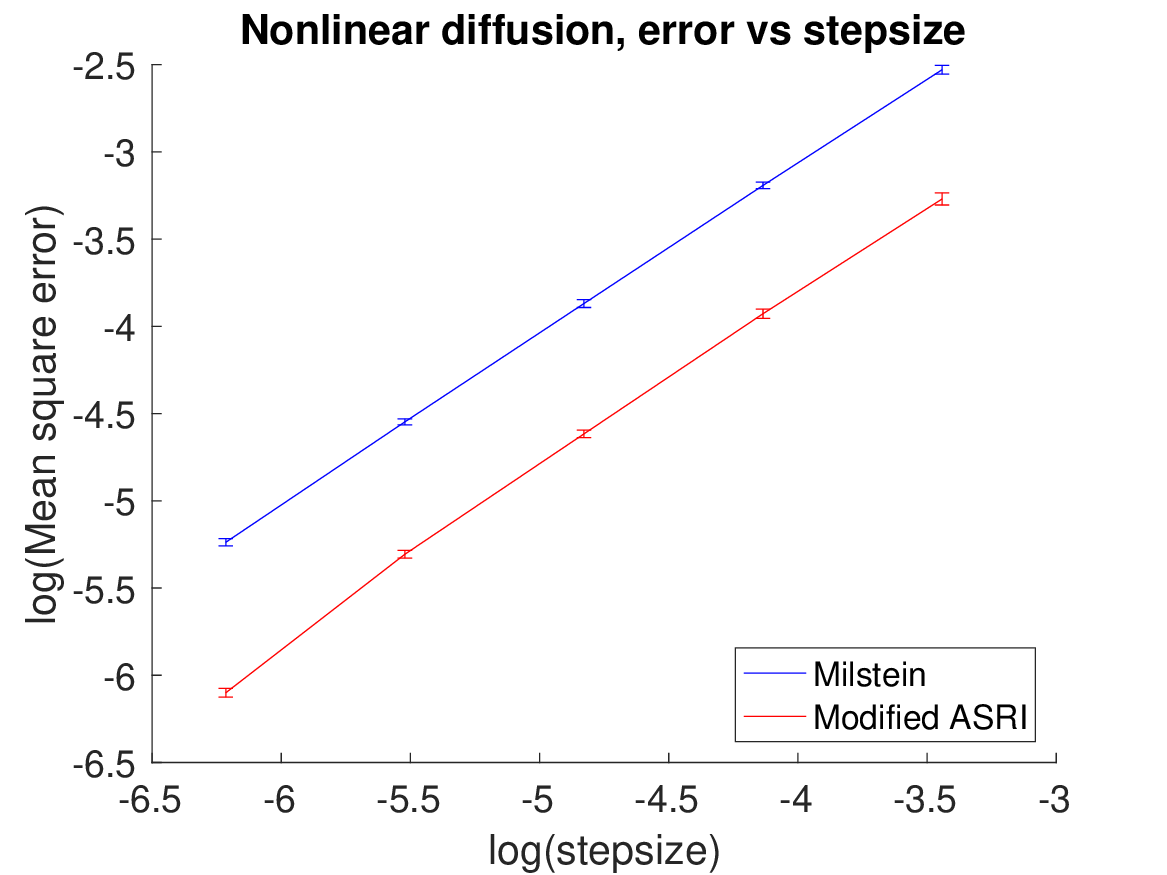}
\end{figure}

\begin{figure}[!h]
\centering
\includegraphics[width=0.7\textwidth]{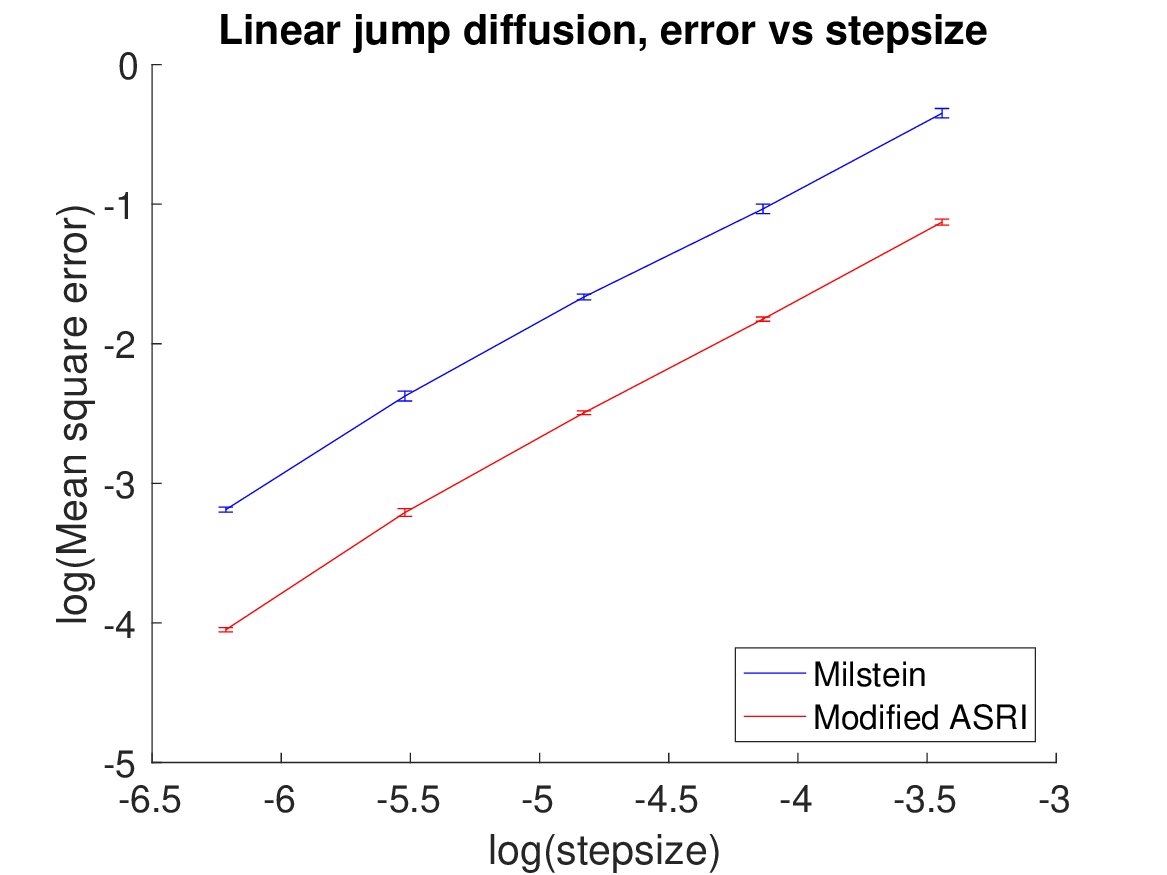}
\end{figure}

\begin{figure}[!h]
\centering
\includegraphics[width=0.7\textwidth]{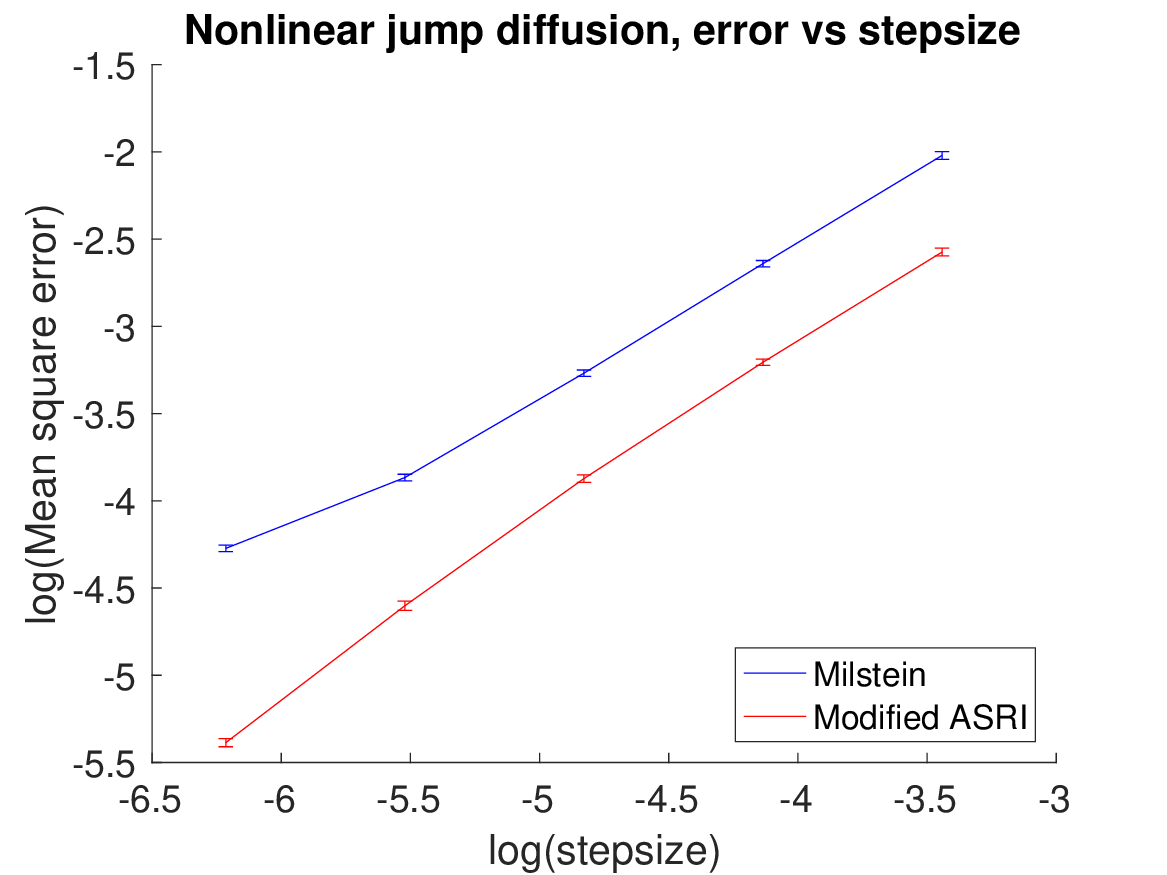}
\end{figure}

\begin{figure}[ht]\label{fig:1}
\includegraphics[width=0.48\textwidth]{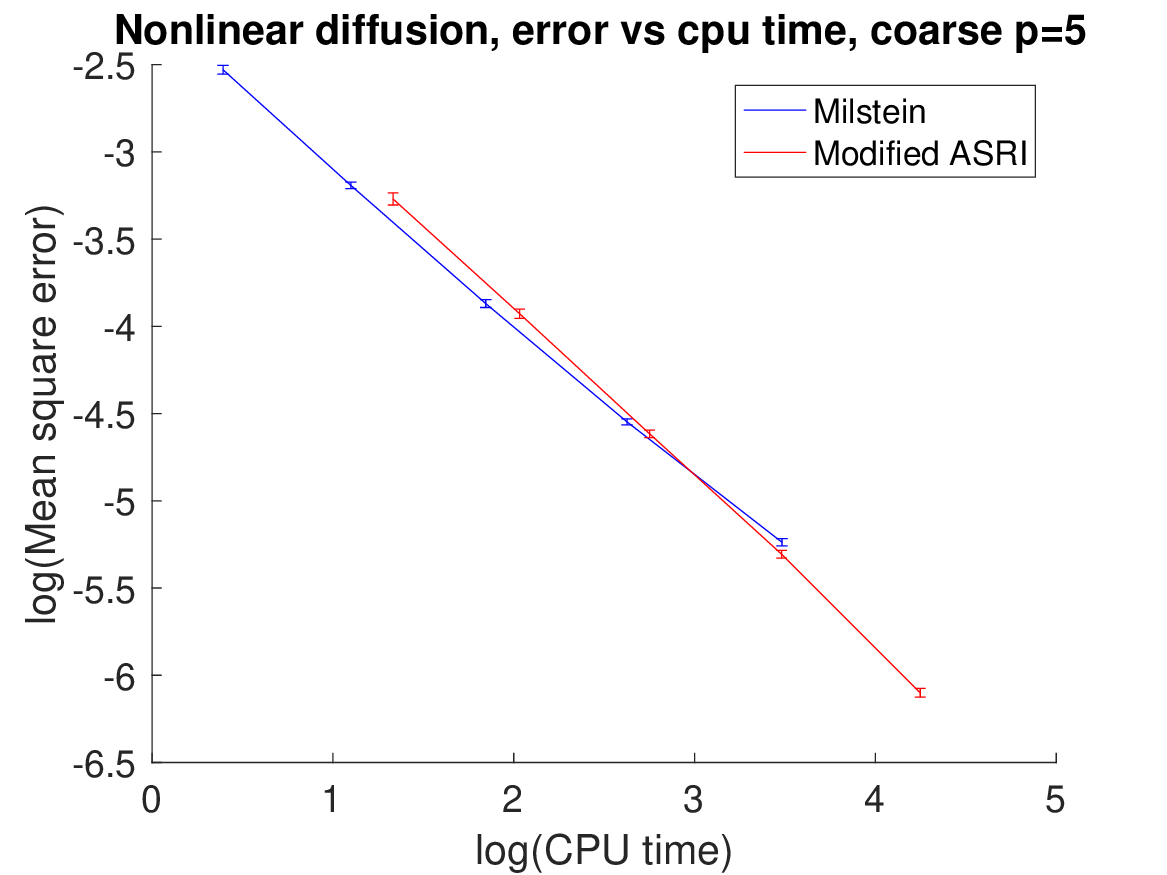}
\includegraphics[width=0.48\textwidth]{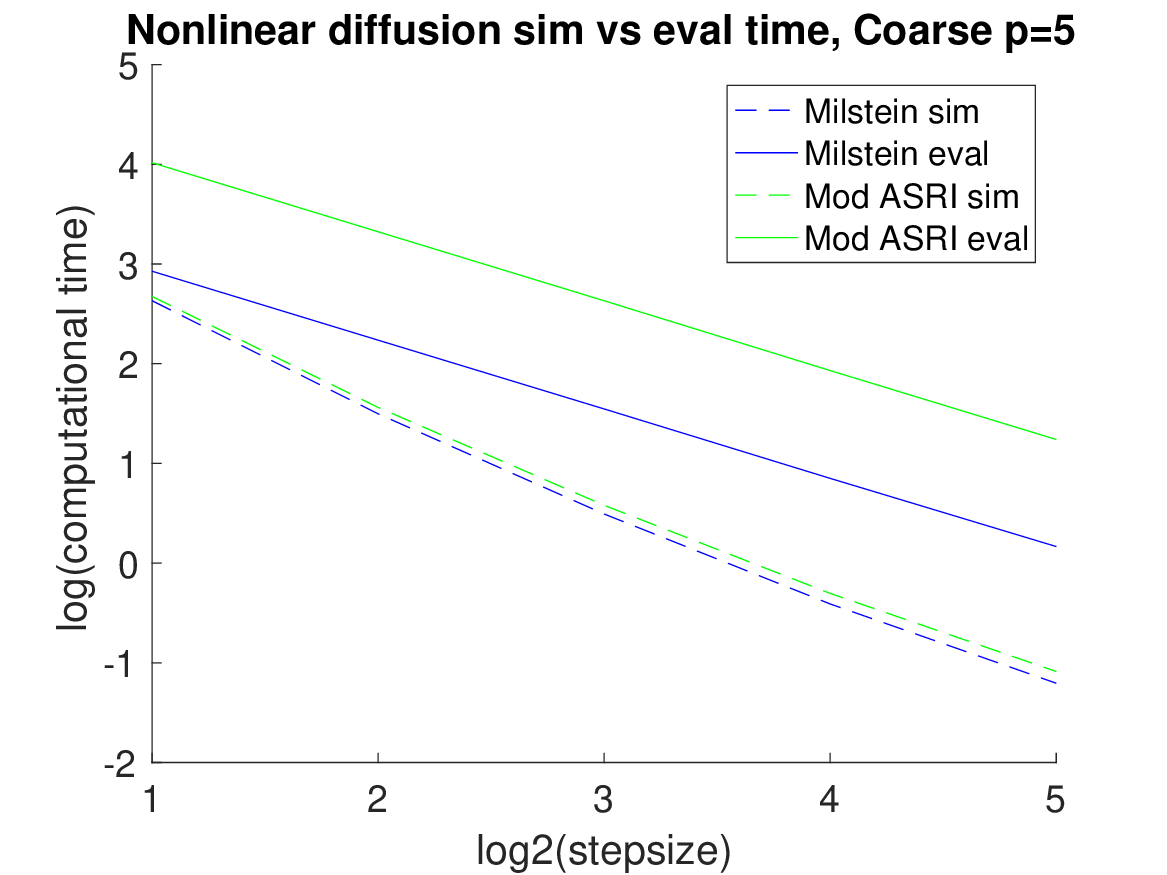}
\end{figure}

\begin{figure}[ht]\label{fig:2}
\includegraphics[width=0.48\textwidth]{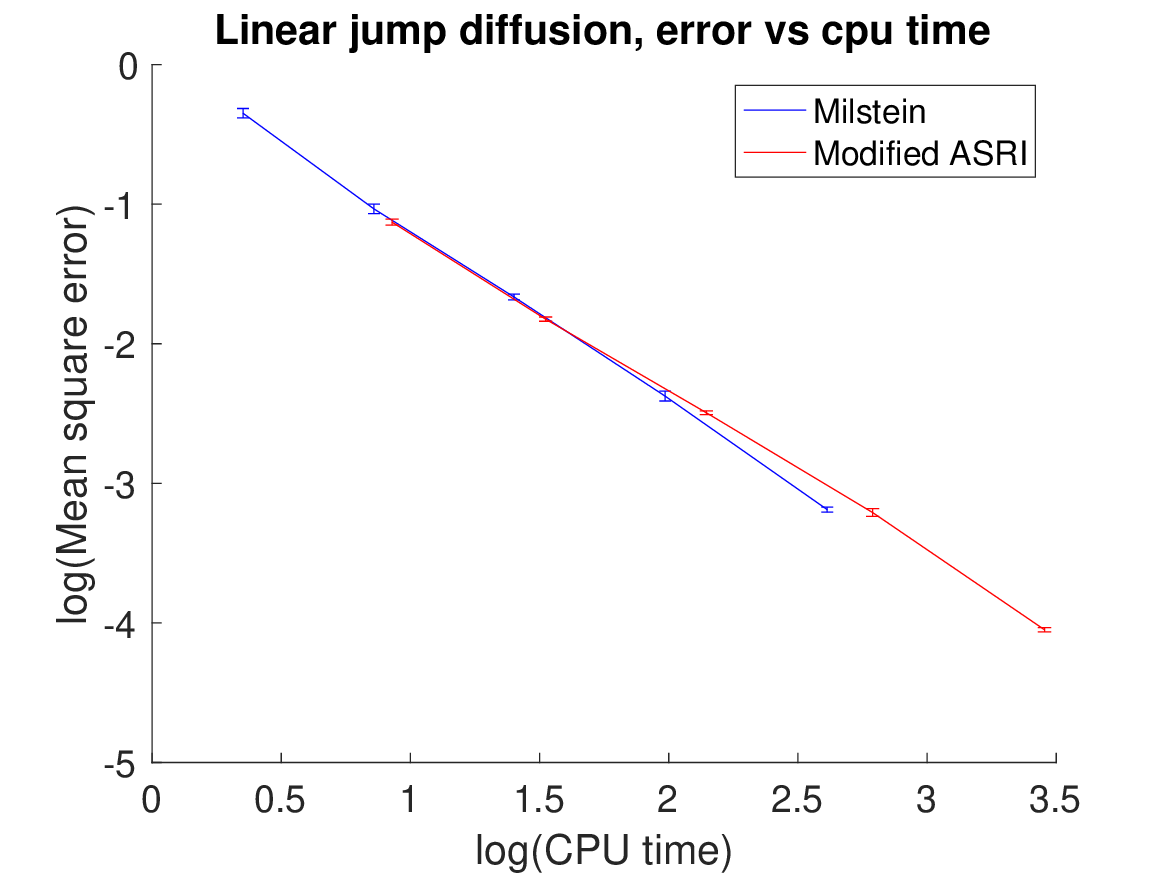}
\includegraphics[width=0.48\textwidth]{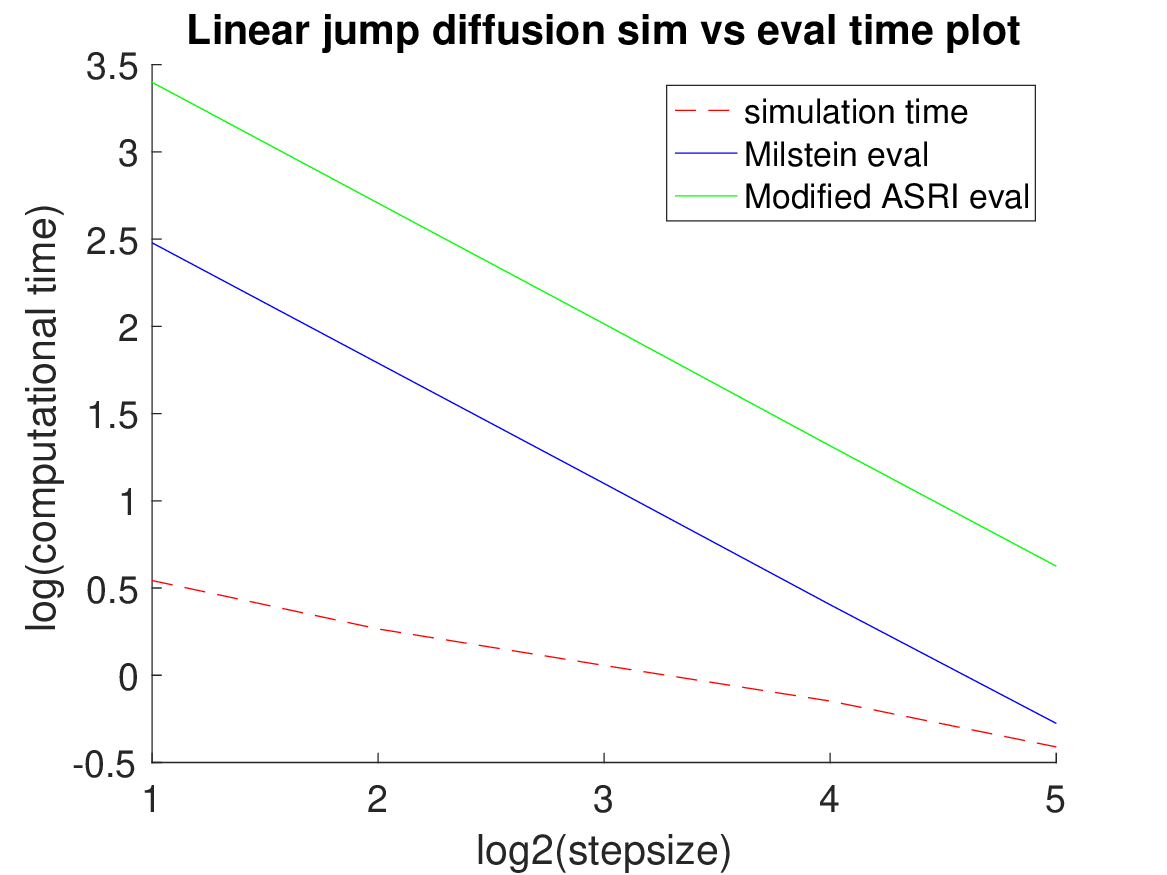}
\end{figure}

\begin{figure}[ht]\label{fig:3}
\includegraphics[width=0.48\textwidth]{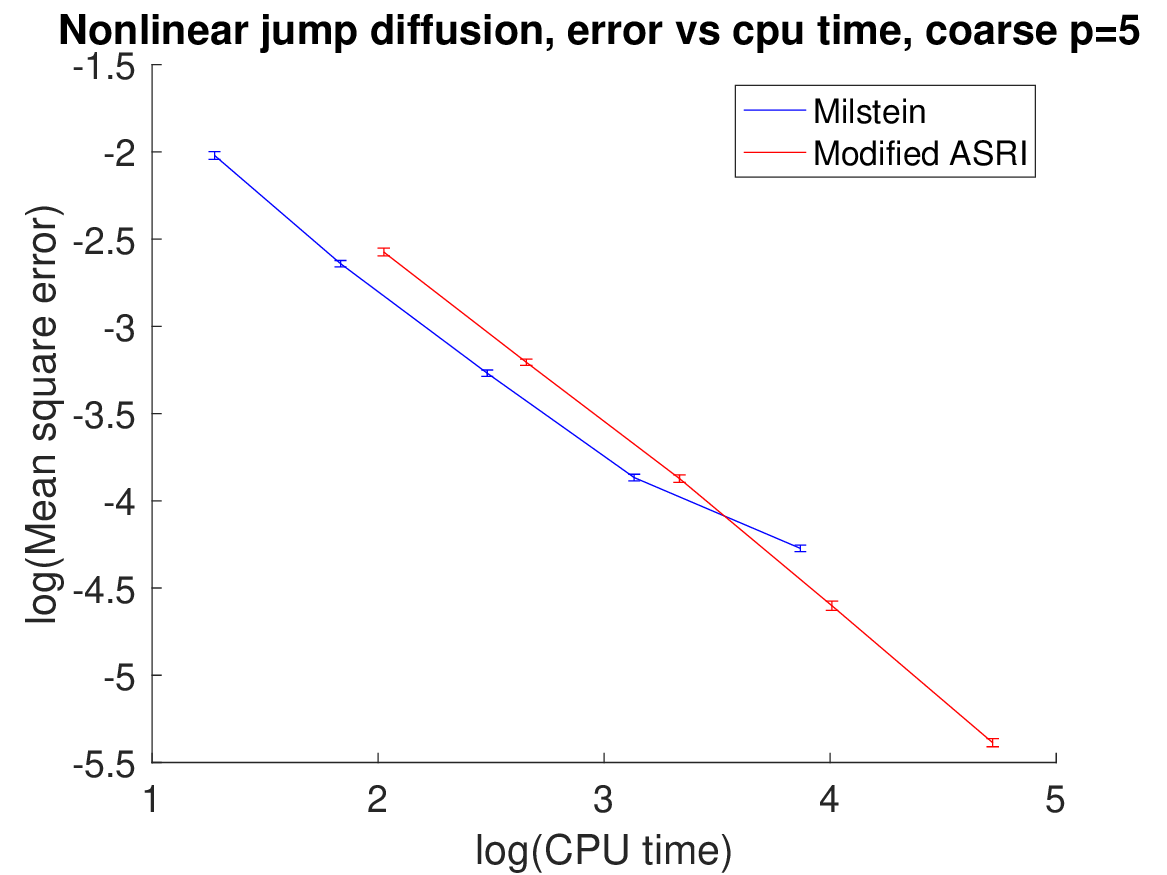}
\includegraphics[width=0.48\textwidth]{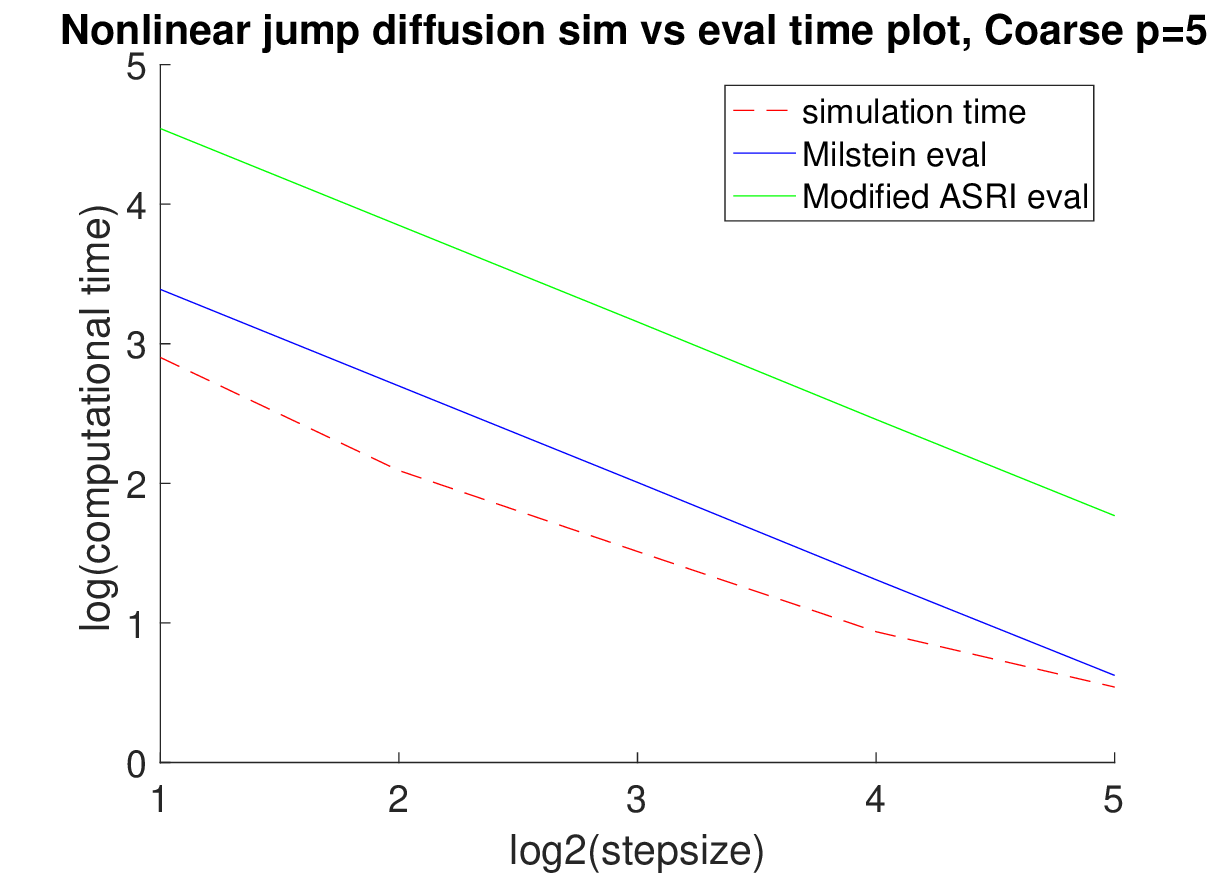}
\end{figure}

\begin{figure}[ht]\label{fig:4}
\includegraphics[width=0.48\textwidth]{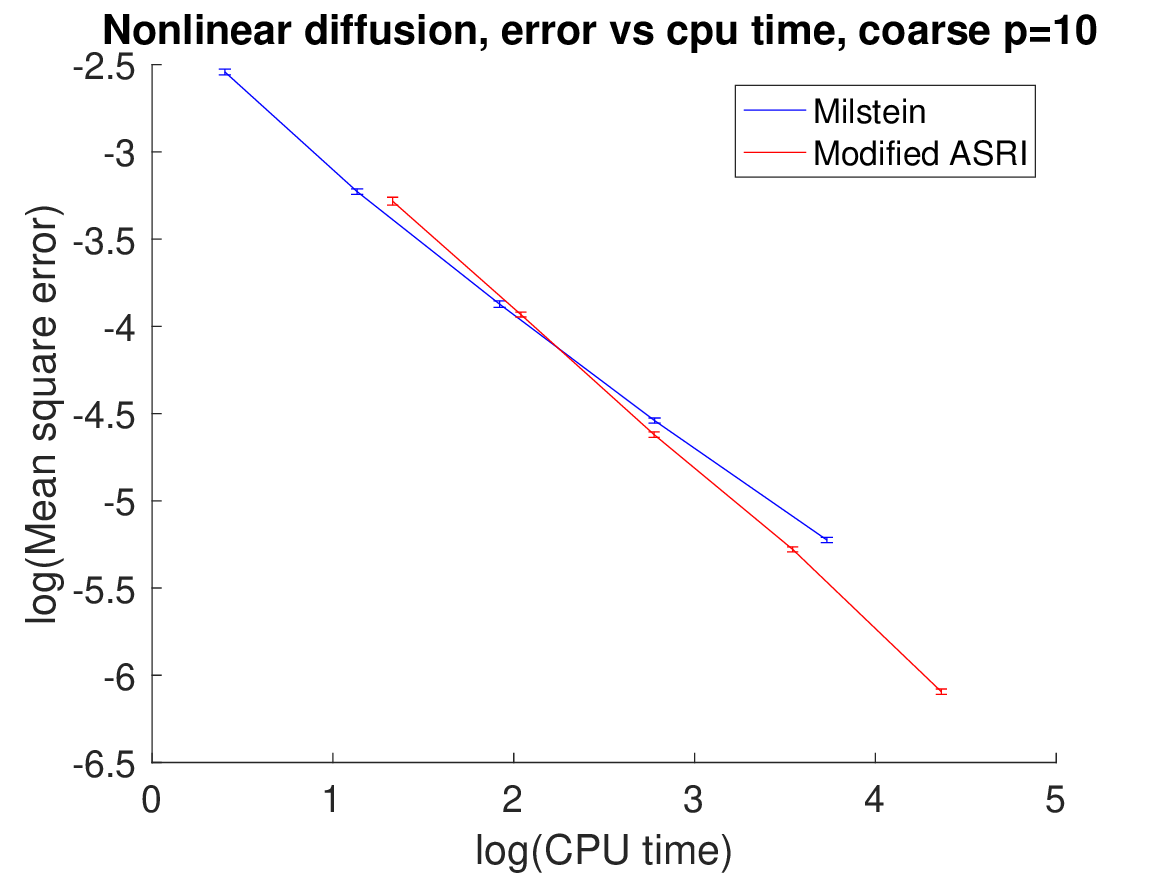}
\includegraphics[width=0.48\textwidth]{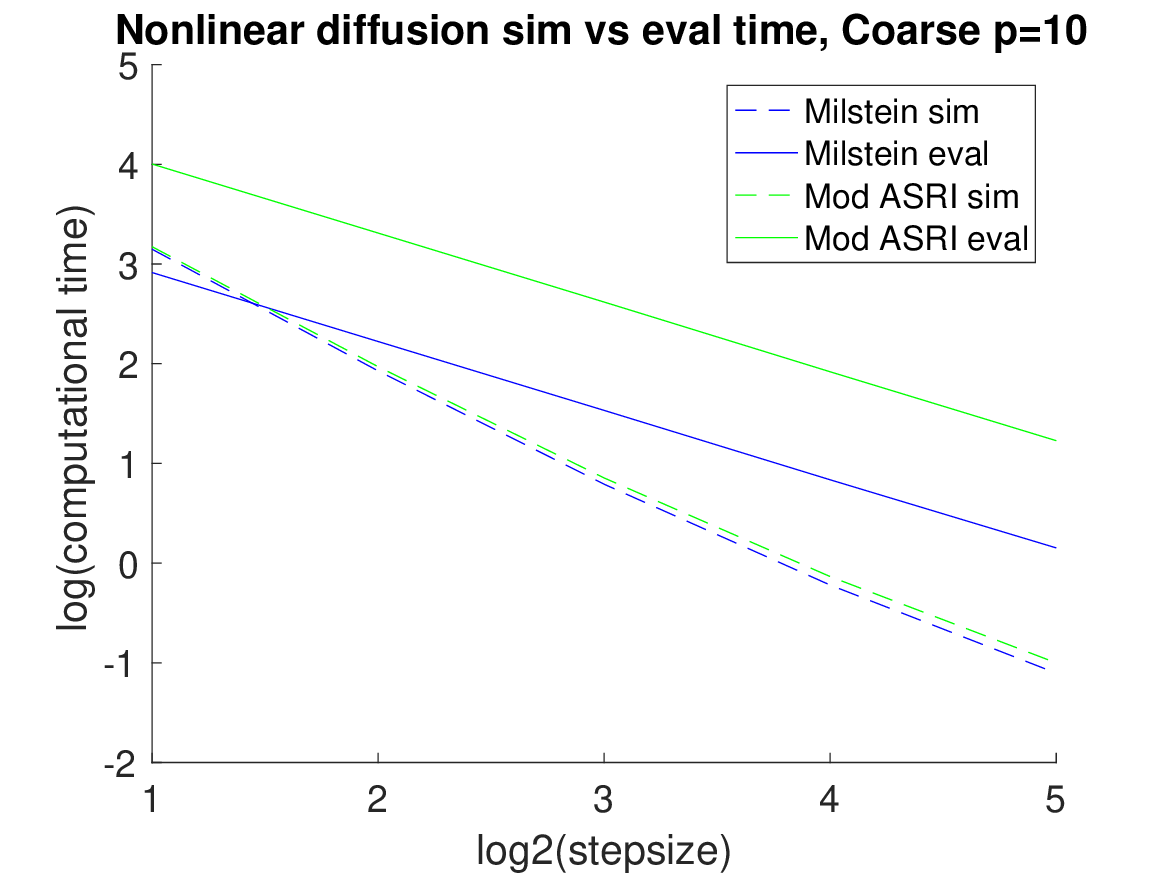}
\end{figure}

\begin{figure}[ht]\label{fig:5}
\includegraphics[width=0.48\textwidth]{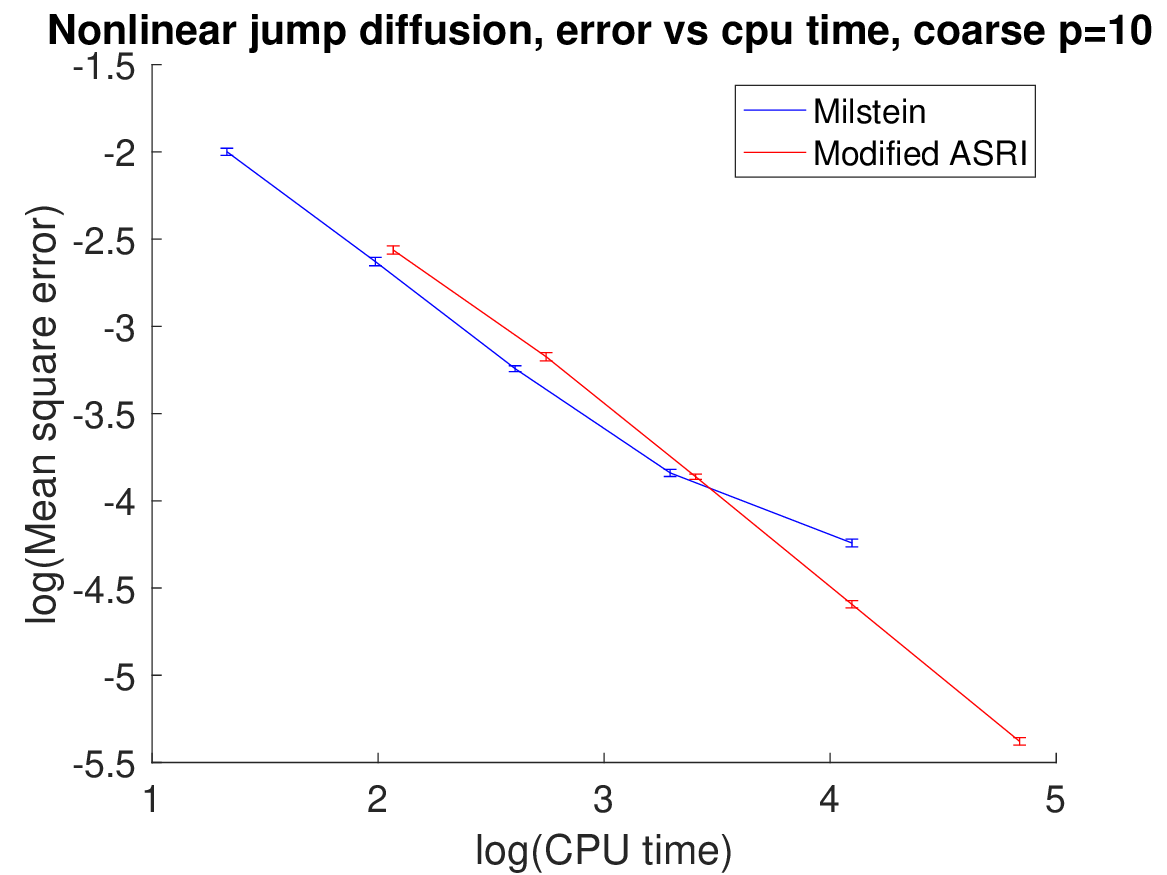}
\includegraphics[width=0.48\textwidth]{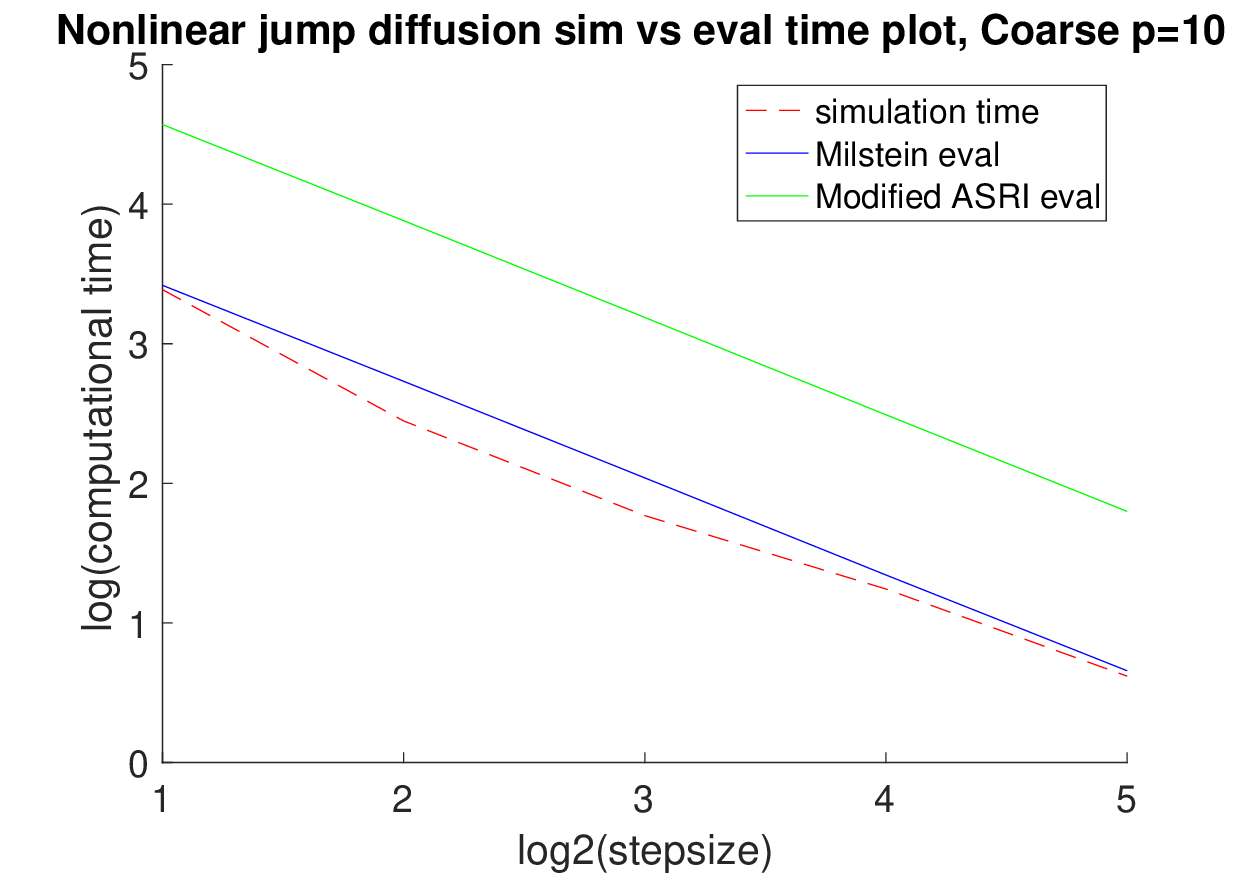}
\end{figure}

\begin{figure}[ht]\label{fig:6}
\includegraphics[width=0.48\textwidth]{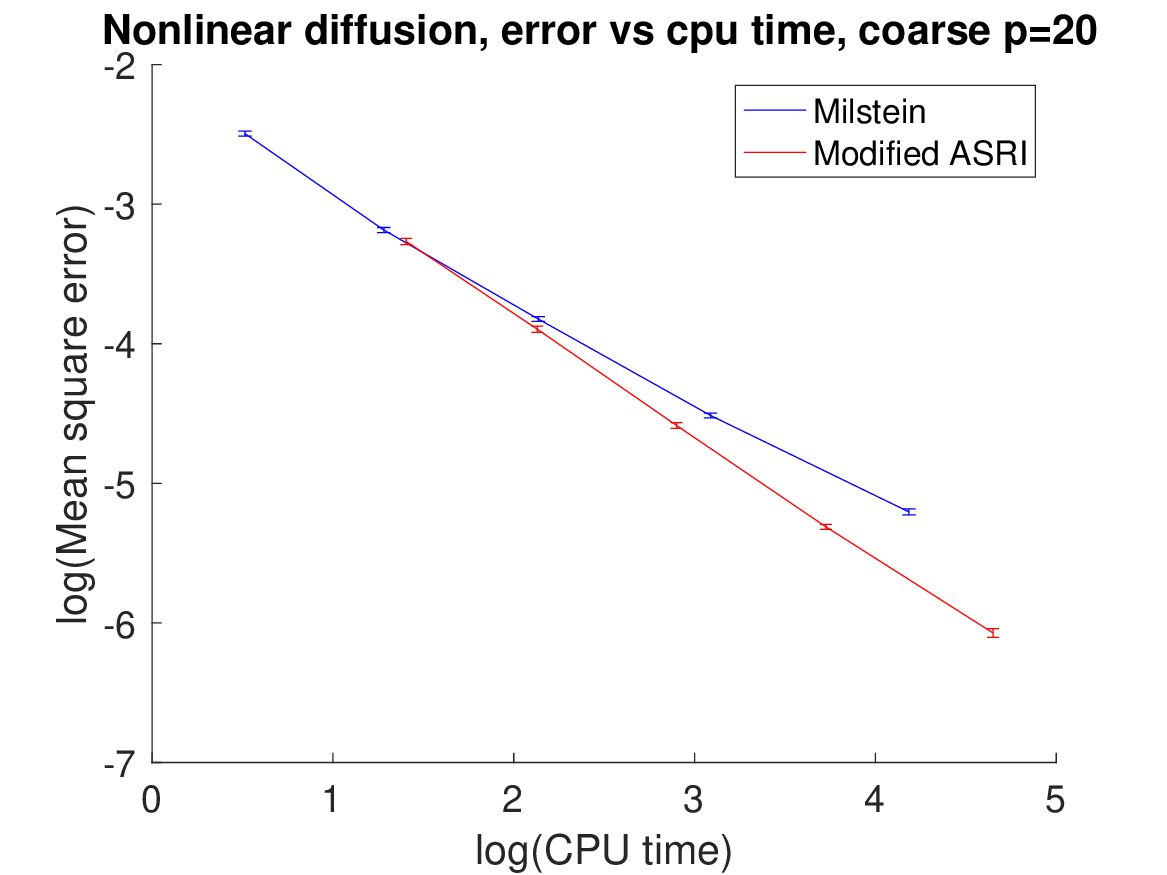}
\includegraphics[width=0.48\textwidth]{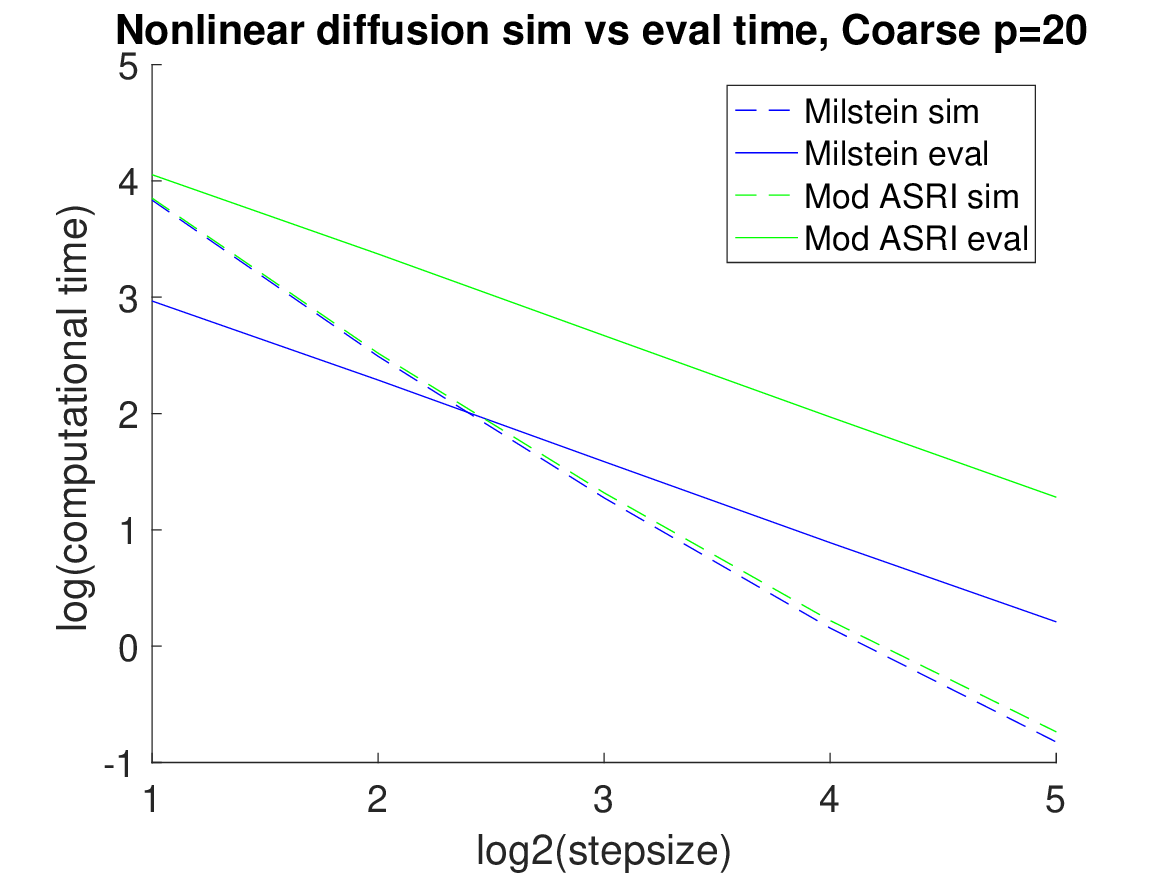}
\end{figure}

\begin{figure}[ht]\label{fig:7}
\includegraphics[width=0.48\textwidth]{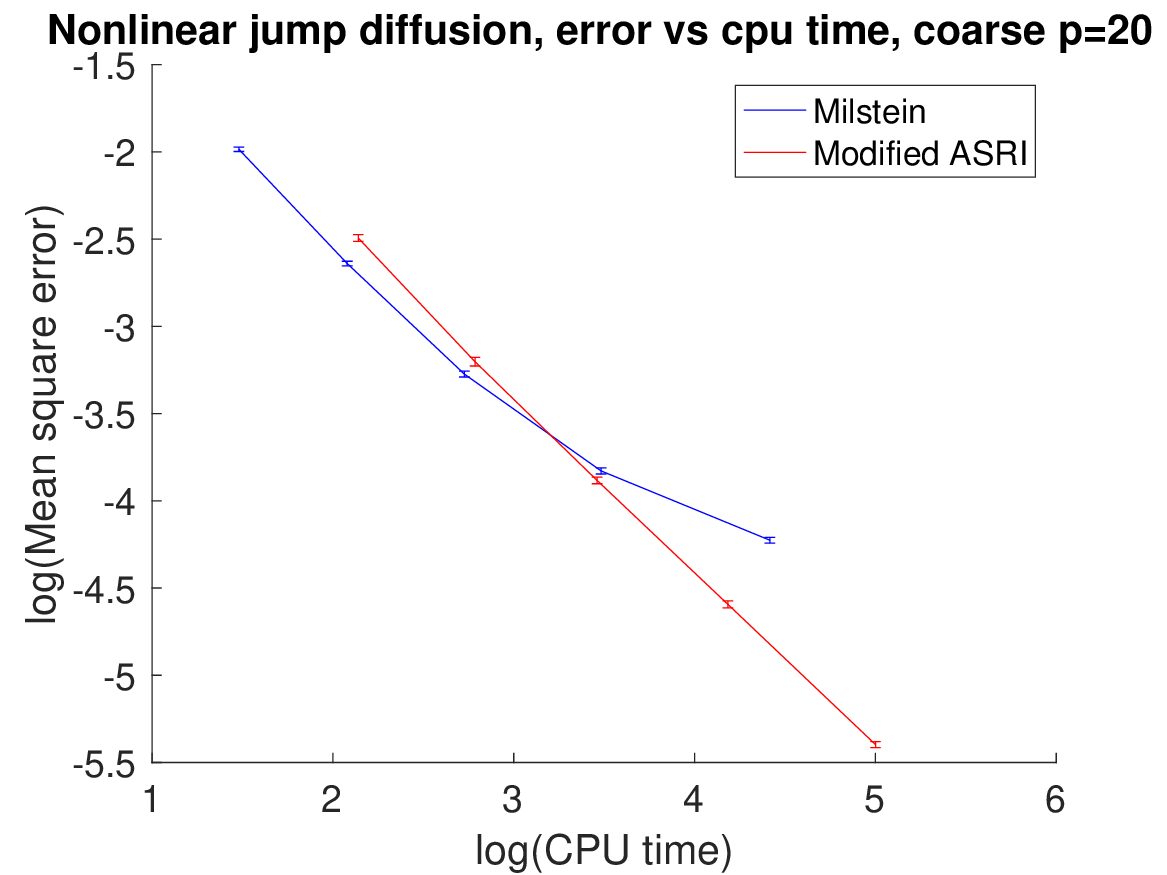}
\includegraphics[width=0.48\textwidth]{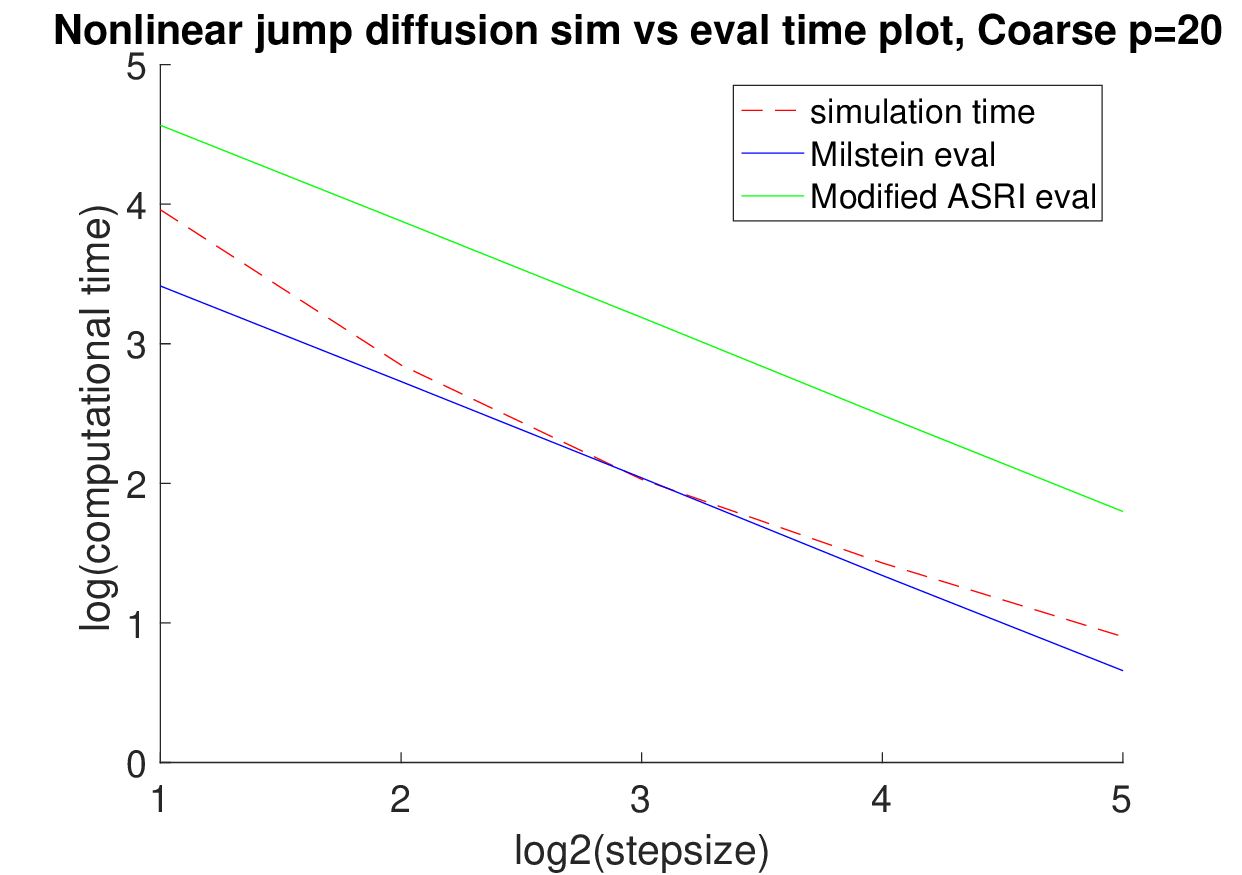}
\end{figure}

\end{document}